\documentclass[11pt,a4paper,oneside]{article}

\usepackage[T1]{fontenc}
\usepackage[utf8]{inputenc}
\usepackage{lmodern}
\usepackage[
  a4paper,
  margin=25mm,
  footskip=12mm
]{geometry}
\usepackage{microtype}
\usepackage{indentfirst}

\usepackage{amsmath,amssymb,amsthm}
\numberwithin{equation}{section}
\DeclareMathOperator{\Gr}{Gr}
\DeclareMathOperator{\Sp}{Sp}
\DeclareMathOperator{\GL}{GL}

\theoremstyle{plain}
\newtheorem{theorem}{Theorem}[section]
\newtheorem{lemma}[theorem]{Lemma}
\newtheorem{proposition}[theorem]{Proposition}
\newtheorem{corollary}[theorem]{Corollary}
\theoremstyle{definition}
\newtheorem{definition}[theorem]{Definition}
\theoremstyle{remark}
\newtheorem{remark}[theorem]{Remark}
\theoremstyle{plain}
\newtheorem*{fact}{Fact}

\usepackage{needspace}
\AddToHook{env/theorem/before}{\Needspace{6\baselineskip}}
\AddToHook{env/lemma/before}{\Needspace{5\baselineskip}}
\AddToHook{env/proposition/before}{\Needspace{5\baselineskip}}
\AddToHook{env/corollary/before}{\Needspace{5\baselineskip}}
\AddToHook{env/definition/before}{\Needspace{4\baselineskip}}

\usepackage{graphicx}
\graphicspath{{figures/}}
\usepackage[font=small,labelfont=bf,skip=7pt]{caption}
\usepackage{booktabs,longtable,array}
\usepackage{siunitx}
\usepackage{enumitem}
\setlist[enumerate]{label=(\arabic*),leftmargin=*,itemsep=0.25\baselineskip}

\usepackage{titling}

\pretitle{\centering\Large\bfseries}
\posttitle{\par\vskip0.6em}

\preauthor{\centering\normalsize\lineskip0.5em}
\postauthor{\par\vskip0.5em}

\predate{}
\postdate{}
\date{}

\usepackage[hidelinks,unicode]{hyperref}
\usepackage{bookmark}
\hypersetup{
  pdftitle={Linear Stability of Double-Double Orbits in the Equal-Mass Four-Body Problem via index theory},
  pdfauthor={Zijian Meng; Yuwei Ou},
  pdfkeywords={Maslov-type Index Theory, Four-Body Problem, Linear Stability, variational characterization},
  bookmarksnumbered=true,
  bookmarksopen=true,
  bookmarksopenlevel=1,
  pdfdisplaydoctitle=true,
  pdfstartview=FitH
}

\title{Linear Stability of Double-Double Orbits in the Equal-Mass Four-Body Problem via Index Theory}
\author{Zijian Meng\thanks{email:202311792@mail.sdu.edu.cn}\\[0.7em]
  {\small Department of Mathematics, Shandong University Jinan, Shandong 250100}\\[0.2em]
  {\small The People's Republic of China}}
\date{}
\begin{document}
\maketitle
\begin{abstract}
In this paper, using the Maslov-type index theory in symplectic geometry, we study the linear stability of the double-double orbits in the planar equal-mass four body problem. By analyzing its symmetry properties and variational characterization, we clarify the relationship between linear stability and its variational nature on various loop spaces and show that for any rotation angle $\theta=\frac{\pi}{n}$, the retrograde double-double orbits are linearly stable with integer $n\in[3,30]$ and the prograde double-double orbits are linearly stable with integer $n\in[4,30]$. We also give the distributions of the eigenvalues on the unit circle of the monodromy matrices associated with the orbits. The basic idea is as follows: the variational characterization of the orbits provides information about its Morse index; based on its relation to the Maslov-type index, our stability criteria come into play.
\end{abstract}

\noindent\textbf{Keywords:} Maslov-type Index Theory, Four-Body Problem, Linear Stability, variational characterization.
\begingroup
\small
\tableofcontents
\endgroup
\medskip
\section{Introdution and Main Results}\label{sec:introduction}
The planar equal-mass four-body problem studies the motion of four equal masses \((m_{1} = m_{2} = m_{3} =m_4= 1)\) under the Newtonian gravitational law. Let \(x_{i} \in \mathbb{R}^{2}\) \((i = 1,2,3,4)\) be the position of each body. The equations of motion are
\begin{equation}
\ddot{x}_{i} = \frac{\partial U}{\partial x_{i}} (x_{1},x_{2},x_{3},x_4),\quad x_{i}\in \mathbb{R}^{2},\quad i = 1,2,3,4, \label{28}
\end{equation}
where
\[
U = \sum_{1\leq i< j\leq 4}\frac{1}{|x_{i} - x_{j}|}
\]
is the potential energy and \(|\cdot |\) denotes the Euclidean norm in \(\mathbb{R}^{2}\). Define the configuration space to be \(\widehat{\mathcal{X}} = \mathcal{X} \backslash \Delta\), where
\[
\mathcal{X} \equiv \left\{x(t) = (x_{1}^{\top}(t),x_{2}^{\top}(t),x_{3}^{\top}(t),x_{4}^{\top}(t))^{\top} = \begin{pmatrix} x_{1}(t) \\ x_{2}(t) \\ x_{3}(t)\\x_{4}(t) \end{pmatrix}\in (\mathbb{R}^{2})^{4} \;\middle|\; x_{1} + x_{2} + x_{3}+x_{4} = 0\right\},
\]
and \(\Delta\) is the collision set
\[
\Delta \equiv \bigcup \{x\in \mathcal{X} \mid x_{i} = x_{j}\text{ for some }i\neq j\}.
\]
Let \(W^{1,2}(\mathbb{R} / T\mathbb{Z},\widehat{\mathcal{X}})\) be the loop space for some fixed period \(T\). The action functional of the system \eqref{28} is
\begin{align}
    \mathcal{A}:W^{1,2}(\mathbb{R} / T\mathbb{Z},\widehat{\mathcal{X}})&\to \mathbb{R},\notag\\
x&\mapsto \mathcal{A}(x) = \int_0^T L(x,\dot{x})\,\mathrm{d}t,
\label{29}
\end{align}
where
\[
L(x,\dot{x}) = \frac{1}{2} |\dot{x} (t)|^2 +U(x(t))
\]
is the Lagrangian. Set
\[
z(t) = \bigl(\dot{x}^{\top}(t),x^{\top}(t)\bigr)^{\top} \equiv \bigl(y(t)^{\top},x(t)^{\top}\bigr)^{\top}.
\]
The Hamiltonian system corresponding to the Newtonian system \eqref{28} is
\begin{align}
\dot{z} (t) &= JH'(z(t)),\label{eq:1-3}\\
z(0)&=z(T),
\label{eq:1-4}
\end{align}
where
\[
J = \begin{bmatrix} 0 & - I_n\\ I_n & 0 \end{bmatrix},
\]
$I_n$ is the \(n\times n\) identity matrix, and the Hamiltonian energy
\[
H(z) = \frac{1}{2} |y(t)|^2 - U(x(t))
\]
is defined on the phase space \(T^{*}(\widehat{\mathcal{X}})\).

The idea of minimizing the action functional in the \(N\)-body problem started by Poincaré. In 1896, Poincaré searched for periodic orbits with a given period \(T\) of the \(N\)-body problem as action minimizers. He found that it is hard to eliminate the possible existence of collisions due to the difficulty of the Newtonian potential: the action of a solution can be finite even when some of the bodies collide. Until recently in 2000, Chenciner and Montgomery \cite{CCM2000} made a breakthrough by considering minimization of action in some symmetric loop space. They successfully showed the existence of the figure-eight orbit which was numerically discovered by Moore \cite{Moore1993}. The symmetry of the loop plays an important role in avoiding collisions. Inspired by their work, a large number of symmetric periodic solutions were proved to exist by variational method, see, for example, \cite{COX2012,Chen2008,CL2009,FT2004,OX2015} and references therein.

In 2009, Hu, Long and Sun et al. \cite{HS2009,HS2010,HLS2014} successfully applied the index theory to study the linear stability of periodic orbits in \(N\)-body problem. Especially in \cite{HS2009}, Hu and Sun established a generalized Bott-type iteration formula, and together with the Morse index and Maslov-type index theory, they obtained the linear stability of the figure-eight orbit \cite{CCM2000}, which has a \(D_{6}\) symmetry. Note that the double-double orbits  admits a \(D_{n}\) symmetry \cite{Peng2015}, where $n\geq2$ is integer. The goal of this paper is to understand the linear stability of the double-double orbits from the perspective of index theory. Concretely, we shall attempt to combine the generalized Bott-type iteration formula in \cite{HS2009} with the Morse index and Maslov-type index theory.

The double-double orbits was first found numerically by Vanderbei \cite{Van04} in 2003. In his list, there is an interesting class of orbits in the parallelogram four-body problem with equal-masses, which he called double-double orbits. Actually, he found two sets of double-double orbits: retrograde double-double orbits and prograde double-double orbits. A double-double orbit is called retrograde if one pair of the two adjacent masses revolves around the mass center of the other pair in different direction. A double-double orbit is called prograde if both revolutions follow the same direction. Sample pictures of retrograde and prograde double-double orbits are shown in Figure~\ref{fig:double-double-examples}.
\begin{figure}[htbp]
\centering
\includegraphics[width=0.38\linewidth]{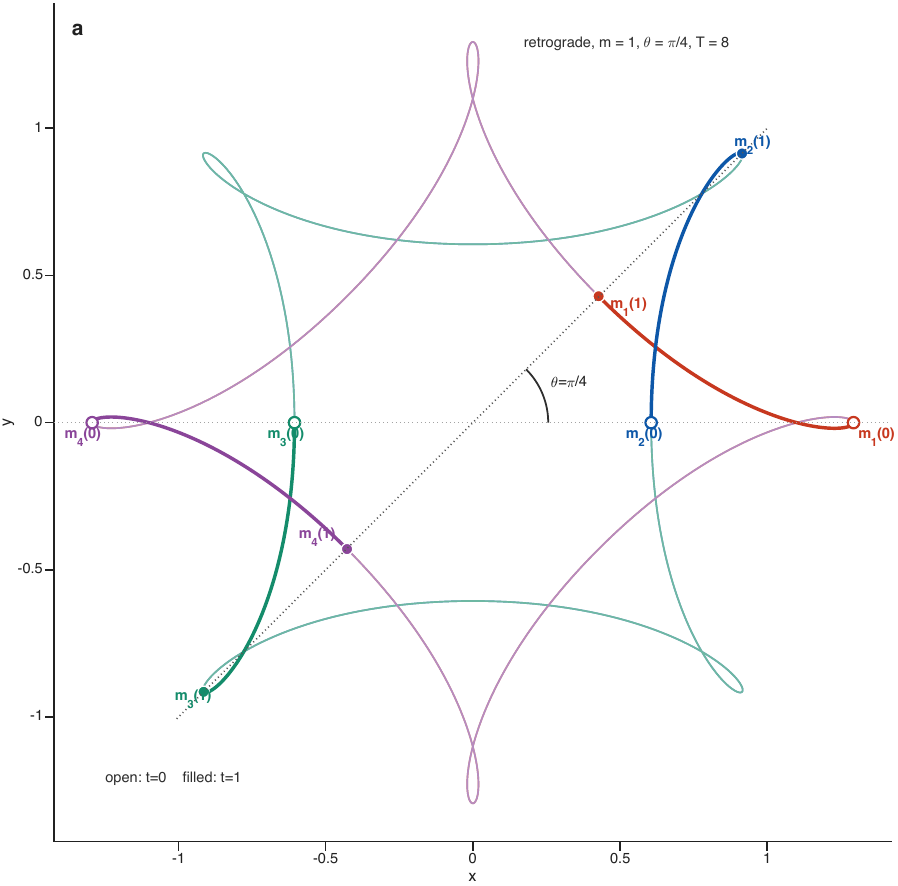}\hspace{0.06\linewidth}
\includegraphics[width=0.38\linewidth]{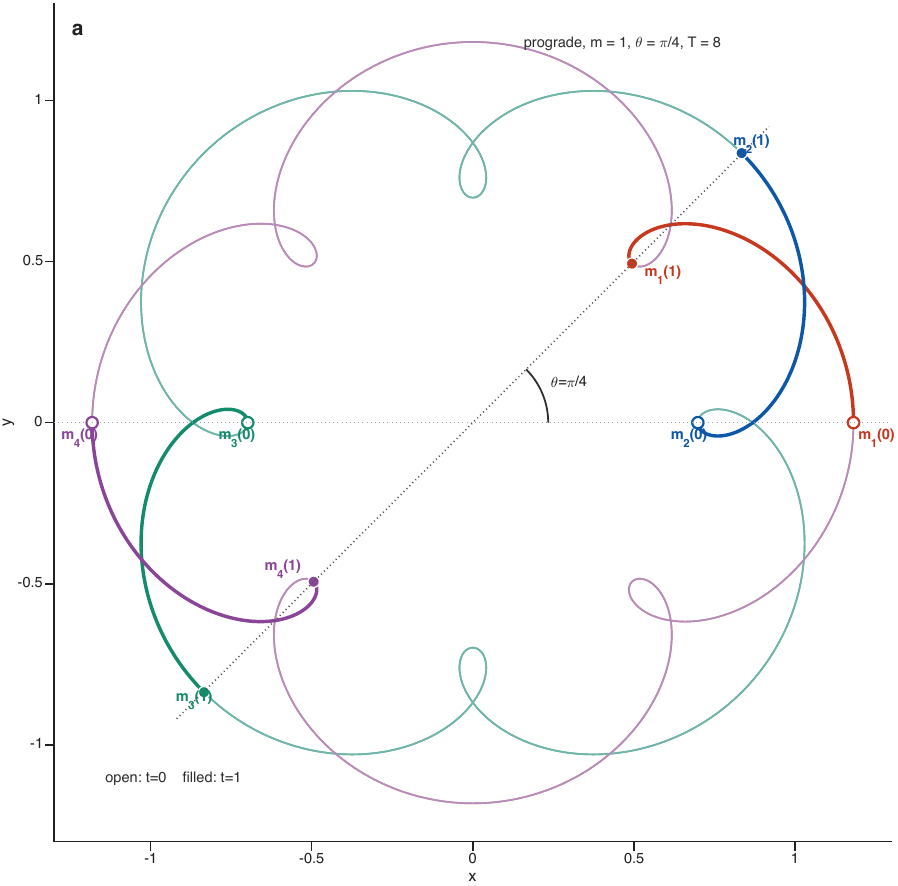}
\caption{Two sets of double-double orbits with equal masses: a retrograde orbit in left, and a prograde orbit in right. In both pictures, $\theta$ is the angle between the collinear configuration at $t=0$ and the collinear configuration at $t=1$, where the paths connecting the two configurations are highlighted. Both pictures are for $\theta=\frac{\pi}{4}$.}
\label{fig:double-double-examples}
\end{figure}

In such an orbit, two pairs of bodies locate on two sides of the origin. In each pair, the two bodies switch orders periodically. For example, as in Figure~\ref{fig:double-double-examples}, at $t=0$, the four bodies line up on the $x$-axis with an order $m_1\xrightarrow{}m_2\xrightarrow{}m_3\xrightarrow{}m_4$ from right to left. At $t=1$, they are on a line which is an anti-clockwise rotation of $\theta$ of the $x$-axis. The order of the bodies becomes $m_2\xrightarrow{}m_1\xrightarrow{}m_4\xrightarrow{}m_3$ from right to left. At $t=2$, they line up for the third time and the order becomes $m_1\xrightarrow{}m_2\xrightarrow{}m_3\xrightarrow{}m_4$ from right to left, which is the same as it is at $t=0$.

The mathematical existance of the double-double orbits has also been studied, which can be traced back to the variational studies of Chen. In the equal-mass parallelogram four-body problem, Chen \cite{Chen2001} first constructed a collision-free periodic orbit as an action minimizer on a suitable symmetric path space. He later proved the existence of a nontrivial double choreographic solution with three-fold rotational symmetry using binary decomposition and action estimates \cite{ChenBin}. In a subsequent work, Chen introduced a one-parameter family of symmetric variational problems characterized by \(d\)-fold rotational symmetry and proved the existence of classical minimizers for \(d\ge d_0\), where \(d_0\le3.55\). Consequently, all integer cases \(d\ge4\) are covered by a unified variational argument. The cases \(d=2\) and \(d=3\) were verified separately in that work by numerical action estimates, although independent variational existence results for these two special cases were already available from his earlier papers. These results provide the variational existence foundation for the retrograde double–double orbits subsequently studied by Yan et al \cite{Peng2015}.

 Later, Ferrario and Terracini \cite{FT2004} introduced a general variational argument and the existence of many interesting periodic orbits in the $N$-body problem. The equal-mass retrograde double-double orbits can be seen as one of their many applications. In 2018, Kuang and Yan \cite{KY2018}
show the existence of a set of prograde double-double orbits for any rotation angle $\theta \in(0,\frac{\pi}{4}]$ in the equal-mass four-body problem.

The linear stability of some of the double-double orbits is claimed by Vanderbei first \cite{Van04}. Actually, by testing several double-double orbits on a simulator for a relatively long time, he concluded that they are apparently stable. In 2015, Ouyang, Peng, Xu and Yan \cite{Peng2015} study the linear stability of a two-parameter family $(m,\theta)$ of double-double orbits in the parallelogram four body problem. This family was numerically found and when $\theta=\frac{\pi}{n}$ with integer $n\in[2,20]$, the retrograde double-double orbits with $m=1$ which existence has been proved are linearly stable numerically. When $\theta=\frac{\pi}{n}$ with integer $n\in[4,20]$, the prograde double-double orbits with $m=1$ which existence has been proved are numerically proved to be linearly stable. For the periodic members considered here, Chen’s rotational parameter \(d\) corresponds to the integer \(n\) in the parametrization \(\theta=\pi/n\) used by Yan et al.

Because their proof is computer assisted, in this paper, we try to understand why this linear stability is possible from a variational viewpoint by index theory of the Hamiltonian system. The key property of the double-double orbits is that it has $D_n$ full symmetry \cite{Peng2015}. Let $\widehat{\mathcal{X}}$ be the configuration space of the planar four body problem with center-of-masses fixed, for double-double orbits, we analyze their stability by distinguishing between even and odd of the integer $n$. We denote by \( m_1, m_2, m_3,m_4 \) the Morse indexs of the double-double as the critical point of the action functional in the total loop space \( W^{1,2}(\mathbb{R}/\mathbb{Z}, \widehat{\mathcal{X}}) \) with some \( \mathbb{Z}_2 \)- ,\( \mathbb{Z}_{n/2}\)- and \( \mathbb{Z}_n \)-invariant loop subspaces if $n$ is even and \( m_1,  m_3,m_4 \) the Morse indexs of the double-double as the critical point of the action functional in the total loop space \( W^{1,2}(\mathbb{R}/\mathbb{Z}, \widehat{\mathcal{X}}) \), its $[0,4]$ and \( \mathbb{Z}_n \)-invariant loop subspaces if $n$ is odd. The main results of this paper is as follows.

\begin{theorem}\label{thm:retrograde-even}
For the retrograde double double orbits, when $n\geq4$ is even, assume that
\begin{enumerate}[label=(\arabic*)]
\item $m_1=2n-4,m_2=n-2,m_3=2,m_4=0$;
\item after eliminating the center of mass and the total linear momentum, the multiplicity of the eigenvalue 1 in the monodromy matrix $\gamma(2T)$ of the retrograde double-double orbits is exactly 4;
\item the symplectic Jordan form corresponding to the angular momentum of the retrograde double-double orbits is \(\begin{bmatrix} 1 & 1\\ 0 & 1 \end{bmatrix}\).
\end{enumerate}
Then the retrograde double-double orbits with rotation angle $\theta=\frac{\pi}{n}$ where $n=2n'$ and integer $n'\in[2,15]$ are linearly stable.
\end{theorem}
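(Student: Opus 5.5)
We follow the Hu–Sun scheme of \cite{HS2009}. The ingredients are the relation between Morse index and Maslov-type index, the generalized Bott-type iteration formula for the $D_n$ symmetry, and the stability criterion for symplectic paths. Write $\gamma$ for the fundamental solution of the linearized Hamiltonian system along the orbit over one fundamental domain. By the symmetry of \cite{Peng2015}, the monodromy over the full period factors through a symplectic map $S$ (rotation by $\theta=\pi/n$ composed with the body relabelling). Then $\gamma(2T)$ is conjugate to a power of $S\gamma(\cdot)$, with $S^{n}$ or $S^{2n}$ acting trivially on the reduced phase space.

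\textbf{Step 1: translate Morse indices into $\omega$-indices.} Each invariant loop space (total, $\mathbb{Z}_2$-, $\mathbb{Z}_{n/2}$-, $\mathbb{Z}_n$-invariant) corresponds to a boundary condition for the linearized system. By the Morse index theorem (Long's index theory, in the form used in \cite{HS2009}), the Morse index $m_k$ equals a Maslov-type index $i_{\omega}(\gamma)$, or a sum of such over a subgroup of roots of unity, after correcting for the nullity coming from the symmetry directions. Hypothesis (2) controls this correction: the eigenvalue $1$ has multiplicity $4$ after removing center of mass and linear momentum, coming from energy/time translation and rotation/angular momentum. Hypothesis (3) fixes the normal form of the angular-momentum block as $N_1(1,1)$. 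Together they let us compute $i_1$ and $\nu_1$ exactly, with $\nu_1=4$ and the splitting numbers at $1$ known.

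\textbf{Step 2: decompose by the Bott formula.} The generalized Bott iteration formula splits the index over the full period as
\[
i(\gamma^{\mathrm{full}})=\sum_{\omega^{n}=1} i_{\omega}(S\gamma),
\]
and the same holds for the intermediate symmetry groups. The data $m_1=2n-4$, $m_2=n-2$, $m_3=2$, $m_4=0$ then become linear relations among the $i_{\omega}$ at the $n$-th (or $2n$-th) roots of unity. $m_4=0$ gives $i_1$ on the most symmetric space. $m_3=2$ gives the $\mathbb{Z}_n$ data. Subtracting the $\mathbb{Z}_{n/2}$ and $\mathbb{Z}_2$ counts isolates the indices at the remaining roots. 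The target is to show $i_{\omega}=2$ (or the minimal admissible value) for each nontrivial root $\omega$, and to show these values are forced to be equal.

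\textbf{Step 3: stability criterion, the main obstacle.} Long's splitting-number theory says an index jump of $2$ between $\omega$ and $\bar\omega$ along the unit circle is produced by eigenvalues on $\mathbf{U}$. The basic normal form $R(\theta)$ contributes jumps, while hyperbolic blocks and nontrivial Jordan blocks contribute none or are excluded by the nullity count. The reduced phase space has dimension $12$, and $8$ dimensions remain after the $4$-dimensional eigenvalue-$1$ part. We must show the total jump across the circle equals the maximal value $8/2\cdot 2$, so that all remaining eigenvalues lie on $\mathbf{U}$ and are semisimple. Combined with the normal form at $1$ from (3), this gives linear stability, meaning spectral stability plus semisimplicity off the eigenvalue $1$.

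The delicate point is the bookkeeping: showing the index values at the distinct roots $e^{2\pi i k/n}$ force enough distinct jumps. This needs $n$ large enough that the roots separate the jumps and that the ordering of rotation angles is consistent. The restriction $n'\in[2,15]$ presumably enters here, where hypotheses (1)–(3) are known to hold. I would first do the case $n=4$ explicitly and then induct on the pattern of the $i_{\omega}$.
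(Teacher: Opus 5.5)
There is a genuine gap. The mechanism in your Steps 2--3 is the one that works for the prograde orbits (Proposition~\ref{prop:common-stability}(2), where $m_4=2$, $m_3=4$), and it fails here. Collect the data at $\omega=-1$:
$\mu(1)=m_4=0$ and $\mu(-1)=m_3-m_4=2$; $S_M^+(1)=2$ from the two $N_1(1,1)$ blocks (hypothesis (3)); $S_M^-(-1)=0$ (hypothesis (2)); and $D_{-1}(\hat S_f^{\top})=0$ (Lemma~\ref{lem:common-splitting}). The splitting-number formula at $\omega=-1$ then forces $\sum_{\omega_0}\bigl(S_{M_2}^+(\omega_0)-S_{M_2}^-(\omega_0)\bigr)=0$ over the open upper semicircle.

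So the total jump is zero, not maximal. Any elliptic eigenvalues of $M_2\in\Sp(4)$ must carry opposite Krein signs. From the data at $\pm1$ you only learn that $M_2$ is spectrally stable or hyperbolic, since one simple pair would give $\pm1$. Your stated target is also backwards. The ``minimal'' values you aim for are $\mu(e^{2\pi\sqrt{-1}/n})=4$ and $\mu(e^{2\pi j\sqrt{-1}/n})=2$ for $2\le j<n/2$. Here the $4$ includes the shift $-D_{\omega}(\hat S_f^{\top})=2$ from the boundary rotation, which your bookkeeping omits. These values are exactly what a hyperbolic $M_2$ produces, so establishing them would prove nothing.

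The missing idea is a parity count. The Bott formula with $m_1=2n-4$ and $m_3=2$ gives $\sum_{j=1}^{(n-2)/2}\mu(e^{2\pi j\sqrt{-1}/n})=\tfrac12(m_1+4-m_3)=n-1$, which is odd. The baseline values sum to $4+2\cdot\frac{n-4}{2}=n$, which is even. A hyperbolic $M_2$ gives the baseline, and so does a single double pair on $\mathbf{U}$, whose net splitting is zero. Hence $M_2$ has two distinct simple pairs on $\mathbf{U}$, with splitting numbers $(0,1)$ and $(1,0)$, separated by exactly one sampled root. Simplicity then gives diagonalizability.

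This argument is uniform in even $n\ge4$. No induction from $n=4$ and no large-$n$ separation is involved; the range $n'\in[2,15]$ is just where the hypotheses were checked numerically. Also $m_2$ is not needed for stability, only for the parity of $j$ in Corollary~\ref{cor:retrograde-normal-form}. The argument does use that the essential block is $4\times4$, i.e.\ that one works in the $8$-dimensional parallelogram phase space. With your $8$-dimensional remainder, the same data would not exclude an extra hyperbolic block.

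Step 1 also misattributes its ingredients. The $+4$ in $\mu(\Delta,\Gr(\gamma(t)),t\in[0,T])=m_1+4$ is $\nu_1(S)=\dim\ker(I_4-I_4)$ from Theorem~\ref{thm:morse_maslov}. It is a property of the boundary matrix, not of hypothesis (2). On the symmetric subspaces it vanishes because $\hat R_f^{\top}$, $(\hat R_f^{\top})^2$ and $(\hat R_f^{\top})^{n/2}=-I_4$ have no eigenvalue $1$. The nullity of $M$ at $1$ is $2$, not $4$. Hypothesis (2) is what keeps the eigenvalues of $M_2$ off $\pm1$ and off the sampled roots $e^{2\pi j\sqrt{-1}/n}$.
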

For the retrograde double double orbit with $n=2$, assume that $m_1=m_3=2,m_2=m_4=0$ together with $(2)$ and $(3)$ of Theorem~\ref{thm:retrograde-even}, the case is rather special and we can refer to Remark~\ref{rem:retrograde-n2}.
\begin{theorem}\label{thm:retrograde-odd}
 For the retrograde double double orbits, when $n\geq3$ is odd, assume that
\begin{enumerate}[label=(\arabic*)]
\item $m_1=2n-4,m_3=2,m_4=0$;
\item after eliminating the center of mass and the total linear momentum, the multiplicity of the eigenvalue 1 in the monodromy matrix $\gamma(2T)$ of the retrograde double-double orbits is exactly 4;
\item the symplectic Jordan form corresponding to the angular momentum of the retrograde double-double orbits is \(\begin{bmatrix} 1 & 1\\ 0 & 1 \end{bmatrix}\).
\end{enumerate}
Then the retrograde double-double orbits with rotation angle $\theta=\frac{\pi}{n}$ where $n=2n'+1$ and integer $n'\in[1,14]$ are linearly stable.
\end{theorem}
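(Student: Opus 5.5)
We follow the scheme of Hu--Sun for the figure-eight orbit: the generalized Bott-type iteration formula of \cite{HS2009}, combined with the Morse index/Maslov-type index correspondence, converts the assumed Morse indices into $\omega$-indices of the fundamental piece of the orbit. We then show that these indices leave no room for eigenvalues of $\gamma(2T)$ off the unit circle.

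\textbf{Step 1 (Bott decomposition).} Let $n=2n'+1$ be odd. The $D_n$ symmetry is realized by an orthogonal symplectic matrix $S$ with $S^n=I$, together with a reflection $R$ acting as a time reversal. On a fundamental domain $[0,T_0]$ the orbit yields a symplectic path $\gamma_0$ with $\gamma(2T)=(S\gamma_0(T_0))^{n}$ up to conjugation. The generalized Bott formula then expresses the Morse index on the full loop space as a sum of $\omega$-indices,
\[
m_1=\sum_{k=0}^{n-1} i_{\omega_k}(S,\gamma_0),\qquad \omega_k=e^{2\pi\sqrt{-1}k/n}.
\]
The $\mathbb{Z}_n$-invariant subspace isolates the $k=0$ term, so $m_4=i_1=0$ after the correction for nullity. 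The $[0,4]$ subspace corresponds to the boundary problem selected by $R$; it splits further into Lagrangian-boundary indices $i_{L_0},i_{L_1}$ with $m_3=2$. By the $R$-symmetry $i_{\omega_k}=i_{\omega_{n-k}}$. Hence
\[
2n-4=\sum_{k=1}^{n-1} i_{\omega_k}=2\sum_{k=1}^{n'} i_{\omega_k}.
\]

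\textbf{Step 2 (nullity and the degenerate part).} Assumptions (2) and (3) fix the splitting of $\gamma_0(T_0)$. The eigenvalue-$1$ part coming from the first integrals consists of the translation pair, eliminated, and the energy and angular-momentum blocks, which are $N_1(1,1)\diamond N_1(1,1)$ by (3). Multiplicity exactly $4$ means there is no further degeneracy at $1$. Consequently $\nu_1$ is known, and the nullities $\nu_{\omega_k}$ vanish on the remaining part unless $\omega_k$ is an eigenvalue of the nondegenerate block $M$. We write $S\gamma_0(T_0)\sim N_1(1,1)^{\diamond 2}\diamond M$, where $M\in\Sp(2\cdot\text{rest})$ after reduction; the dimension count uses $\dim\widehat{\mathcal{X}}=6$, giving phase dimension $12$ and an $8$-dimensional block $M$.

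\textbf{Step 3 (stability criterion).} We use the splitting-number and Long's $\omega$-index jump formula: $i_\omega(\gamma)-i_1(\gamma)$ is controlled by the splitting numbers of $M$ at the eigenvalues crossed. Since $i_1=0$ while $i_{\omega_k}$ must sum to $n-2$ over $k=1,\dots,n'$, the index function must increase by a total governed by the eigenvalues of $M$ on the unit circle. A hyperbolic pair, or a Krein quadruple off the circle, contributes no jump. Monotonicity and the bound $|i_\omega-i_1|\le$ (number of elliptic pairs) then force all eigenvalues of $M$ onto $U$, and semisimple, at angles in the arcs between consecutive $\omega_k$. The $L_0/L_1$ indices ($m_3=2$) pin down the jumps near $\omega=-1$ and give the exact eigenvalue distribution. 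Stability of $\gamma(2T)=(S\gamma_0)^n$ follows because powers of elliptic semisimple blocks stay elliptic, provided no eigenvalue $e^{\sqrt{-1}\alpha}$ of $M$ satisfies $e^{\sqrt{-1}n\alpha}=1$ with a nontrivial Jordan block.

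\textbf{Main obstacle.} The hard part is the range $n'\in[1,14]$. The jump argument works only while the required total $n-2$ of jumps can be realized by the at most four elliptic pairs of $M$ with admissible splitting numbers. For large $n$ the counting fails: it allows configurations with a Krein-collision pair or a non-semisimple $N_1(\omega,b)$ block. We would first attempt to exclude these using the $m_3$ boundary index and the strict inequalities between successive $i_{\omega_k}$, checking case by case that the inequalities hold for $n\le 29$. This is presumably where the explicit bound $30$ enters.
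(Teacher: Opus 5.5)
\textbf{There is a genuine gap, in Step~3.} As written, it contains no mechanism that actually forces the eigenvalues onto $\mathbf{U}$.

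First, the $\omega$-index is not monotone in $\omega$. A simple elliptic eigenvalue has $S^+-S^-=\pm1$, and in the configuration that actually occurs the index drops by one at $e^{\theta_1\sqrt{-1}}$ and rises by one at $e^{\theta_2\sqrt{-1}}$.

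Second, your bound $|i_\omega-i_1|\le$ (number of elliptic pairs) leaves out two contributions:
\begin{itemize}
\item the jump $S_M^+(1)=2$ produced at $\omega=1$ by the two blocks $N_1(1,1)$ from energy and angular momentum (with $b=1$);
\item for the Maslov-type index, the term $-D_{\omega}(\hat S_f^{\top})=2$ at $\omega=e^{2\pi\sqrt{-1}/n}$.
\end{itemize}
So the index values are not ``built up'' by elliptic eigenvalues. Let $\Sigma(\omega)$ denote the sum of $S_{M_2}^+-S_{M_2}^-$ over the eigenvalues of $M_2$ in $\mathbf{U}^+$ strictly between $1$ and $\omega$. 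Then one has $\mu(e^{2\pi\sqrt{-1}/n})=m_4+4+\Sigma(e^{2\pi\sqrt{-1}/n})$, $\mu(e^{2\pi j\sqrt{-1}/n})=m_4+2+\Sigma(e^{2\pi j\sqrt{-1}/n})$ for $2\le j\le n'$, and $\mu(-1)=m_4+2+\Sigma(-1)$.

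Third, your dimension count does not match the hypotheses. They concern the parallelogram-reduced problem, where $M=\hat S_f^{\top}\gamma(T/n)\in\Sp(8)$ splits as $N_1(1,1)\diamond N_1(1,1)\diamond M_2$ with $M_2\in\Sp(4)$. With an $8$-dimensional residual block, extra hyperbolic pairs would be invisible to every $\omega$-index and could never be excluded from this data.

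\textbf{The missing idea is a parity argument, and it works uniformly for every odd $n\ge3$.}
\begin{itemize}
\item For odd $n$, $m_3$ is the index on $[0,4]$ with boundary matrix $(\hat S_f^{\top})^2$. It is not a reflection/Lagrangian-boundary index; by the Bott formula it equals $\mu(1)+\mu(-1)$. Hence $m_4=0$ and $m_3=2$ give $\mu(-1)=2$, i.e.\ $\Sigma(-1)=0$.
\item The full-period index gives $\sum_{j=1}^{n'}\mu(e^{2\pi j\sqrt{-1}/n})=\frac12(m_1+4-m_4)=n$, which is odd.
\item Suppose $M_2$ is hyperbolic, or has a single eigenvalue of algebraic multiplicity two in the open upper semicircle (Krein collision, $N_2(\omega,b)$, \ldots). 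Then every $\Sigma(e^{2\pi j\sqrt{-1}/n})$ vanishes; in the second case this is because that eigenvalue's splitting difference equals $\Sigma(-1)=0$. The values are then $4,2,\ldots,2$, whose sum is the even number $n+1$, a contradiction.
\item One simple elliptic pair plus a real pair is ruled out, since it would give $\Sigma(-1)=\pm1$.
\end{itemize}
Hence $M_2$ has two distinct simple conjugate pairs on $\mathbf{U}$ and is diagonalizable. Assumption~(2) keeps its eigenvalues away from the $2n$-th roots of unity, so the formulas above carry no endpoint terms.

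Consequently your final paragraph is mistaken. The restriction $n'\le14$ does not reflect a breakdown of the index argument for large $n$. It only reflects the range in which the Morse indices in assumption~(1) were verified numerically.
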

\begin{theorem}\label{thm:prograde-even}
For the prograde double double orbits, when $n\geq6$ is even, assume that
\begin{enumerate}[label=(\arabic*)]
\item $m_1=2n,m_2=n,m_3=4,m_4=2$;
\item after eliminating the center of mass and the total linear momentum, the multiplicity of the eigenvalue 1 in the monodromy matrix $\gamma(2T)$ of the prograde double-double orbits is exactly 4;
\item the symplectic Jordan form corresponding to the angular momentum of the prograde double-double orbits is \(\begin{bmatrix} 1 & 1\\ 0 & 1 \end{bmatrix}\).
\end{enumerate}
Then the prograde double-double orbits with rotation angle $\theta=\frac{\pi}{n}$ where $n=2n'$ and integer $n'\in[3,15]$ are linearly stable.

In particular, when $n=4$, assume that
\begin{enumerate}[label=(\arabic*)]
\item $m_1=10,m_2=m_3=4,m_4=2$;
\item after eliminating the center of mass and the total linear momentum, the multiplicity of the eigenvalue 1 in the monodromy matrix $\gamma(2T)$ of the prograde double-double orbits is exactly 4;
\item the symplectic Jordan form corresponding to the angular momentum of the prograde double-double orbits is \(\begin{bmatrix} 1 & 1\\ 0 & 1 \end{bmatrix}\).
\end{enumerate}
Then the prograde double-double orbits with rotation angle $\theta=\frac{\pi}{4}$ are linearly stable.
\end{theorem}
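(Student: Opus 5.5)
Following the method of Hu--Sun for the figure-eight orbit \cite{HS2009}, I will convert the Morse index hypotheses in (1) into Maslov-type indices on each symmetry sector of the monodromy matrix, and then read off linear stability from a counting argument.

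\textbf{Setup.} First I write the linearized Hamiltonian system along the prograde orbit on the reduced phase space. Removing the center of mass and the total linear momentum leaves a $12$-dimensional symplectic space. Let $\gamma$ be the fundamental solution. The $D_n$ symmetry of the orbit supplies a symplectic orthogonal matrix $g$ with $g^n=I$ (or of order $2n$ after combining with the time shift) such that
\[
\gamma(2T)=\bigl(g^{-1}\gamma(2T/n)\bigr)^{n}
\]
up to conjugation; the precise form is the content of the symmetry analysis preceding the proof. Then the generalized Bott-type iteration formula of \cite{HS2009} decomposes the Morse index on the $\mathbb{Z}_n$-, $\mathbb{Z}_{n/2}$- and $\mathbb{Z}_2$-invariant subspaces and on the total loop space as sums of $\omega$-indices $i_{\omega}(\gamma_0)$ of a fundamental piece $\gamma_0$, taken over the appropriate $n$-th roots of unity $\omega$.

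\textbf{Index counting.} By the Morse-index/Maslov-index relation, the hypotheses $m_1=2n$, $m_2=n$, $m_3=4$, $m_4=2$ become linear equations in the $i_{\omega_k}$, where $\omega_k=e^{2\pi\sqrt{-1}k/n}$. Subtracting consecutive levels gives the index sums over $\omega_k$ with $k$ odd, over $k$ even with $k\not\equiv0 \pmod{n/2}$, and so on. I then combine three facts: monotonicity of $\omega\mapsto i_\omega$ along the unit circle, the jump bound $|i_{\omega}-i_{\omega'}|\le \dim$ in terms of splitting numbers, and the degeneracy information in hypotheses (2)--(3). Hypothesis (2) says the eigenvalue $1$ has multiplicity exactly $4$ after reduction, and the angular momentum contributes a $N_1(1,1)$ block. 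Together these fix the nullity $\nu_1$ and the splitting numbers at $1$. The conclusion should be that $i_\omega$ is forced to increase by the full amount at roughly $n$ distinct points on the circle. This forces the eigenvalues of the nondegenerate $8$-dimensional part to lie on $\mathbb{U}\setminus\{1\}$ and to be semisimple, in the same way as the stability criterion used for the figure-eight. The range $n'\in[3,15]$ enters only through the numerical verification of the Morse indices and nondegeneracy, which the statement takes as given; for $n\ge 6$ even the counting is uniform in $n$.

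\textbf{The case $n=4$.} Here $n/2=2$, so the $\mathbb{Z}_{n/2}$ and $\mathbb{Z}_2$ subspaces coincide in the decomposition. This explains $m_2=m_3=4$ and $m_1=10\neq 2n$. I redo the same subtraction with $\omega\in\{\pm1,\pm\sqrt{-1}\}$: the surplus $10-2\cdot4$ has to be absorbed at $\omega=-1$ or $\pm\sqrt{-1}$, and I check that it still yields total jumps equal to the dimension of the nondegenerate part.

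\textbf{Main obstacle.} I expect the hardest step to be excluding hyperbolic or Krein-indefinite configurations, such as $N_2(\omega,b)$ blocks or pairs off the unit circle, that could carry the same index sums. My first approach is the inequality $i_{\omega}\le i_{\omega'}+\nu$-type splitting estimates together with $e(\gamma)\le 8$: if any eigenvalue left $\mathbb{U}$, the total variation of $i_\omega$ around the circle would fall below what the computed differences $m_1-m_2-\dots$ require, a contradiction.
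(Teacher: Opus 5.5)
The framework you chose (Bott-type iteration, Morse--Maslov correspondence, splitting numbers) is the paper's, but there is a genuine gap. You never carry out the computation that forces stability, and the mechanism you sketch in its place is not the right one.

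\textbf{The missing step.} The decisive step is the splitting-number formula \eqref{3} evaluated at $\omega=-1$. The index data give $\mu(1)=m_4=2$ and $\mu(-1)=m_3-m_4=2$. The energy block and hypothesis (3) give two $N_1(1,1)$ blocks, so $S_M^+(1)=2$. Nondegeneracy gives $S_M^-(-1)=0$. The correction from the symmetry matrix vanishes there, $D_{-1}(\hat S_f^{\top})=0$. Hence
\[
\sum_{\omega_0}\bigl(S_{M_2}^+(\omega_0)-S_{M_2}^-(\omega_0)\bigr)=m_3-2m_4-2=-2,
\]
whereas $\sum_{\omega_0}\bigl(S_{M_2}^+(\omega_0)+S_{M_2}^-(\omega_0)\bigr)\le e(M_2)/2\le 2$ for $M_2\in\Sp(4)$. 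Saturation puts all four eigenvalues of $M_2$ on $\mathbf{U}$ with $S^+\equiv0$ and $\sum S^-=2$, so $M_2$ is symplectically similar to $R(\theta_1)\diamond R(\theta_2)$.

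This saturation is also what removes your ``main obstacle''. An off-circle quadruple contributes nothing, a simple pair on the circle contributes $\pm1$, and an $N_2(\omega,b)$ block has $S^+-S^-=0$, so none can produce $-2$. The argument uses only $m_3$ and $m_4$, is uniform in $n\ge3$, and covers $n=4$ unchanged. The indices $m_1,m_2$ enter only afterwards, to locate $\theta_1,\theta_2$ through \eqref{eq:common-first-sample}, where $D_{e^{2\pi\sqrt{-1}/n}}(\hat S_f^{\top})=-2$ produces an extra $+2$. For $n=4$, $m_1=10$ gives $\mu(\pm\sqrt{-1})=5$, hence one eigenvalue pair in each half of $\mathbf{U}^+$.

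\textbf{Ingredients that would derail the count.}
\begin{enumerate}
\item The saturation needs $e(M_2)\le4$. The paper works in the $8$-dimensional parallelogram phase space, $M=\hat S_f^{\top}\gamma(T/n)\in\Sp(8)$, which is also where the $m_i$ are computed. In your $12$-dimensional reduced space with an $8$-dimensional nondegenerate part, the bound becomes $e/2\le4$, and a required variation of $2$ forces nothing.
\item The map $\omega\mapsto i_\omega$ is not monotone. Here $\mu(\omega)$ drops by one across each eigenvalue of $M_2$ in $\mathbf{U}^+$. It changes only at those (at most two) points and at $e^{2\pi\sqrt{-1}/n}$ via $D_\omega(\hat S_f^{\top})$, not at ``roughly $n$ points''.
\item You never account for this correction term. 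It arises because $\mu(\omega)=i_\omega(\tilde\gamma*\xi)-i_\omega(\xi)$, with $\xi$ a path from $I$ to $\hat S_f^{\top}$. Dropping it miscounts $\mu(e^{2\pi\sqrt{-1}/n})$ by $2$.
\item At $n=4$, the coincidence $\mathbb{Z}_{n/2}=\mathbb{Z}_2$ explains $m_2=m_3$ but not $m_1=10$. That value is numerical input, not a surplus to be absorbed by a separate argument.
\end{enumerate}
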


\begin{theorem}\label{thm:prograde-odd}
    For the prograde double double orbits, when $n\geq5$ is odd, assume that
\begin{enumerate}[label=(\arabic*)]
\item $m_1=2n,m_3=4,m_4=2$;
\item after eliminating the center of mass and the total linear momentum, the multiplicity of the eigenvalue 1 in the monodromy matrix $\gamma(2T)$ of the prograde double-double orbits is exactly 4;
\item the symplectic Jordan form corresponding to the angular momentum of the prograde double-double orbits is \(\begin{bmatrix} 1 & 1\\ 0 & 1 \end{bmatrix}\).
\end{enumerate}
Then the prograde double-double orbits with rotation angle $\theta=\frac{\pi}{n}$ where $n=2n'+1$ and integer $n'\in[2,14]$ are linearly stable.
\end{theorem}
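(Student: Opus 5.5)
We follow the scheme of Hu--Sun for the figure-eight orbit, adapted to the $\mathbb{Z}_n$ (here $n$ odd) symmetry of the prograde orbit. The plan is to convert the assumed Morse indices $m_1,m_3,m_4$ into Maslov-type $\omega$-indices of the fundamental solution $\gamma$ on $[0,2T]$, and then to use the Bott-type iteration formula over the cyclic group generated by the rotation to pin down where the eigenvalues of $\gamma(2T)$ can sit.

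The first step is to pass from Morse indices to Maslov-type indices. Each of $m_1,m_3,m_4$ is the Morse index of the orbit on a symmetric loop space. By the Morse index theorem in the generalized (Hu--Sun) form, each equals a sum of Maslov-type indices $i_\omega$ of the reduced fundamental solution. Here $\omega$ ranges over the characters of the corresponding symmetry group, so $\omega$ is an $n$-th root of unity $e^{2\pi\sqrt{-1}k/n}$, twisted by the boundary condition coming from the rotation by $\theta=\pi/n$. Concretely:
\begin{itemize}
\item $m_1=\sum_{k=0}^{n-1} i_{\omega_k}(\gamma_1)$, where $\gamma_1$ is the path on one fundamental domain with the rotation-twisted boundary condition;
\item $m_4$ is the term for $\omega=1$ alone;
\item $m_3$ collects the terms for the $[0,4]$-type subspace.
\end{itemize}
Hypotheses (2) and (3) fix the nullity contributions at $\omega=1$: the eigenvalue $1$ has multiplicity exactly $4$, coming from the rotational Jordan block $\begin{bmatrix}1&1\\0&1\end{bmatrix}$ together with the energy/time-translation block. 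This lets us compute $i_1$ and $\nu_1$ explicitly after splitting off these degenerate symplectic blocks.

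The second step is to locate the eigenvalues. We use the splitting-number machinery, with $i_\omega$ as a step function in $\omega$ whose jumps are counted by splitting numbers $S^\pm_M(\omega)$, and the bound $\sum_\omega$ of jumps $\le \dim/2$ on the reduced $2\cdot 6$-dimensional space. Subtracting the hypotheses $m_1=2n$, $m_3=4$, $m_4=2$ shows that each $i_{\omega_k}$ for $k\neq0$ is forced. Since $\sum_{k}i_{\omega_k}=2n$, the values are forced to be the same constant (about $2$) for every nontrivial root, and the index increases by the maximal amount between $\omega=1$ and $\omega=e^{2\pi\sqrt{-1}/n}$. By the jump estimate, the non-degenerate part of $\gamma(2T)$ must then have all its eigenvalues on the unit circle, in arcs $(e^{\sqrt{-1}\alpha})$ with $\alpha\in(0,2\pi/n)$, and be semisimple (Krein-definite). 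Odd $n$ with $n=2n'+1$, $n'\in[2,14]$, enters through the precise counting: the root $\omega=-1$ is absent, so the arcs pair up differently than in the even case of Theorem~\ref{thm:prograde-even}. The finite range reflects where hypothesis (1) is numerically verified.

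The third step concludes linear stability. The degenerate part is the rotational block plus the center-of-mass part already eliminated; the remaining eigenvalues are simple or Krein-definite on $\mathbb{U}$. Hence $\gamma(2T)$ is spectrally stable, and, modulo the integrals, linearly stable.

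The main obstacle is the second step. Ruling out hyperbolic or Krein-indefinite pairs requires showing that the index jumps exhaust the available dimension exactly. For this I would first try the Long-type inequality $|i_{\omega}-i_{1}|\le n$ applied with the precise nullity at $\omega=1$ from hypotheses (2) and (3), and the $\mathbb{Z}_n$-equivariance that forces equal indices for conjugate characters.
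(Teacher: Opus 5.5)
Your architecture (split $m_1$ over the $n$-th roots of unity, then read off the eigenvalues from splitting numbers) is viable, but step 2 is asserted rather than proved. You flag it yourself as the obstacle, and the estimates you name cannot close it. The forcing needs the sharp bound $i_{\omega_k}\ge i_1$ for every $k\neq0$. For $\omega$ in the open upper semicircle write $i_\omega-i_1=S_M^+(1)+C(\omega)-S_M^-(\omega)$, where $C(\omega)$ is the sum of $S_M^+-S_M^-$ over eigenvalues strictly between $1$ and $\omega$. The bound then requires three facts:
\begin{enumerate}
\item $S_M^+(1)=2$, from the two blocks $N_1(1,1)$ (hypothesis (3)).
\item $S_M^-(e^{2\pi k\sqrt{-1}/n})=0$ for $k\neq0$, since hypothesis (2) forbids such eigenvalues of $M$.
\item $|C(\omega)|\le e(M_2)/2\le2$, where $M_2$ is the $4\times4$ essential block of $M=\hat{S}_f^{\top}\gamma(T/n)\in\Sp(8)$ in the parallelogram-reduced space where the $m_i$ are computed.
\end{enumerate}
Your tools are weaker. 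A jump budget of $\dim/2$ on a $12$-dimensional space, or Long's $|i_\omega-i_1|\le4$, gives only $i_{\omega_k}\ge i_1-2$ or worse, and then $\sum_k i_{\omega_k}=2n$ forces nothing.

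With the sharp bound, $2n=\sum_k i_{\omega_k}\ge n\,i_1=2n$ forces $C(e^{2\pi k\sqrt{-1}/n})=-2$ for all $k$. This already holds on the arc from $1$ to $e^{2\pi\sqrt{-1}/n}$. Hence $e(M_2)=4$, all $S^+_{M_2}=0$, $\sum S^-_{M_2}=2$, and $M_2\sim R(\theta_1)\diamond R(\theta_2)$ with $\theta_i\in(0,2\pi/n)$. These are eigenvalues of $M$, not of $\gamma(2T)=M^{2n}$.

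Your identities $m_1=\sum_k i_{\omega_k}$ and $m_4=i_1$ also need justification. In the paper's twisted indices, $\sum_k\mu(e^{2\pi k\sqrt{-1}/n})=m_1+4$, and the jumps of $\mu$ contain $-D_\omega(\hat{S}_f^{\top})$, which equals $2$ at $e^{2\pi\sqrt{-1}/n}$. These two corrections cancel exactly for Long's indices of the path $\tilde{\gamma}*\xi$ in \eqref{eq:4-20} when $i_1(\xi)=0$, for example when $\xi$ is the rotation path from $I_8$ to $\hat{S}_f^{\top}$.

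The paper gets stability from a different and shorter lever, which your remark that ``$\omega=-1$ is absent'' for odd $n$ overlooks. The index $m_3$ lives on $[0,2T/n]=[0,4]$ with boundary matrix $(\hat{S}_f^{\top})^2$, so the $\mathbb{Z}_2$ Bott formula gives $m_3=\mu(1)+\mu(-1)$, hence $\mu(-1)=2$. Inserting $\omega=-1$ into \eqref{3}, where $D_{-1}(\hat{S}_f^{\top})=0$, gives $\sum_{\omega_0}(S^+_{M_2}-S^-_{M_2})=m_3-2m_4-2=-2$ over the whole open upper semicircle. The same budget $e(M_2)/2\le2$ then yields the normal form (Proposition~\ref{prop:common-stability}(2)). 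The paper uses $m_1=2n$ only afterwards, to place $\theta_1,\theta_2$ in $(0,2\pi/n)$, by exactly the lower-bound-equality argument above. So your route, once the sharp bound is supplied, would give stability and location in one stroke from $m_1,m_4$. The paper's stability conclusion instead rests only on $m_3,m_4$ and does not depend on the exact value of $m_1$.
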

\begin{remark}
    In fact, stability analysis of the orbits can also be studied for $n\geq30$ with the same method. However, we only consider the case with $n\leq30$
    in this paper.
\end{remark}
In Theorem~\ref{thm:retrograde-even}--Theorem~\ref{thm:prograde-odd}, assumption (2) is usually referred to as the non-degeneracy assumption. Assumptions (1)--(3) are numerically checked by MATLAB. Let the monodromy matrix be \(\gamma (T)\). We numerically calculate eigenvalues of the matrix $M=\hat{S}_f^{\top}\gamma(\frac{T}{n})$, in which \(M^{n} = \gamma (T)\) and \(\hat{S}_f^{\top}\) is defined in Section~\ref{sec:symmetry}. Assumption (3) is verified by taking the symplectic
inner product of all eigenvectors and generalized eigenvectors corresponding to 1. Numerically, one can find that \(M\) and \(\gamma (T)\) has eigenvalue \(1\) with algebraic multiplicity \(4\), and the corresponding normal form is \(\begin{bmatrix} 1 & 1\\ 0 & 1 \end{bmatrix} \diamond \begin{bmatrix} 1 & 1\\ 0 & 1 \end{bmatrix}\). The main theorem will be proved by applying the generalized Bott-type iteration formula and the index theory, which can be found in Sections~\ref{sec:common-indices} and \ref{sec:stability}.

The paper is organized as follows. Section~\ref{sec:symmetry} analyzes symmetry information of the double-double orbits. Section~\ref{sec:jordan} studies the symplectic decomposition of its monodromy matrix corresponding to the Hamiltonian energy and the angular momentum. Section~\ref{sec:common-indices} gives the Morse index and Maslov-type index formulas, the generalized Bott-type iteration formula and the arguments using splitting numbers for both types of orbits. In Section~\ref{sec:stability}, these formulas are applied to the retrograde and prograde double-double orbits, including the special cases $n=2,3,4$. For readers' convenience, we briefly review the Maslov-type index theory and the generalized Bott-type iteration formula in Appendix~\ref{app:index-theory} and introduce how to compute the Morse index by using numerical methods in Appendix~\ref{app:numerical-index}.
\section{Symmetry of the double-double orbits}\label{sec:symmetry}

We consider the double-double orbits to the  Case  $M = [1, 1, 1, 1]$
and analyze their linear stabilities with respect to the  parameters:  rotation
angle $\theta$ by Maslov index theory.

Geometrically, in such an orbit, two pairs of bodies locate on two sides of the origin. In each pair, the two bodies switch orders periodically. For example, as in Figure~\ref{fig:retrograde-family} and Figure~\ref{fig:prograde-family}, at $t=0$, the four bodies line up on the $x$-axis with an order $m_1\xrightarrow{}m_2\xrightarrow{}m_3\xrightarrow{}m_4$ from right to left. At $t=1$, they are on a line which is an anti-clockwise rotation of $\theta$ of the $x$-axis. The order of the bodies becomes $m_2\xrightarrow{}m_1\xrightarrow{}m_4\xrightarrow{}m_3$ from right to left. At $t=2$, they line up for the third time and the order becomes $m_1\xrightarrow{}m_2\xrightarrow{}m_3\xrightarrow{}m_4$ from right to left, which is the same as it is at $t=0$. If $\theta= \pi/n $ with $n$ an integer, the orbit has
period $2n$. Take the function
space to be
\[E=\{z(t)\in W^{1,2}(\mathbb{R}/T \mathbb{Z}, \widehat{\mathcal{X}})\mid z(t)=z(t+T)\}.\]
The next lemma is from \cite{Peng2015}.
\begin{figure}[htbp]
\centering
\includegraphics[width=0.30\linewidth]{retrograde_m1_n4.pdf}\hfill
\includegraphics[width=0.30\linewidth]{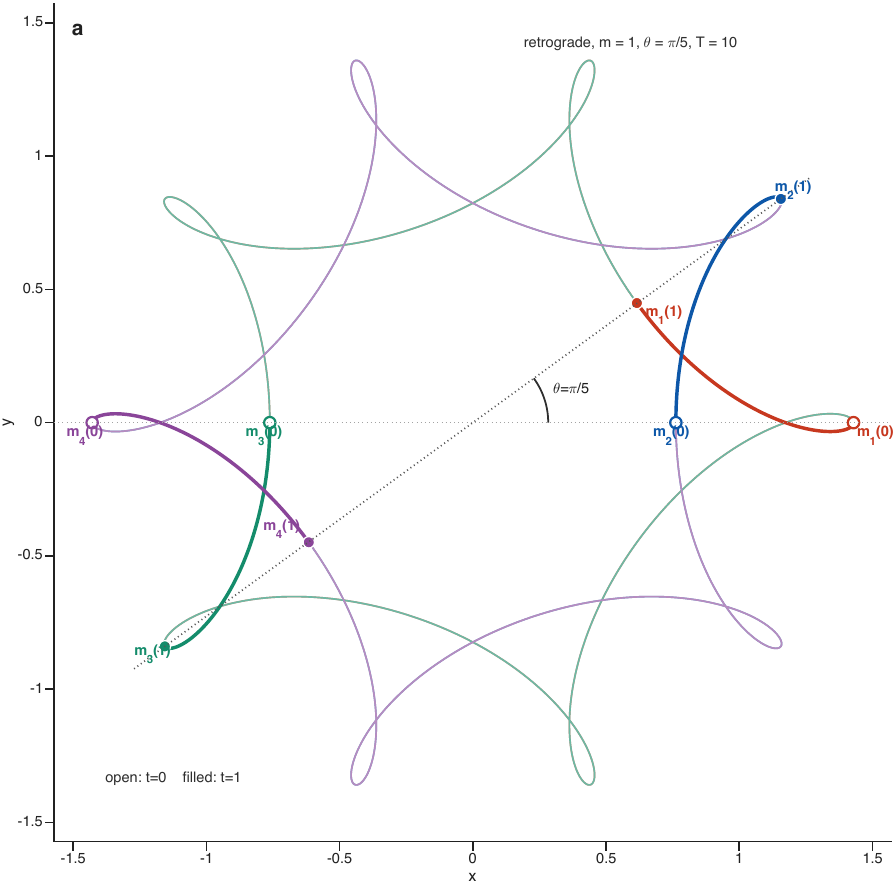}\hfill
\includegraphics[width=0.30\linewidth]{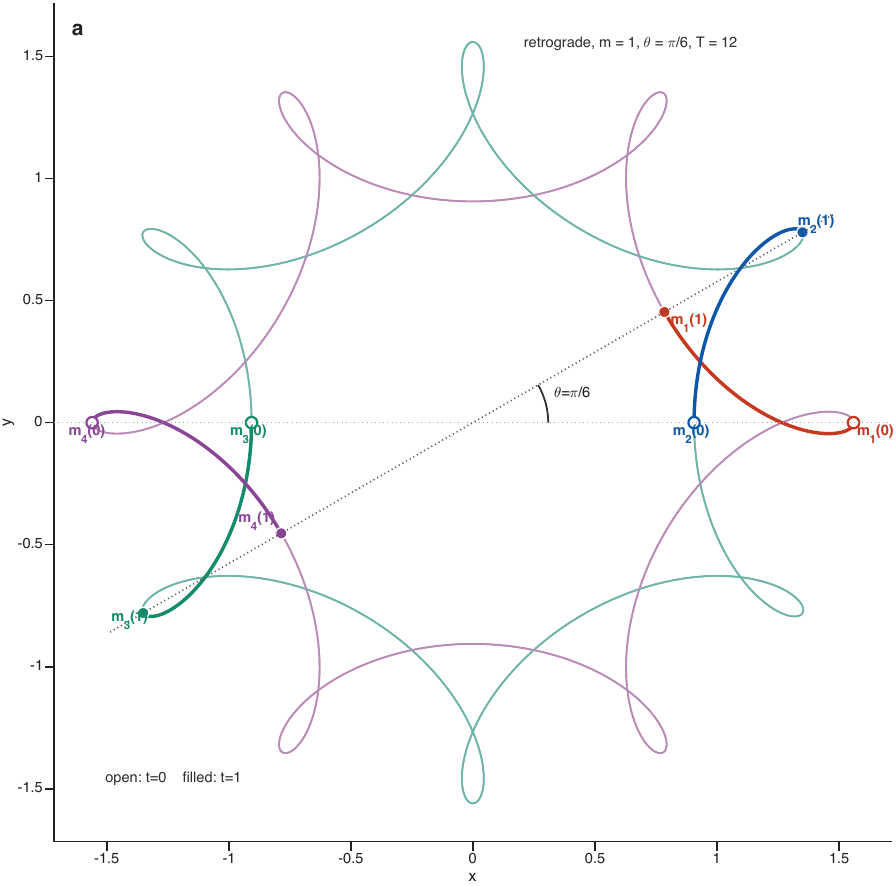}
\par\medskip
\includegraphics[width=0.30\linewidth]{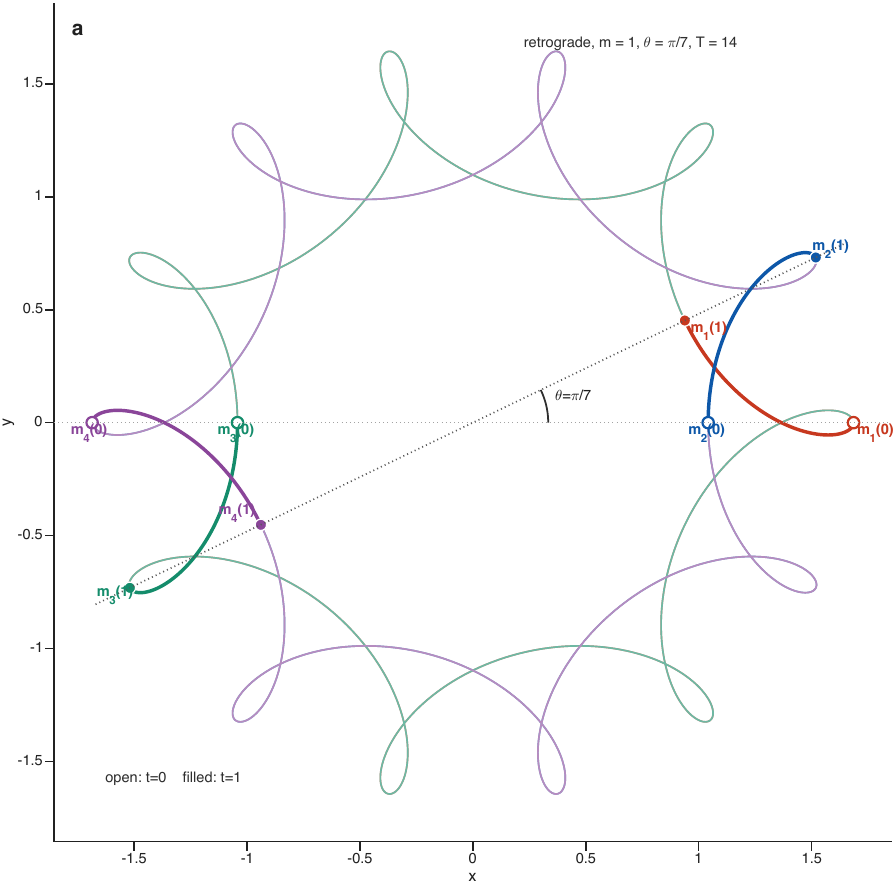}\hfill
\includegraphics[width=0.30\linewidth]{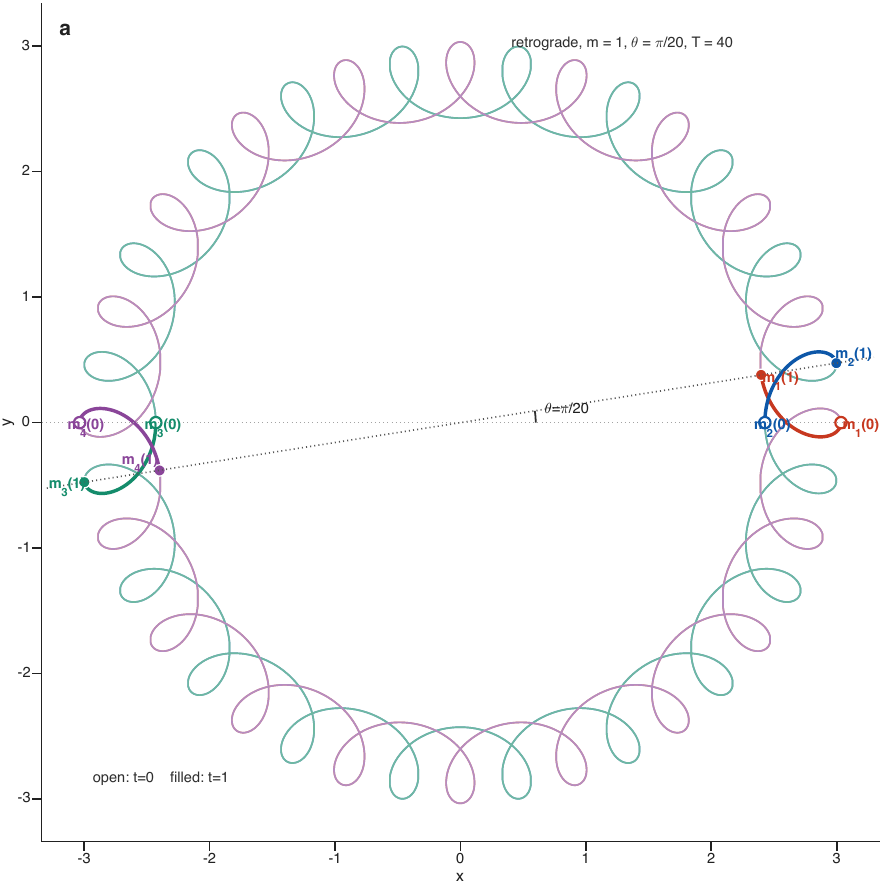}\hfill
\includegraphics[width=0.30\linewidth]{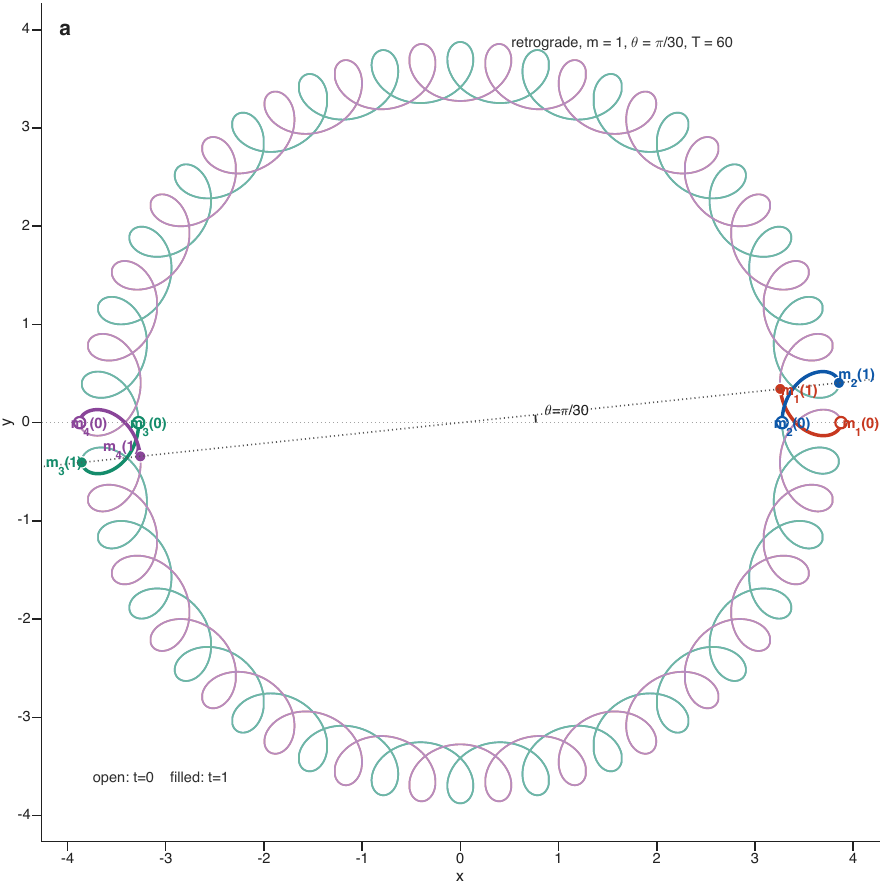}
\caption{Six sets of retrograde double-double orbits with equal masses and $\theta=\frac{\pi}{4},\frac{\pi}{5},\frac{\pi}{6},\frac{\pi}{7},\frac{\pi}{20},\frac{\pi}{30}$.}
\label{fig:retrograde-family}
\end{figure}

\begin{figure}[htbp]
\centering
\includegraphics[width=0.30\linewidth]{prograde_m1_n4.pdf}\hfill
\includegraphics[width=0.30\linewidth]{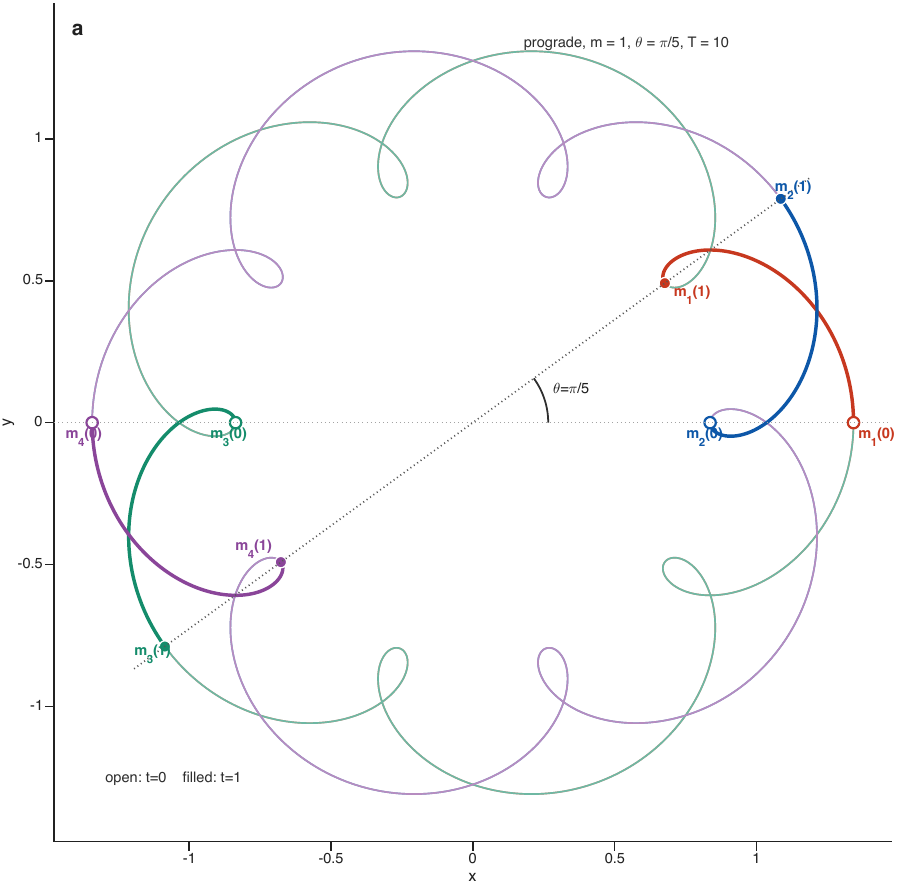}\hfill
\includegraphics[width=0.30\linewidth]{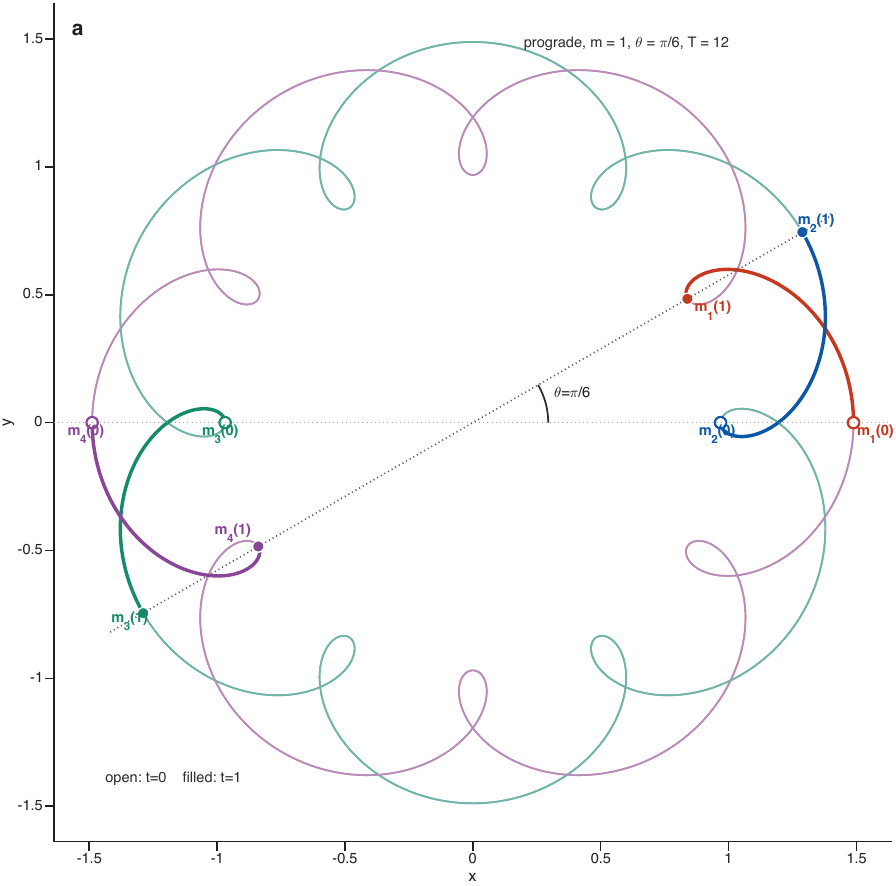}
\par\medskip
\includegraphics[width=0.30\linewidth]{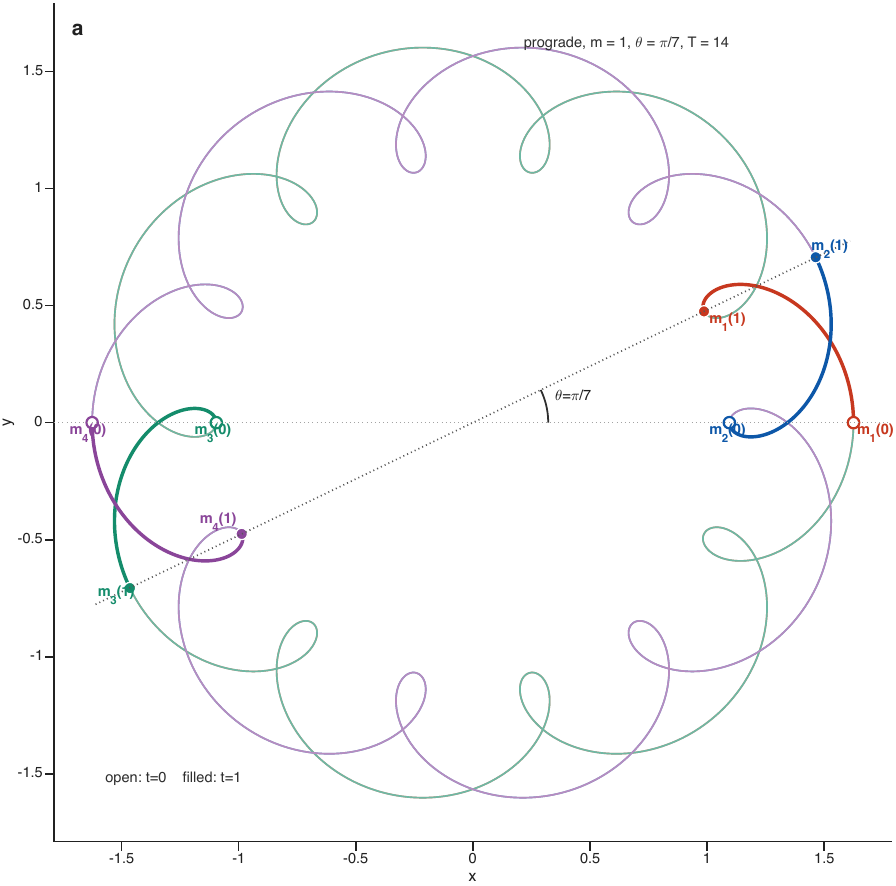}\hfill
\includegraphics[width=0.30\linewidth]{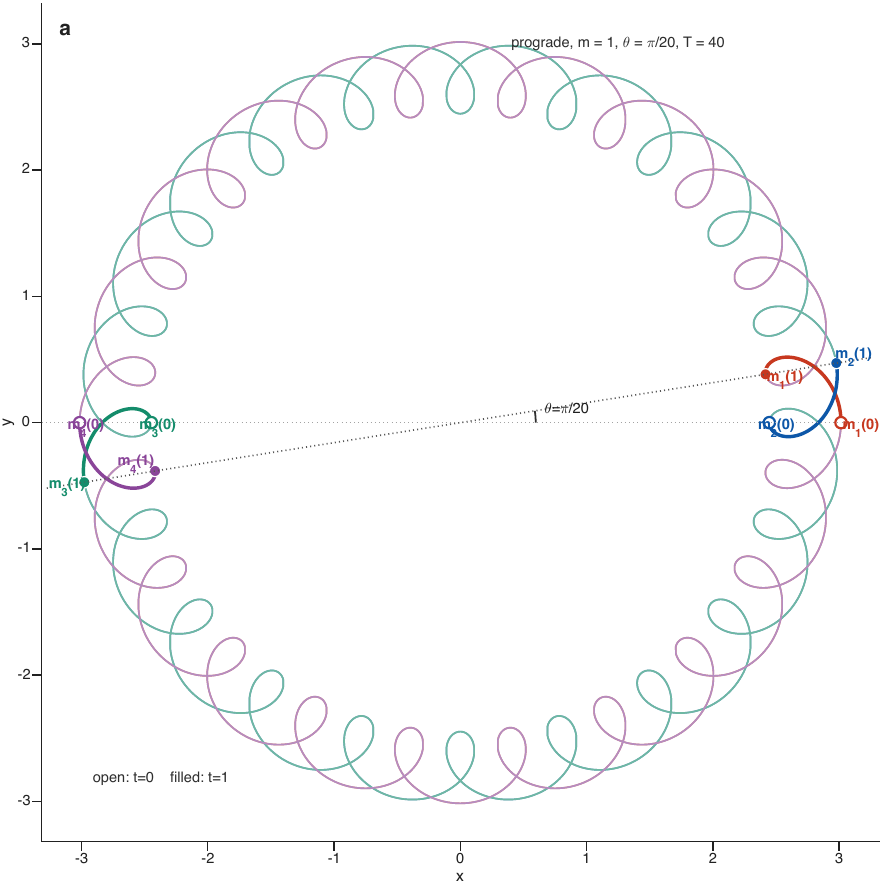}\hfill
\includegraphics[width=0.30\linewidth]{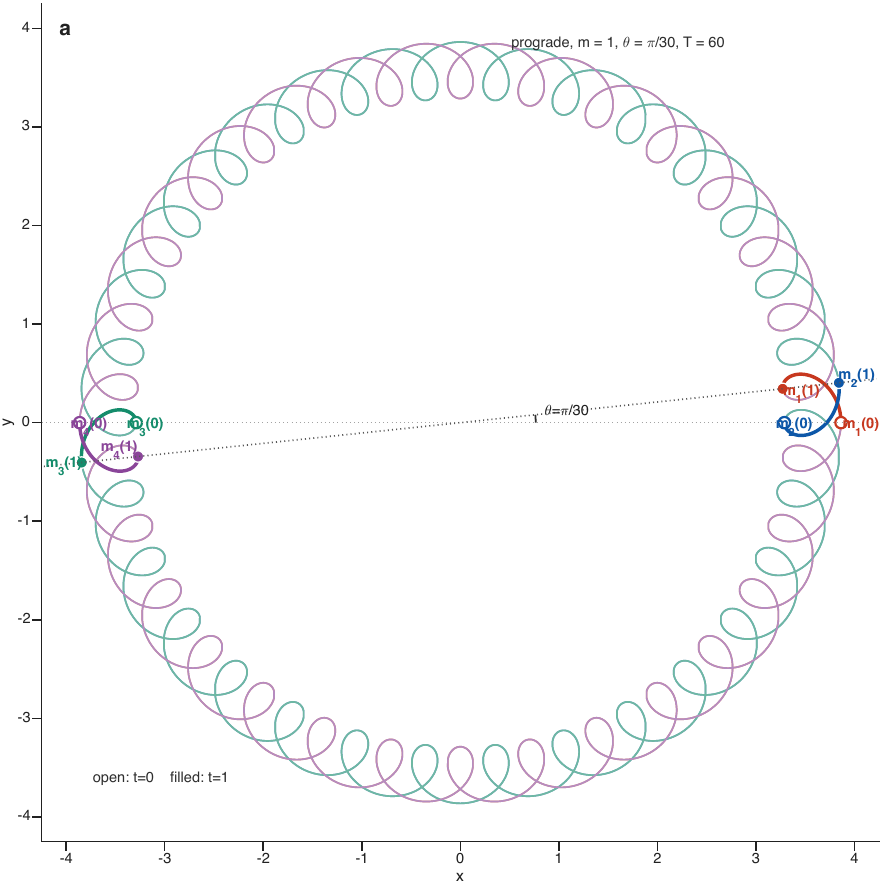}
\caption{Six sets of prograde double-double orbits with equal masses and $\theta=\frac{\pi}{4},\frac{\pi}{5},\frac{\pi}{6},\frac{\pi}{7},\frac{\pi}{20},\frac{\pi}{30}$.}
\label{fig:prograde-family}
\end{figure}
\begin{lemma}\label{lem:orbit-symmetry}
    Given a double-double orbit
\[\zeta(t) = [q_{1x}, q_{1y}, q_{2x}, q_{2y}, p_{1x}, p_{1y}, p_{2x}, p_{2y}]^{\top}\in E.
\]
It satisfies the following symmetry identities
\begin{equation}
\zeta(t + 2) = \hat{S}_f \zeta(t),  \zeta(-t + 2) = \hat{S}_r \zeta(t), \label{30}
\end{equation}
where
\[
\hat{S}_f = \begin{bmatrix} \hat{R}_f & 0 \\ 0 & \hat{R}_f \end{bmatrix}, \quad
\hat{R}_f = \begin{bmatrix} \hat{R} & 0 \\ 0 & \hat{R} \end{bmatrix}, \quad
\hat{R} = \begin{bmatrix} \cos 2\theta & -\sin 2\theta \\ \sin 2\theta & \cos 2\theta \end{bmatrix},
\]

\[
\hat{S}_r = \begin{bmatrix} \hat{R}_r & 0 \\ 0 & -\hat{R}_r \end{bmatrix}, \quad
\hat{R}_r = \begin{bmatrix} \hat{P} & 0 \\ 0 & \hat{P} \end{bmatrix}, \quad
\hat{P} = \begin{bmatrix} \cos 2\theta & \sin 2\theta \\ \sin 2\theta & -\cos 2\theta \end{bmatrix}.
\]

\end{lemma}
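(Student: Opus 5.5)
This lemma is quoted from \cite{Peng2015}. It is essentially a consequence of how the orbit is constructed, so I would verify it in the reduced parallelogram coordinates rather than derive it from dynamics. In the parallelogram problem the configuration is $x_3=-x_1$, $x_4=-x_2$. The state is therefore described by $q_1,q_2\in\mathbb{R}^2$ and momenta $p_1,p_2$, which is exactly the vector $\zeta$. The reduced Newtonian system is invariant under two kinds of maps. The first is any planar orthogonal map $R$ acting diagonally, $(q,p)\mapsto(Rq,Rp)$. The second is time reversal combined with a reflection $P$, $(q,p)\mapsto(Pq,-Pp)$, $t\mapsto -t$. Both preserve $U$ and the kinetic energy.

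The first step is to recall how the orbit is obtained in \cite{Chen2001,Peng2015}. It is an action minimizer on the segment $t\in[0,1]$ between two boundary subspaces. At $t=0$ the configuration is collinear on the $x$-axis. At $t=1$ it is collinear on the line rotated by $\theta$, with the order within each pair switched. It is then extended by reflections across these lines. The Euler--Lagrange equations together with the free-boundary conditions give transversality conditions at the endpoints: the momentum is perpendicular in the appropriate sense to the boundary subspace. These conditions say precisely that the solution can be continued past $t=0$ and $t=1$ by reflecting the positions across the line and negating the reflected velocities.

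The second step gives the $\hat S_r$ identity. The reflection across the line at angle $\theta$ is $\hat P$, whose matrix $\begin{bmatrix}\cos2\theta&\sin2\theta\\ \sin2\theta&-\cos2\theta\end{bmatrix}$ is exactly the reflection across the line at angle $\theta$. The transversality condition at $t=1$ implies that $\tilde\zeta(t):=\hat S_r\zeta(2-t)$ solves the same equation, since time reversal negates momenta and so fits the $-\hat R_r$ block. It also implies that $\tilde\zeta$ agrees with $\zeta$ at $t=1$, in both position and momentum. Uniqueness of solutions of the ODE then gives $\zeta(2-t)=\hat S_r\zeta(t)$.

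The third step gives the $\hat S_f$ identity. The same argument at $t=0$, using the reflection $\operatorname{diag}(1,-1)$ across the $x$-axis, gives $\zeta(-t)=S_0\zeta(t)$, where $S_0$ is the corresponding reversing symmetry. Composing the two reversing symmetries,
\[
\zeta(t+2)=\hat S_r\zeta(-t)=\hat S_rS_0\zeta(t).
\]
The product of the two reflections is the rotation by $2\theta$, and on momenta the two minus signs cancel. So $\hat S_rS_0=\hat S_f$ with $\hat R$ the rotation by $2\theta$. Extending to all $t$ is immediate.

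The main obstacle is the first step. One must check that the boundary transversality conditions are of exactly the stated form, including that the label swap $m_1\leftrightarrow m_2$ at $t=1$ is compatible with identifying $\zeta$ componentwise without a permutation. I would handle this by tracking how the pair ordering enters the definition of $\zeta$ in \cite{Peng2015}, and if necessary absorbing the permutation into the choice of coordinates $q_1,q_2$.
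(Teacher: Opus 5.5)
Your argument is correct. It supplies a proof the paper does not give, since the paper simply imports the lemma from \cite{Peng2015}. Your route is the standard derivation. The two reversing symmetries $\zeta(-t)=S_0\zeta(t)$ and $\zeta(2-t)=\hat S_r\zeta(t)$ follow from the endpoint conditions and uniqueness. They compose to $\hat S_rS_0=\hat S_f$, because $\hat P\operatorname{diag}(1,-1)=\hat R$ on the position blocks, and the two sign changes cancel on the momentum blocks.

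The obstacle you flag is not real. At $t=1$ every body lies on the line fixed by $\hat P$, with momentum perpendicular to it, so $\hat S_r\zeta(1)=\zeta(1)$ holds body by body and no permutation enters. The switch $m_1\leftrightarrow m_2$ in radial order is a feature of the trajectory, not of the symmetry. Indeed $\zeta(2)=\hat S_r\zeta(0)$, i.e. $q_i(2)=\hat Pq_i(0)$, restores the original order on the line at angle $2\theta$, exactly as the paper describes. Absorbing a permutation into the coordinates would produce a matrix different from $\hat S_r$, so you should not do it.

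For the orbits actually used here, the endpoint conditions are imposed by construction in Appendix~\ref{app:numerical-index}: the form of $Y_0$ in \eqref{eq:B-10} and the shooting conditions \eqref{eq:dd-shooting}. So your transversality step is needed only for the variationally constructed orbits. There, collision-freeness of the minimizer is what puts its endpoints in the relative interior of the boundary sets (e.g. $a>b>0$ at $t=0$), so the first-variation conditions are equalities rather than one-sided inequalities.

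Finally, by the appendix the parallelogram is $(q_1,q_2,-q_2,-q_1)$, i.e. $x_3=-x_2$ and $x_4=-x_1$. Your labeling $x_3=-x_1$, $x_4=-x_2$ is off, but this is harmless for the argument.
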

By \eqref{30}, we have
\begin{equation}
\zeta(t) = \hat{S}_f^{\top} \zeta(t+\frac{T}{n}),  \zeta(t) = \hat{S}_r^{\top} \zeta(\frac{T}{n}-t).
\label{eq:2-2}
\end{equation}
So we define a $\mathbb{Z}_n$-group action for the generator $g_1\in \mathbb{Z}_n$ and
a $\mathbb{Z}_2$-group action for the generator $g_2\in \mathbb{Z}_2$,where
\begin{equation*}
g_1: E \to E, \quad z(t) \mapsto \hat{S}_f^{\top} z\left(t + \frac{T}{n}\right),
\end{equation*}
\begin{equation*}
g_2: E \to E, \quad z(t) \mapsto \hat{S}_r^{\top} z\left(\frac{T}{n}-t\right).
\end{equation*}

Obviously, $\zeta(t)$ is in the dihedral group $D_n$-invariant loop space.In terms of generators and relations, $D_n$ is
\[D_n=\langle g_1,g_2\mid g_1^n=g_2^2=e, g_1g_2=g_2g_1^{-1}\rangle.\]
With a slight abuse of notation, we denote with the same symbol the dihedral group as well as its unitary representation in \(E\). Thus, the group \(D_n\) (actually a unitary representation of the dihedral group \(D_n\)) is presented as follows
\[
D_n := \langle \hat{S}_f^{\top}, \hat{S}_r^{\top} \in U(E) \mid (\hat{S}_f^{\top})^n = (\hat{S}_r^{\top})^2 = (\hat{S}_f^{\top}\hat{S}_r^{\top})^2 = I_8 \rangle \subset U(E).
\]
\section{The symplectic Jordan block decomposition of the poincare map}\label{sec:jordan}

Let $z = z(t) = \bigl(\dot{x}(t)^{\top}, x(t)^{\top}\bigr)^{\top} = \bigl(y(t)^{\top}, x(t)^{\top}\bigr)^{\top}$ be a periodic solution of the Hamiltonian system:
\begin{align}
\dot{z}(t) &= J DH(t,z(t)),  \label{31}\\
z(0) &= z(T),\label{32}
\end{align}
where
\[
J = \begin{bmatrix}
0 & -I_n \\
I_n & 0
\end{bmatrix},
\]
$I_n$ is the identity matrix on $\mathbb{R}^n$ and $H$ is the corresponding Hamiltonian.

The associated fundamental solution $\gamma \equiv \gamma(t)$ satisfies
\begin{align}
\dot{\gamma}(t) &= J D^2 H(t,z(t))\,\gamma(t),  \label{eq:3-3}\\
\gamma(0) &= I_{2n}.
\label{eq:3-4}
\end{align}

It is well-known that $\gamma$ is a path in the symplectic group \[\Sp(2n) = \{ M \in \GL(2n) \mid M^{\top} J M = J \},
\]
and the monodromy matrix (or the linear Poincaré map) corresponding to the Hamiltonian $H$ is defined to be $\gamma(T)$. Linear stability of periodic orbits in the $N$-body problem is defined as follows:

\begin{definition}\label{def:linear-stability}
    A periodic orbit in the $N$-body problem is \emph{linearly stable} (or the monodromy matrix $\gamma(T)$ is stable), if the following two conditions are satisfied:
\begin{enumerate}
\item the monodromy matrix $\gamma(T)$ can be diagonalized after reduction of center of mass, linear momentum, rotation, Hamiltonian energy and angular momentum;
\item $\gamma(T)$'s eigenvalues are all on the unit circle.
\end{enumerate}
\end{definition}

As is well known for the planar N-body problem,there are 6 integrals: 2 for linear momentum, 2 for center of mass, 1 for angular momentum, and 1 for energy. For the double-double orbits, the center of masses is fixed to the origin,the dimension of the eigenspace associated to the eigenvalue 1 is at least 4.

Let $\gamma(t)$ be the fundamental solution of the double-double orbits, and set $M=\hat{S}_f^{\top}\gamma(\frac{T}{n})$, from the $\mathbb{Z}_n$-invariance property, we have
\[\gamma(T)=M^n.\]
So the stabilities of $\gamma(T)$ and $M$ are the same. Now we analyze the eigenvalues of $M$.

Note that if \( x(t) \) is a period \( T \) solution of the Newton system, then \( h^{-2/3} x(ht) \) is also a solution with period \( T/h \). Let \( z_h(t) = (h^{1/3} \dot{x}(ht), h^{-2/3} x(ht))^{\top} \) be the double-double orbit, then \( z_h(t) \) is a solution of the corresponding Hamiltonian system. it satisfies
\begin{equation}
\hat{S}_f^{\top} z_h(T_h/n) = z_h(0),\label{36}
\end{equation}
where \( T_h = T/h \) is the period of the orbit \( z_h(t) \) and
\begin{equation}
H(z_h) = h^{2/3} H(z_1).
\label{eq:3-6}
\end{equation}
Note that for the double-double orbit as a bounded orbit in the planar four body problem, its energy is negative, which is an important fact for calculating the normal form of \( M \). The next lemma is motivated by \cite{E}(pp. 70--71), \cite{Long2002}(p. 327) in their studies of the closed characteristics in convex hypersurface for Hamiltonian system in \( \mathbb{R}^{2n} \).
\begin{lemma}\label{lem:energy-invariance}
    With the notations above and \( \gamma(t)(t \in [0, \frac{T}{n}]) \) the fundamental solution path of the Hamiltonian system, the matrix $M=\hat{S}_f^{\top}\gamma(\frac{T}{n})$ satisfies
\begin{equation}
M \dot{z}(0) = \dot{z}(0), \label{34}
\end{equation}
\begin{equation}
-\frac{T}{n} \dot{z}(0) + M \frac{\mathrm{d}}{\mathrm{d}h} z_h(0)\big|_{h=1} = \frac{\mathrm{d}}{\mathrm{d}h} z_h(0)\big|_{h=1}. \label{35}
\end{equation}
\end{lemma}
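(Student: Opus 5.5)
The plan is to differentiate two families of solutions and push each through the linearized flow. For \eqref{34} we use time translations of $z$; for \eqref{35} we use the scaling family $z_h$. Throughout, the key tool is that for any smooth family of solutions $w_s(t)$ of \eqref{31}, the derivative $\partial_s w_s(t)$ solves the variational equation \eqref{eq:3-3}. Since $\gamma(0)=I$, this gives $\partial_s w_s(t)=\gamma(t)\,\partial_s w_s(0)$. We also use that the symmetry $\hat S_f$ is linear and constant, so it commutes with differentiation.

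For \eqref{34}, take the family $w_s(t)=z(t+s)$ and differentiate at $s=0$. This shows $\dot z(t)$ solves the linearized system, so $\dot z(t)=\gamma(t)\dot z(0)$. Next, the $\mathbb Z_n$-symmetry \eqref{eq:2-2} gives $z(t)=\hat S_f^{\top}z(t+\frac Tn)$ for all $t$. Differentiating in $t$ yields $\dot z(t)=\hat S_f^{\top}\dot z(t+\frac Tn)$. At $t=0$ this reads $\dot z(0)=\hat S_f^{\top}\gamma(\frac Tn)\dot z(0)=M\dot z(0)$, which is \eqref{34}.

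For \eqref{35}, the family $z_h(t)$ consists of solutions of \eqref{31} by the homogeneity of $U$. Hence $v(t):=\frac{d}{dh}z_h(t)\big|_{h=1}$ satisfies $v(t)=\gamma(t)v(0)$. The symmetry \eqref{36} holds for every $h$, by the same argument as \eqref{eq:2-2} applied to $z_h$, which has period $T/h$. In the form needed it reads $z_h(0)=\hat S_f^{\top}z_h(\frac{T}{nh})$. Differentiating in $h$ at $h=1$ requires the chain rule, because the evaluation time $\frac{T}{nh}$ also depends on $h$:
\[
v(0)=\hat S_f^{\top}\Bigl(v(\tfrac Tn)+\dot z(\tfrac Tn)\cdot\bigl(-\tfrac Tn\bigr)\Bigr).
\]
Substituting $v(\frac Tn)=\gamma(\frac Tn)v(0)$ gives $\hat S_f^{\top}v(\frac Tn)=Mv(0)$. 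For the other term, differentiating the symmetry at $t=0$ as before gives $\hat S_f^{\top}\dot z(\frac Tn)=\dot z(0)$. Together these yield $v(0)=Mv(0)-\frac Tn\dot z(0)$, which is \eqref{35}.

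The main obstacle is bookkeeping rather than conceptual. One must justify that the symmetry relation for $z_h$ holds with the $h$-dependent time $T_h/n$ rather than a fixed time, and keep track of the sign produced by $\frac{d}{dh}(T/(nh))=-T/n$ at $h=1$. Along the way, one should check that the scaling $h^{-2/3}x(ht)$ genuinely maps solutions to solutions of \eqref{28}: both $\ddot x$ and $\nabla U$ scale like $h^{4/3}$, since $\nabla U$ is homogeneous of degree $-2$. One should also check that the rotation $\hat S_f$ commutes with this scaling, which holds because both act linearly and $\hat S_f$ is block diagonal in the position and momentum variables.
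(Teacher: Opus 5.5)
Your proposal is correct and follows essentially the same route as the paper. For \eqref{34} you differentiate the $\mathbb{Z}_n$-symmetry relation in $t$. For \eqref{35} you differentiate the scaled relation \eqref{36} in $h$, keeping the chain-rule term $-\frac{T}{n}\dot z(\frac{T}{n})$, and combine it with $\partial_h z_h(t)\big|_{h=1}=\gamma(t)\,\partial_h z_h(0)\big|_{h=1}$; this is exactly how the paper proceeds via \eqref{37}--\eqref{38}.

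Your version is slightly more careful than the paper's in two places. You use the linearized-flow identity only at $h=1$, where $\gamma$ is the fundamental solution along $z_1=z$. You also state explicitly the step $\hat S_f^{\top}\dot z(\frac{T}{n})=\dot z(0)$, which the paper uses without comment.
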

\begin{proof}

Since
\begin{equation}
    \tilde{S}_d z(t + T/n) = z(t), \label{33}
\end{equation}
differentiating this formula with respect to $t$, we get \eqref{34} by setting $t=0$. To prove \eqref{35}, from \eqref{36}, we get
\begin{equation}
    \gamma(T_h/n)\frac{\mathrm{d}}{\mathrm{d}h} z_h(0) = \frac{\mathrm{d}}{\mathrm{d}h} z_h(T_h/n). \label{37}
\end{equation}
Differentiating \eqref{36} with respect to $h$ yields
\begin{equation}
    \tilde{S}_d \dot{z}(T_h/n)\frac{d(T_h/n)}{dh} + \tilde{S}_d \frac{\mathrm{d}}{\mathrm{d}h} z_h(T_h/n) = \frac{\mathrm{d}}{\mathrm{d}h} z_h(0). \label{38}
\end{equation}
Plug \eqref{37} into \eqref{38}, and let $h=1$; we get \eqref{35}.
\end{proof}

\begin{lemma}\label{lem:energy-jordan}
     Under the same setting as the last lemma, there exist \( P \in \Sp(8) \) and \( M_1 \in \Sp(6) \), such that
\begin{equation}
M = P^{-1}(N_1(1,1) \diamond M_1)P.
\label{eq:3-12}
\end{equation}
\end{lemma}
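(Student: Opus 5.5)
The plan is to use the two vectors supplied by Lemma~\ref{lem:energy-invariance},
\[
\xi_1=\dot z(0),\qquad \xi_2=\frac{\mathrm{d}}{\mathrm{d}h}z_h(0)\Big|_{h=1},
\]
as the first two basis vectors of a symplectic frame. By \eqref{34} and \eqref{35} they satisfy
\[
M\xi_1=\xi_1,\qquad M\xi_2=\xi_2+\tfrac{T}{n}\xi_1 .
\]
So $\mathrm{span}\{\xi_1,\xi_2\}$ is $M$-invariant, and on it $M$ acts by the matrix $\begin{bmatrix}1 & T/n\\ 0 & 1\end{bmatrix}$. After rescaling one basis vector this is the block $N_1(1,1)$, provided the span is a symplectic plane, i.e.\ $\omega(\xi_1,\xi_2)=\xi_1^{\top}J\xi_2\neq 0$.

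The key step is to compute this symplectic product via the energy. Since $\dot z=JH'(z)$, we get
\[
\omega(\xi_1,\xi_2)=(JH'(z(0)))^{\top}J\xi_2=\pm H'(z(0))\cdot\xi_2=\pm\frac{\mathrm{d}}{\mathrm{d}h}H(z_h(0))\Big|_{h=1},
\]
with the sign fixed by the convention $J^{\top}J=I$. By \eqref{eq:3-6}, $H(z_h)=h^{2/3}H(z_1)$, so this derivative equals $\tfrac23 H(z_1)$. This is nonzero because the energy of the bounded double-double orbit is negative. Hence $V=\mathrm{span}\{\xi_1,\xi_2\}$ is a two-dimensional symplectic subspace, and $\xi_1$ is nonzero since the orbit is not an equilibrium.

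Next I show that the symplectic orthogonal complement $V^{\omega}$ is $M$-invariant. This holds because $M$ is symplectic: it is the product of $\gamma(T/n)\in\Sp(8)$ and $\hat S_f^{\top}$, which is block diagonal with rotation blocks acting identically on positions and momenta, so it commutes with $J$ and is orthogonal, hence symplectic. Since $M$ is invertible and $MV=V$, for $w\in V^{\omega}$ and $v\in V$ we have
\[
\omega(Mw,v)=\omega(Mw,MM^{-1}v)=\omega(w,M^{-1}v)=0 .
\]

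Finally, I choose a symplectic basis adapted to the splitting $\mathbb{R}^8=V\oplus V^{\omega}$. In $V$, take $e_1=\xi_1$ and $f_1=\xi_2/\omega(\xi_1,\xi_2)$, possibly swapping order or sign to match the normal-form convention. Then $M|_V$ has the form $\begin{bmatrix}1&b\\0&1\end{bmatrix}$ with $b$ a nonzero multiple of $T/n$. If $b>0$, rescaling $e_1\mapsto \lambda e_1$, $f_1\mapsto\lambda^{-1}f_1$ brings it to $N_1(1,1)$. If $b<0$, the block is $N_1(1,-1)$; the sign depends on $\mathrm{sgn}\,H<0$ and on the convention for $J$. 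Completing with a symplectic basis of $V^{\omega}$ gives $P$ with $PMP^{-1}=N_1(1,1)\diamond M_1$, where $M_1\in\Sp(6)$ is $M|_{V^{\omega}}$.

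The main obstacle I expect is this sign bookkeeping, together with checking that $\hat S_f^{\top}$ is genuinely symplectic for the paper's $J$. The negativity of the energy is what ensures that the block is $N_1(1,1)$ rather than $N_1(1,-1)$, and I would verify that carefully.
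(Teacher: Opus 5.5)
Your argument follows the paper's route: the vectors $\dot z(0)$ and $\xi_2$, invariance from \eqref{34}--\eqref{35}, the symplectic product evaluated through $H(z_h)=h^{2/3}H(z_1)$, and splitting off $V^{\omega}$. The gap is that you stop at the one point that carries the content of the lemma. You leave open whether the block on $V=\mathrm{span}\{\dot z(0),\xi_2\}$ is $N_1(1,1)$ or $N_1(1,-1)$.

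This sign is not a convention that reordering or re-signing basis vectors can adjust. For $v\in V\setminus\ker(M-I)$, the sign of $\langle J(M-I)v,v\rangle$ does not depend on any basis. In a symplectic basis where $M|_V=N_1(1,b)$ it equals $b\,v_2^2$, so it fixes the sign of $b$. The two blocks are genuinely different: their splitting numbers at $1$ are $(1,1)$ and $(0,0)$, and this difference is what Proposition~\ref{prop:common-stability} uses later.

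The normalization you wrote down also points the wrong way. With $\omega(u,v)=u^{\top}Jv$ you get $\omega(\dot z(0),\xi_2)=H'(z(0))^{\top}\xi_2=\tfrac23H(z_1)<0$. Then $f_1=\xi_2/\omega(\dot z(0),\xi_2)$ gives $e_1^{\top}Jf_1=+1$. But columns $p_i,p_{i+4}$ of a matrix $P$ with $P^{\top}JP=J$ satisfy $p_i^{\top}Jp_{i+4}=-1$. So your pair is symplectic for $-J$, and it would produce a spurious $N_1(1,-1)$.

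The missing computation is short. For the paper's $J$, a pair $(e,f)$ is a symplectic pair if and only if $\langle Je,f\rangle=1$, and
\[
\langle J\dot z(0),\xi_2\rangle=\langle J^2H'(z(0)),\xi_2\rangle=-\frac{\mathrm{d}}{\mathrm{d}h}H(z_h(0))\Big|_{h=1}=-\tfrac23H(z_1)>0 .
\]
Take $e=\alpha\dot z(0)$ and $f=\frac{n\alpha}{T}\xi_2$, with $\alpha^2\frac{n}{T}\bigl(-\frac23H(z_1)\bigr)=1$. This pair is symplectic, and $Me=e$, $Mf=f+e$, so $M|_V=N_1(1,1)$ in a symplectic basis. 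Completing with a symplectic basis of $V^{\omega}$ then gives the lemma. This is exactly the paper's computation $\omega(\xi_1,\xi_2)=-\frac{T}{n}\frac{\mathrm{d}}{\mathrm{d}h}H(z_h)>0$ with $\xi_1=\frac{T}{n}\dot z(0)$. It also shows precisely where the negativity of the energy enters: for $H>0$ the same computation would give $N_1(1,-1)$.
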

\begin{proof}
Let \( \xi_1 = \frac{T}{n}\dot{z}(0) \) and \( \xi_2 = \frac{\mathrm{d}}{\mathrm{d}h} z_h(0)\big|_{h=1} \). Direct computations show that
\[
\omega(\xi_1, \xi_2) = \langle J T J H'(z(0)), \frac{\mathrm{d}}{\mathrm{d}h} z_h(0)\big|_{h=1} \rangle = -\frac{T}{n} \frac{\mathrm{d}}{\mathrm{d}h} H(z_h) > 0.
\]
So the space spanned by \( \xi_1, \xi_2 \) is an invariant symplectic subspace of \( M \), and furthermore they form the symplectic basis of this subspace. When \( M \) restricts to this subspace, it is \( N_1(1,1) \). Since \( M \) is a symplectic matrix, we have the result.
\end{proof}

\begin{remark}\label{rem:translation-jordan}
    The Jordan block of \( \hat{S}_f^{\top}\gamma(\frac{T}{n}) \) corresponding to the translation invariance is \( N_1(1,1) \).
\end{remark}

In the following, we will compute the normal form corresponding to the angular momentum. Let \( A_d = \begin{pmatrix} A & 0 \\ 0 & A\end{pmatrix} \) with \( A = \begin{pmatrix} \tilde{A} & 0 \\ 0 & \tilde{A} \end{pmatrix} \) and \( \tilde{A} = \begin{pmatrix} 0 & -1 \\ 1 & 0 \end{pmatrix} \). Then \( \Gamma_s = \exp(sA_d) \) is the rotation flow, it commutes with the Hamiltonian flow \( \phi_t \) of the double-double, that is
\begin{equation}
    \phi_t \Gamma_s(z_0) = \Gamma_s \phi_t(z_0).
\label{eq:3-13}
\end{equation}
Choose \( z_0 \) to be a point of the eight, set \( t = T/n \), and differentiate \eqref{eq:3-13} with respect to \( s \) at \( s = 0 \); we get
\begin{equation}
    \gamma(T/n)A_d z_0 = A_d \hat{S}_f z_0.
\label{eq:3-14}
\end{equation}
Since \( A_d \hat{S}_f= \hat{S}_f A_d \), we have
\begin{equation}
   \hat{S}_f^{\top}\gamma(T/n)A_d z_0 = A_d z_0.
\label{eq:3-15}
\end{equation}
This implies that \( 1 \) is an eigenvalue of \( M \). we have proved:
\begin{lemma}\label{lem:jordan-decomposition}
    There exist \( P \in \Sp(8) \), \( M_2 \in \Sp(4) \) such that
\begin{equation}
M = P^{-1}(N_1(1,1) \diamond N_1(1,b) \diamond M_2)P, \label{2}
\end{equation}
where \( b = 1, 0 \) or \( -1 \).
\end{lemma}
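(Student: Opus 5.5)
Starting from Lemma~\ref{lem:energy-jordan}, we already have $M=P_0^{-1}(N_1(1,1)\diamond M_1)P_0$ with $M_1\in\Sp(6)$ acting on the symplectic complement $V=\mathrm{span}\{\xi_1,\xi_2\}^{\omega}$, an $M$-invariant symplectic subspace of dimension $6$. The plan is to locate inside $V$ a second $M$-invariant symplectic plane carrying the angular-momentum data, with $M$ acting on it as $N_1(1,b)$, and then split again.

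\textbf{Step 1: the rotation eigenvector.} By \eqref{eq:3-15}, $\eta_1:=A_dz_0$ satisfies $M\eta_1=\eta_1$. We check $\eta_1\in V$. Rotation invariance of $H$ gives $\langle H'(z),A_dz\rangle=0$. Since $A_d$ is skew and commutes with $J$, and $\xi_1=\frac{T}{n}JH'(z_0)$, we get $\omega(\xi_1,\eta_1)=\pm\frac{T}{n}\langle H'(z_0),A_dz_0\rangle=0$. For $\xi_2$, the scaling $z_h$ is linear in $z$ at fixed time: $\xi_2=Dz_0$ with $D=\mathrm{diag}(\tfrac13 I,-\tfrac23 I)$ plus a time-derivative term proportional to $\dot z(0)=JH'(z_0)$. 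The $\dot z(0)$ part pairs to zero with $\eta_1$ as before. The remaining part reduces to $\langle JA_dz_0,Dz_0\rangle$, which is $\pm$ a combination of $\langle x,\tilde A_d x\rangle$, $\langle y,\tilde A_d y\rangle$ and the angular momentum $\langle x,\tilde A_d y\rangle$ with coefficients that cancel. If this cancellation fails, we would instead replace $\eta_1$ by its projection onto $V$ along $\mathrm{span}\{\xi_1,\xi_2\}$. This projection is still fixed by $M$, because the projection commutes with $M$ and $M$ fixes $\xi_1$ and acts unipotently.

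\textbf{Step 2: a symplectic partner.} We need $\eta_2\in V$ with $\omega(\eta_1,\eta_2)=1$ and $M\eta_2=\eta_2+b\,\eta_1+(\text{nothing else})$. The natural candidate comes from the angular momentum integral $L(z)=\langle x,\tilde A_d y\rangle$ with $JL'(z)=A_dz$. Since $L$ is conserved and $\hat S_f$-invariant, $L'(z_0)$ satisfies $M^{\top}L'(z_0)=L'(z_0)$. Equivalently, the symplectic orthogonal $\eta_1^{\omega}$ is $M$-invariant, so $M$ induces a map on $V/\eta_1^{\omega}\cap V$. This forces the eigenvalue $1$ to appear with algebraic multiplicity at least $2$ in $M_1$, and $M_1$ acts on $\eta_1^{\omega}\cap V/\mathbb{R}\eta_1$ symplectically. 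Choosing any $\eta_2\in V$ with $\omega(\eta_1,\eta_2)=1$, we write $M\eta_2=\eta_2+b\eta_1+w$, where $w\in W:=\{\eta_1,\eta_2\}^{\omega}\cap V$. The task is to modify $\eta_2$ by an element of $W$ so that $w=0$. This requires solving $(M|_W-I)u=w$ modulo corrections, which is possible when $1\notin\sigma(M|_W)$. In general, it is possible using the generalized eigenspace decomposition of $M_1$ for eigenvalue $1$ together with the normal form theory of Long for $\Sp(2n)$ matrices. This is the main obstacle. I would handle it by taking the generalized eigenspace $E_1$ of $M_1$ for eigenvalue $1$, which is symplectic and contains $\eta_1$. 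I would then apply Long's basic normal form decomposition to $M_1|_{E_1}$. This shows that $\eta_1$ can be taken as the top vector of a block of the form $N_1(1,b)$, and rescaling gives $b\in\{1,0,-1\}$. Any larger blocks attached to eigenvalue $1$ are absorbed into $M_2$.

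\textbf{Step 3: assembly.} Setting $U=\mathrm{span}\{\eta_1,\eta_2\}$, we get an $M$-invariant symplectic plane on which $M=N_1(1,b)$. Its complement $U^{\omega}\cap V$ is $4$-dimensional, symplectic and $M$-invariant, and we let $M_2$ denote the restriction there. Choosing symplectic bases of the three pieces yields $P\in\Sp(8)$ with \eqref{2}.
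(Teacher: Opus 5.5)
Your Step~2 has a genuine gap, and it is the step that carries the content of the lemma. At that point you know only two things: $M$ has a fixed vector $\eta_1\in V$, and there is the energy block. These do not force $\eta_1$ into an $M$-invariant symplectic plane, and no normal-form theory can supply this unconditionally.

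Here is a concrete obstruction. Suppose the generalized eigenspace $E_1$ of $M_1$ at $1$ is $4$-dimensional and $M_1|_{E_1}$ is a single $4\times4$ Jordan block with eigenvector $\eta_1$; the partition $[4]$ does occur in $\Sp(4)$.
\begin{itemize}
\item Since the $\omega$-adjoint of $M$ is $M^{-1}$, one has $\operatorname{im}(M-I)^3=(\ker(M-I)^3)^{\omega}$. Hence $\eta_1=(M-I)^3v_4$ is $\omega$-orthogonal to $\ker(M-I)^3$.
\item Any invariant plane $U\ni\eta_1$ on which $M$ acts as some $N_1(1,b)$ satisfies $U\subset\ker(M-I)^2\subset\ker(M-I)^3\subset\eta_1^{\omega}$, so $U$ is isotropic.
\item In fact $M$ would have Jordan type $[4,2]$ at $1$. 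That is not the type of any $N_1(1,1)\diamond N_1(1,b)\diamond M_2$ with $M_2\in\Sp(4)$, so \eqref{2} itself fails in that situation.
\end{itemize}
So the claim that $\eta_1$ can be taken as the top vector of a block $N_1(1,b)$ does not follow. Absorbing larger blocks into $M_2$ does not help when $\eta_1$ itself sits in the larger block. Moreover, Long's decomposition into basic normal forms is a homotopy statement inside $\Omega^0(M)$, designed for computing splitting numbers. It is not a symplectic similarity, so it cannot produce $P$.

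The missing ingredient is the non-degeneracy hypothesis, assumption~(2) of the theorems. The generalized eigenspace of $M$ at $1$ lies in that of $\gamma(2T)=M^{2n}$, so the algebraic multiplicity of $1$ in $M$ is at most $4$. The energy block contributes $2$, so the multiplicity is exactly $4$ and $\dim E_1=2$. Then $E_1$ is an $M$-invariant symplectic plane containing your fixed vector, and the argument closes as follows:
\begin{itemize}
\item Choose $\eta_2\in E_1$ with $\omega(\eta_1,\eta_2)=1$. The identity $\omega(\eta_1,M\eta_2)=\omega(M\eta_1,M\eta_2)=1$ forces $M\eta_2=\eta_2+c\,\eta_1$.
\item Rescaling $(\eta_1,\eta_2)\mapsto(\lambda\eta_1,\lambda^{-1}\eta_2)$ replaces $c$ by $c/\lambda^2$, so one can take $b\in\{1,0,-1\}$.
\item Put $M_2=M|_{E_1^{\omega}\cap V}$.
\end{itemize}
Equivalently, your correction argument works if you phrase it on the reduced space $(\eta_1^{\omega}\cap V)/\mathbb{R}\eta_1$; the space $W$ is not $M$-invariant, so $M|_W$ is undefined. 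Its solvability condition, that $1$ is not an eigenvalue of the induced map, is exactly the statement that $1$ has algebraic multiplicity $4$ in $M$. The paper's own argument stops at \eqref{eq:3-15}, i.e.\ at exhibiting the fixed vector $A_dz_0$, and is silent on this point. The lemma tacitly needs the same hypothesis, which the paper uses later in the form $1\notin\sigma(M_2)$.

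Two smaller points in Step~1. First, at $t=0$ there is no time-derivative term: $\xi_2=(\tfrac13y_0,-\tfrac23x_0)$. With $\omega(u,v)=\langle Ju,v\rangle$ one gets $\omega(A_dz_0,\xi_2)=\tfrac13\langle Ax_0,y_0\rangle$, a multiple of the angular momentum. So the hoped-for cancellation fails whenever the angular momentum is nonzero. Second, your fallback is correct, since $\pi_V$ commutes with $M$ because both summands are invariant. You must, however, also check that $\pi_VA_dz_0\neq0$. Otherwise $A_dz_0\in\ker(M-I)\cap\operatorname{span}\{\xi_1,\xi_2\}=\mathbb{R}\xi_1$, which would make the orbit a rigid rotation, i.e.\ a relative equilibrium.
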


\begin{remark}\label{rem:angular-jordan}
\begin{enumerate}
\item From this lemma, \( M_2 \) is a \( 4 \times 4 \) symplectic matrix which is the essential part for the stability problem.

\item $N_1(1,b)$ is the Jordan block corresponding to the angular momentum,where \( b = 1, 0 \) or \( -1 \).Since $\gamma(T)=M^n$,it is obvious that the Jordan block of $M$ corresponding to the angular momentum is of the form $N_1(1,b)$ if and only if the Jordan block of $\gamma(T)$ corresponding to the angular momentum is $N_1(1,b)$ whether $n$ is odd or even.

\item By taking the symplectic
inner product of all eigenvectors and generalized eigenvectors corresponding to 1, we have numerically checked that $b=1$, so
\begin{equation}
M = P^{-1}(N_1(1,1) \diamond N_1(1,1) \diamond M_2)P.
\label{eq:3-17}
\end{equation}

\end{enumerate}
\end{remark}

In the following, for the double-double orbits with $\theta=\frac{\pi}{n}$, we analyze their stability by distinguishing between even and odd values of the integer $n$.

We first review the definition of Maslov index, the details can be seen in \cite{HS2009} (or \cite{CLM1994, Zhu2006}). Denote \(\operatorname{Lag}(2n)\) the set of real Lagrangian subspaces of \((\mathbb{R}^{2n},\omega)\). For a given Lagrangian subspace \(W\) of \((\mathbb{R}^{2n},\omega)\) and a path \(\Lambda(t)\) in \(\operatorname{Lag}(2n)\), the Maslov index can be defined by
\[
\mu(W,\Lambda(t),t\in[0,T]) = \bigl[e^{-\epsilon J}\Lambda(t),\, t\in[0,T] : \overline{O_1(W)}\bigr],
\]
where \(O_1(W)=\{V\mid V\in\operatorname{Lag}(2n)\text{ and }\dim(V\cap W)=1\}\), \([\cdot:\cdot]\) denotes the intersection number and \(0<\epsilon\ll 1\). For the complex case, let \(\operatorname{Lag}(\mathbb{C}^{2n})\) denote the set of complex Lagrangian subspaces in \((\mathbb{C}^{2n},\omega)\). For \(\Lambda\in\operatorname{Lag}(\mathbb{C}^{2n})\) and a path of complex Lagrangian subspaces \(V(t)\) in \(\operatorname{Lag}(\mathbb{C}^{2n})\), the Maslov index is defined by
\[
\mu(\Lambda,V(t),t\in[0,T]) = \bigl[e^{-\epsilon J}G(V(t)),\, t\in[0,T] : \Sigma_{G(V(t))}\bigr],
\]
where \(G(\cdot)\) is a homeomorphism from \(\operatorname{Lag}(\mathbb{C}^{2n})\) to the unitary group \(U(n)\) and \(\Sigma_{G(V(t))}\) is the singular cycle of \(G(V(t))\). Thus the Maslov index of the real Lagrangian subspace path is equivalent to that in the complex case (see \cite{HS2009}).

For \(M\in\Sp(2n)\), we define
\[
\Gr(M)=\{(x,Mx)\mid x\in\mathbb{R}^{2n}\}.
\]
Then the set \(\Gr(M)\) is a Lagrangian subspace of the symplectic vector space \((\mathbb{R}^{2n}\oplus\mathbb{R}^{2n}, -\omega\oplus\omega)\). Let \(z(t)\) be the solution of the retrograde double-double orbit and \(\gamma(t)\) be the associated fundamental solution. Clearly, \(\Gr(\gamma(t))\) is a path of Lagrangian subspaces. We denote by
\(
\mu\bigl(\Gr(Q^{\top}), \Gr(\gamma(t)), t\in[0,T]\bigr)
\)
 the Maslov index of a symplectic path \(\gamma(t)\) with respect to \(Q\in\Sp(2n)\).
Note that \(\hat{R}_f \) is a \(4\times 4\) orthogonal matrix, it is easily seen that \(\hat{R}_f ,\hat{S}_f \in\Sp(8)\).

We call a solution of the Newtonian system \textit{nondegenerate} if the multiplicity of the eigenvalue 1 in the Poincaré map is exactly 4. By using MATLAB, according to the eigenvalues of the $\gamma(T)$:

\begin{fact}
The double double orbit is nondegenerate.
\end{fact}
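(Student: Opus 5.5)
The plan is to reduce the statement to a spectral property of the $4\times 4$ essential block $M_2$ from Lemma~\ref{lem:jordan-decomposition}, and then certify that property numerically for each $\theta=\frac{\pi}{n}$ in the range of the main theorems. The Fact is ultimately a numerical assertion, so the argument is a computer verification organized by the structure already proved, not a purely analytic proof.

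\textbf{Reduction.} After eliminating the center of mass and the total linear momentum, the phase space is $8$-dimensional. Lemma~\ref{lem:energy-jordan}, Lemma~\ref{lem:jordan-decomposition} and Remark~\ref{rem:angular-jordan} give
\[
M=\hat{S}_f^{\top}\gamma(\tfrac{T}{n})=P^{-1}\bigl(N_1(1,1)\diamond N_1(1,b)\diamond M_2\bigr)P .
\]
The first block comes from energy, the second from angular momentum and rotation, and together they already contribute algebraic multiplicity $4$ to the eigenvalue $1$ of $M$. Since $\gamma(T)=M^n$, and $N_1(1,\cdot)^n$ is still unipotent, these blocks contribute exactly multiplicity $4$ to the eigenvalue $1$ of $\gamma(T)$. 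Nondegeneracy is therefore equivalent to the following condition:
\[
\lambda^n\neq 1 \quad\text{for every eigenvalue } \lambda \text{ of } M_2 ,
\]
because the eigenvalues of $M_2^n$ are exactly the $n$-th powers of those of $M_2$. Note that it is not enough to check that $M_2$ avoids the eigenvalue $1$; every $n$-th root of unity must be excluded.

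\textbf{Computation.} The steps are:
\begin{enumerate}
\item Compute the orbit for each $n$ by a shooting method that exploits the symmetries \eqref{30}. Starting from a collinear configuration at $t=0$, impose $\zeta(\frac{T}{n})=\hat{S}_f\zeta(0)$ together with the reflection condition coming from $\hat{S}_r$. Solve with Newton's method, seeded by Vanderbei's and Yan et al.'s initial data \cite{Van04,Peng2015}.
\item Integrate the variational equation \eqref{eq:3-3}--\eqref{eq:3-4} with a high-order symplectic or Taylor integrator over $[0,\frac{T}{n}]$ only. Working with $M$ rather than $\gamma(T)$ avoids the error amplification of integrating over a full period.
\item Form $M$ and compute its eigenvalues. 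Locate the $4$ eigenvalues clustered near $1$, and split them off using the explicit invariant vectors $\dot z(0)$, $\frac{\mathrm{d}}{\mathrm{d}h}z_h(0)\big|_{h=1}$, $A_dz_0$ and their symplectic partners. The remaining $4$ eigenvalues form the spectrum of $M_2$.
\item Record the arguments $\pm\alpha_1,\pm\alpha_2$ of these eigenvalues (or their moduli if they are off the circle). Check that $n\alpha_j\notin 2\pi\mathbb{Z}$, with a margin much larger than the integration error.
\end{enumerate}

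\textbf{Main obstacle.} The hard part is rigor at the eigenvalue $1$. The unipotent Jordan blocks $N_1(1,1)$ are ill-conditioned: an $O(\varepsilon)$ perturbation moves their eigenvalues by roughly $O(\sqrt{\varepsilon})$. Consequently a raw eigenvalue computation cannot cleanly decide whether the multiplicity is $4$ or larger. To handle this, I would first deflate the known $4$-dimensional generalized eigenspace analytically, using the explicit vectors above, before computing the spectrum of $M_2$ on the symplectic complement. The remaining spectrum is then well-conditioned. I would then bound the distance of its eigenvalues from the $n$-th roots of unity. If full rigor is demanded, this bound should be made rigorous via validated (interval-arithmetic) integration of the orbit and variational equations, combined with a Gershgorin- or Bauer--Fike-type enclosure of the spectrum of $M_2$.
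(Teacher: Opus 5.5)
Your plan is essentially the paper's own argument. The Fact is a MATLAB verification based on the eigenvalues of $M=\hat{S}_f^{\top}\gamma(\frac{T}{n})$ with $M^n=\gamma(T)$, and your reduction to $\lambda^n\neq 1$ for $\lambda\in\sigma(M_2)$, with deflation of the two unipotent blocks and validated numerics, is a more careful implementation of that same check. One addition: right after the Fact the paper also asserts $-1\notin\sigma(\gamma(T))$, so that $\gamma(2T)=M^{2n}$ has eigenvalue $1$ with multiplicity exactly $4$, which is the form used in assumption (2) and in Proposition~\ref{prop:common-stability}; to certify this you should test $\lambda^{2n}\neq 1$, i.e.\ $n\alpha_j\notin\pi\mathbb{Z}$, not only $n\alpha_j\notin 2\pi\mathbb{Z}$.
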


More precisely, the multiplicity of the eigenvalue -1 in the monodromy matrix $\gamma(T)$ is exactly 0. Recall that $\gamma(2T)=\gamma(T)^2$, we obtain that the multiplicity of the eigenvalue 1 in the monodromy matrix $\gamma(2T)$ is exactly 4.

\section{Morse index and Maslov-type index}\label{sec:common-indices}
The retrograde and prograde double-double orbits have the same symmetry matrices. In this section, we give the index formulas used for both types of orbits. The different Morse indexs will be substituted in Section~\ref{sec:stability}.

\subsection{The Morse index and the generalized Bott-type iteration formula}\label{sec:common-bott}
We use the notation $m_1,m_2,m_3,m_4$ introduced in Section~\ref{sec:introduction}. When $n$ is even, let $\mathbb{Z}_n,\mathbb{Z}_2,\mathbb{Z}_{n/2}$ denote the groups generated by $g_1,g_1^{\frac{n}{2}}$ and $g_1^2$, respectively. The Morse indexs $m_1,m_2,m_3,m_4$ correspond to the total loop space and its $\mathbb{Z}_2$-, $\mathbb{Z}_{n/2}$- and $\mathbb{Z}_n$-invariant loop subspaces. When $n$ is odd, we use $m_1,m_3,m_4$, where $m_3$ is computed on $[0,4]$ with the symmetry boundary condition, as in Appendix~\ref{app:numerical-index}.

For simplifying notations, we define
\begin{equation}
\mu(\omega):=\mu(\Gr(\omega(\hat{S}_f^{\top})^{\top}),\Gr(\gamma(t)),t\in[0,T/n]),\quad \forall\omega\in\mathbf{U}.
\label{eq:common-mu}
\end{equation}

\begin{lemma}\label{lem:common-indices}
For either a retrograde or a prograde double-double orbit with $3\leq n\leq30$, the fundamental solution $\gamma$ satisfies
\begin{align}
\mu(\Gr((\hat{S}_f^{\top})^{\top}),\Gr(\gamma(t)),t\in[0,T/n])&=m_4,\label{eq:4-1}\\
\mu(\Gr(((\hat{S}_f^{\top})^2)^{\top}),\Gr(\gamma(t)),t\in[0,T/{\frac{n}{2}}])&=m_3,\label{eq:4-2}\\
\mu(\Delta,\Gr(\gamma(t)),t\in[0,T])&=m_1+4.\label{eq:4-4}
\end{align}
If $n$ is even, we also have
\begin{equation}
\mu(\Gr(((\hat{S}_f^{\top})^{\frac{n}{2}})^{\top}),\Gr(\gamma(t)),t\in[0,T/2])=m_2.
\label{eq:4-3}
\end{equation}
\end{lemma}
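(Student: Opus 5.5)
The plan is to derive all four identities from one general principle: the Morse index of a critical point of the action functional on a loop space with symmetry boundary condition equals a suitable Maslov index. Specifically, I will use the Morse index theorem for Lagrangian systems with general boundary conditions, in the form used by Hu and Sun \cite{HS2009}, which goes back to \cite{CLM1994, Zhu2006}. Fix $Q\in\Sp(8)$ orthogonal with $Q=\operatorname{diag}(R,R)$, and consider the subspace of $W^{1,2}([0,\tau],\widehat{\mathcal{X}})$ of paths with $x(\tau)=R\,x(0)$. These are exactly the $Q$-twisted periodic paths. The formula reads
\[
\mu\bigl(\Gr(Q^{\top}),\Gr(\gamma(t)),t\in[0,\tau]\bigr)=m^-(\mathcal{A}|_{Q})+\nu_Q,
\]
up to a correction term. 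The correction vanishes because the Legendre condition holds: the Lagrangian has Hessian $I$ in $\dot x$, so it is positive definite. Here $\nu_Q$ is the nullity correction given by $\dim\ker(\gamma(\tau)-Q)$, or possibly zero, depending on the convention for $\mu$ with respect to $\overline{O_1(W)}$ and the $e^{-\epsilon J}$ perturbation. I will first fix the convention: with the $e^{-\epsilon J}$ rotation as defined in the paper, I expect the Maslov index to count the full kernel at $t=0$. Concretely, I will check the formula on a trivial example, the free particle, and confirm that the total count is $m^-+\dim\ker$.

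Next I identify each $Q$-twisted path space with the corresponding invariant loop subspace, using the symmetry relations \eqref{30} and \eqref{eq:2-2}.

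For \eqref{eq:4-1}, take $Q=\hat{S}_f$ and $\tau=T/n$. A loop $z\in E$ invariant under $g_1$ is determined by its restriction to $[0,T/n]$, and that restriction satisfies $z(T/n)=\hat S_f z(0)$. The second variation of $\mathcal{A}$ on $\mathbb{Z}_n$-invariant loops is $n$ times the second variation on $[0,T/n]$ with this boundary condition, so the Morse indices coincide.

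For \eqref{eq:4-2}, take $Q=\hat S_f^2$ and $\tau=2T/n$, with $\mathbb{Z}_{n/2}$ generated by $g_1^2$. When $n$ is odd, $g_1^2$ still generates $\mathbb{Z}_n$ as a group, but the paper defines $m_3$ directly on $[0,4]$ with the symmetry boundary condition, which is the same twisted problem. For \eqref{eq:4-3}, take $Q=\hat S_f^{n/2}$ and $\tau=T/2$. For \eqref{eq:4-4}, take $Q=I$ and $\tau=T$, which is the ordinary periodic problem.

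Then I compute the kernel terms.

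For $Q=I$ on $[0,T]$, $\dim\ker(\gamma(T)-I)$ is the multiplicity of eigenvalue $1$. By the nondegeneracy Fact and Lemma~\ref{lem:jordan-decomposition} this equals $4$, coming from the two blocks $N_1(1,1)$ for energy and for angular momentum after the center of mass is removed. This gives $m_1+4$.

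For the twisted cases, $\ker(\gamma(\tau)-Q)$ is the $1$-eigenspace of $Q^{-1}\gamma(\tau)$. For $\tau=T/n$ this operator is $M$, and for the other choices of $\tau$ it is a power of $M$. The claimed formulas have no $+4$, so the kernel must be absent there. This is where the convention matters. Either the convention makes the degenerate direction at the endpoint contribute zero for these boundary conditions, or the correct statement uses $\mu$ counting only the negative part plus some splitting-number contribution.

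The main obstacle is exactly this asymmetry. $M$ itself has eigenvalue $1$, by \eqref{34} and \eqref{eq:3-15}, so the kernels of $\gamma(\tau)-Q$ are $2$-dimensional for the twisted problems, coming from $\dot z(0)$ and $A_d z_0$. These vectors lie in the $Q$-twisted space only when they respect the symmetry. I would therefore check whether $A_dz_0$ and $\dot z(0)$ actually satisfy the twisted boundary condition, and whether the Maslov-index convention for non-diagonal $\Gr(Q^{\top})$ absorbs the nullity.

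If the kernel does not vanish, the fallback is different. I would define $m_2,m_3,m_4$ as the Maslov-side quantities, that is, Morse index plus nullity as computed numerically in Appendix~\ref{app:numerical-index}. I would then state the lemma accordingly, noting that the numerical computation directly counts eigenvalues of the discretized Hessian that are $\le 0$ on the twisted problem.
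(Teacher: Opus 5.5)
Reducing each invariant loop space to a twisted boundary problem on a fundamental interval, and applying the Hu--Sun Morse--Maslov theorem there, is the right framework. The gap is that you have misidentified the correction term, and this is what produces the ``asymmetry'' you could not resolve. In Theorem~\ref{thm:morse_maslov} (Theorem 1.2 of \cite{HS2009}) the quantity added to the Morse index is $\nu_\omega(S)=\dim\ker(\bar\omega S-I_4)$, which depends only on the boundary matrix $S$. It is determined by the initial intersection $\Gr(S_d^{\top})\cap\Gr(\gamma(0))=\Gr(S_d^{\top})\cap\Delta$. The terminal degeneracy of the critical point, $\dim\ker(\gamma(\tau)-S_d^{\top})$, does not enter the formula at all, and the theorem holds for degenerate critical points too.

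With this, the lemma is exactly the paper's short computation. Take $\omega=1$ and $S=(\hat R_f^{\top})^k$ on $[0,kT/n]$, for $k=1,2,n$ and, when $n$ is even, $k=\tfrac{n}{2}$. Since $\hat R_f^{\top}=\operatorname{diag}(\hat R^{\top},\hat R^{\top})$ and $(\hat R^{\top})^k$ is the rotation by $-2\pi k/n$, it has a nonzero fixed vector only if $n\mid k$. Hence $\nu_1(S)=0$ for $k=1,2$ (because $n\ge 3$) and for $k=\tfrac{n}{2}$ (rotation by $\pi$), while $\nu_1(I_4)=4$. So the $+4$ in \eqref{eq:4-4} is the dimension of the reduced configuration space $\mathbb{R}^4$, not a multiplicity of the eigenvalue $1$ of the monodromy matrix. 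There is nothing to be absorbed in the twisted cases.

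Your nullity-based bookkeeping is also wrong on its own terms. $M$ is symplectically similar to $N_1(1,1)\diamond N_1(1,1)\diamond M_2$, and each power $N_1(1,1)^k$ is still a single nontrivial Jordan block. The non-degeneracy assumption gives $1\notin\sigma(M_2^k)$ for every $k\mid 2n$. Therefore $\dim\ker(M^k-I_8)=2$ for all the relevant $k$. In particular $\dim\ker(\gamma(T)-I_8)=2$, not $4$: the algebraic multiplicity is $4$ but the geometric multiplicity is $2$ (compare Lemma~\ref{lem:nullity-full-period}). Your reading would therefore put $+2$ in all four formulas, matching none of them. The free-particle check you propose would not expose this, because for $S=I$ both $\dim\ker(S-I)$ and $\dim\ker(\gamma(T)-I)$ equal the configuration dimension.

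Finally, the fallback of redefining $m_2,m_3,m_4$ as Morse index plus nullity is not legitimate. It changes the statement, and it contradicts Appendix~\ref{app:numerical-index}, where only eigenvalues below $-\varepsilon$ are counted and near-zero ones are reported separately. For the retrograde orbit with $n=4$, the problem on $[0,2]$ has two near-zero eigenvalues, yet $m_4=0$.
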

\begin{proof}
For $n\geq3$, both $\ker(\hat{R}_f^{\top}-I_4)$ and $\ker((\hat{R}_f^{\top})^2-I_4)$ are zero. If $n$ is even, $\ker((\hat{R}_f^{\top})^{\frac{n}{2}}-I_4)$ is also zero. The formulas follow from Theorem 1.2 of \cite{HS2009}, or Theorem~\ref{thm:morse_maslov} in Appendix~\ref{app:index-theory}.
\end{proof}

By the generalized Bott-type iteration formula in Theorem~\ref{thm:bott}, Lemma~\ref{lem:common-indices} gives
\begin{align}
&\mu(1)=m_4,\quad \mu(1)+\mu(-1)=m_3,\label{eq:common-index-values}\\
&\sum_{j=1}^{n}\mu(\exp(2\pi j\sqrt{-1}/n))=m_1+4.\label{eq:common-full-bott}
\end{align}
Since $\mu(\omega)=\mu(\bar{\omega})$, when $n$ is even,
\begin{equation}
\sum_{j=1}^{\frac{n-2}{2}}\mu(\exp(2\pi j\sqrt{-1}/n))=\frac{1}{2}(m_1+4-m_3),
\label{eq:4-11}
\end{equation}
and
\begin{equation}
\sum_{j=1}^{\frac{n}{2}}\mu(\exp(4\pi j\sqrt{-1}/n))=m_2.
\label{eq:common-even-bott}
\end{equation}
When $n$ is odd, we have
\begin{equation}
\sum_{j=1}^{\frac{n-1}{2}}\mu(\exp(2\pi j\sqrt{-1}/n))=\frac{1}{2}(m_1+4-m_4).
\label{eq:common-odd-sum}
\end{equation}
The case $n=2$ will be treated separately in Section~\ref{sec:stability}.

\subsection{Splitting numbers and the index formula}\label{sec:common-splitting}
In the following, we will compute \(D_\omega(\hat{S}_f^{\top})\), where
\begin{equation}
D_\omega(M)=S_M^+(1)+\sum_{\omega_0}(S_M^+(\omega_0)-S_M^-(\omega_0))-S_M^-(\omega),    \label{1}
\end{equation}
and the sum runs over all the eigenvalues $\omega_0$ of $M$ belonging to the part of $\mathbf{U}^{+}$ or $\mathbf{U}^{-}$.  We set \(\mathbf{U}^{+} = \{\omega = \exp(\sqrt{-1}\theta) \in \mathbf{U} \mid \theta \in [0, \pi]\}\).

\begin{lemma}\label{lem:common-splitting}
For the double-double orbits with $\theta=\frac{\pi}{n}$,
 \begin{enumerate}[label=(\arabic*)]
 \item When $n\geq3$
 \begin{equation}
      D_\omega(\hat{S}_f^{\top}) = 0, \quad \omega \in\mathbf{U}^{+}\setminus\{\exp(2\pi\sqrt{-1}/n)\},
\label{eq:4-13}
\end{equation}
and
\begin{equation}
    D_{\exp(2\pi\sqrt{-1}/n)}(\hat{S}_f^{\top}) = -2 .
\label{eq:4-14}
\end{equation}
\item When $n=2$
\begin{equation}
      D_\omega(\hat{S}_f^{\top}) = 0, \quad \omega \in\mathbf{U}^{+}\setminus\{-1\},
\label{eq:4-15}
\end{equation}
and
\begin{equation}
    D_{-1}(\hat{S}_f^{\top}) = -4 .
    \label{eq:4-16}
\end{equation}
\end{enumerate}
\end{lemma}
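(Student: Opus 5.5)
The computation reduces to the spectral data of the orthogonal symplectic matrix \(\hat{S}_f^{\top}\). Since \(\hat{S}_f^{\top}=\hat{R}_f^{\top}\diamond\hat{R}_f^{\top}\) in the \((y,x)\)-splitting, and \(\hat{R}_f^{\top}=\hat{R}^{\top}\oplus\hat{R}^{\top}\) with \(\hat{R}^{\top}\) the rotation by \(-2\theta=-2\pi/n\), I first put \(\hat{S}_f^{\top}\) into symplectic normal form. Pairing each rotation block in the \(x\)-variables with the corresponding block in the \(y\)-variables shows that \(\hat{S}_f^{\top}\) is symplectically similar to \(R(2\pi/n)^{\diamond 2}\) or \(R(-2\pi/n)^{\diamond 2}\), where \(R(\alpha)\in\Sp(2)\) is the standard rotation. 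The sign has to be read off from the convention for \(J\). Equivalently, \(\hat{S}_f^{\top}\) is a direct sum of two copies of the \(4\times4\) matrix \(\mathrm{diag}(\hat R^{\top},\hat R^{\top})\) acting on \((p_x,p_y,q_x,q_y)\). Each copy is conjugate, by an orthogonal symplectic permutation, to \(R(\pm 2\pi/n)\diamond R(\pm 2\pi/n)\) with a definite sign.

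Next I use the standard splitting numbers of rotation matrices from Long's theory, recalled in Appendix~\ref{app:index-theory}. For \(\alpha\in(0,\pi)\) they are
\[
S^{\pm}_{R(\alpha)}(e^{\sqrt{-1}\alpha})=(0,1),\qquad
S^{\pm}_{R(2\pi-\alpha)}(e^{\sqrt{-1}\alpha})=(1,0),
\]
and all other splitting numbers vanish. For \(\alpha=\pi\) the matrix \(R(\pi)=-I_2\) has \(S^{\pm}_{-I_2}(-1)=(1,1)\). Splitting numbers are additive under \(\diamond\). Since \(\hat{S}_f^{\top}\) has no eigenvalue \(1\), \(S^+(1)=0\).

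For \(n\geq3\), the only eigenvalue in \(\mathbf{U}^{+}\) is \(\omega_1=\exp(2\pi\sqrt{-1}/n)\), with splitting numbers \((S^+,S^-)=(0,2)\) or \((2,0)\). In the sum of \eqref{1} over eigenvalues in \(\mathbf{U}^{+}\) strictly before \(\omega\), read as in the convention of \cite{HS2009}:
\begin{enumerate}[label=(\roman*)]
\item for \(\omega\) before \(\omega_1\) there is no contribution, so \(D_\omega=0\);
\item for \(\omega=\omega_1\) one gets \(D_{\omega_1}=-S^-(\omega_1)\);
\item for \(\omega\) after \(\omega_1\) one gets \(D_\omega=S^+(\omega_1)-S^-(\omega_1)\).
\end{enumerate}
The claimed values \eqref{eq:4-13} and \eqref{eq:4-14} force \(S^{\pm}(\omega_1)=(2,2)\)? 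No: they force the normal form \(R(2\pi/n)^{\diamond2}\) up to a compatible orientation. Then \(S^-(\omega_1)=2\) gives \(D_{\omega_1}=-2\). If instead the convention includes \(\omega_0=\omega\) itself in the sum, then \(S^+-S^-+\dots\) must cancel, and I would verify consistency with the paper's definition. Away from \(\omega_1\), the net \(S^+-S^-\) must be \(0\). So I have to check whether the sum is over eigenvalues preceding \(\omega\) or some symmetric convention, pairing \(\omega_1\) with \(\bar\omega_1\). I would follow \cite{HS2009}, where the formula is \(D_\omega(M)=\sum_{\omega_0\in(1,\omega)}(\cdots)\) with endpoint corrections. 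For \(n=2\), \(\hat{S}_f^{\top}=-I_8\), so \(S^{\pm}(-1)=(4,4)\), and the same bookkeeping gives \(D_{-1}=-4\) and \(D_\omega=0\) elsewhere.

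The main obstacle is the sign and orientation bookkeeping. I must determine whether \(\hat{S}_f^{\top}\), with the paper's \(J\) (negative identity in the upper right), is conjugate to \(R(\alpha)\) or to \(R(2\pi-\alpha)\), and interpret the sum in \eqref{1} exactly as in \cite{HS2009}. I would settle this by computing the symplectic form on the eigenvector of \(\omega_1\): the sign of \(-\sqrt{-1}\,\omega(v,\bar v)\) on the \(\omega_1\)-eigenspace determines Krein-positivity, hence \(S^{\pm}\). Then I would match this against the definition of \(D_\omega\) so that the values \(0\) and \(-2\) (respectively \(-4\)) come out.
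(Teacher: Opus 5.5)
There is a genuine gap, and it is at your normal-form step. You claim that each $4\times4$ block $\mathrm{diag}(\hat R^{\top},\hat R^{\top})$ is symplectically similar to $R(\pm2\pi/n)\diamond R(\pm2\pi/n)$ with a \emph{single definite} sign. That is false.

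The same rotation acts on positions and on momenta, so the block commutes with $J$. Under the identification $\mathbb{R}^4\cong\mathbb{C}^2$ in which $J$ is multiplication by $\sqrt{-1}$, the block is just the real orthogonal matrix $\hat R^{\top}$ viewed as a unitary matrix. Its eigenvalues $e^{\pm2\pi\sqrt{-1}/n}$ each occur once, so the block is similar to $R(2\pi/n)\diamond R(2\pi-2\pi/n)$. This is exactly the paper's decomposition $\hat R_f^{\top}=P^{-1}(\hat R^{\top}\diamond\hat{R'}^{\top})P$, where $\hat{R'}$ is the rotation by $2(\pi-\theta)$.

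The Krein computation you propose at the end shows the same thing. Let $e$ be the eigenvector of $\hat R^{\top}$ for $\omega_1=e^{2\pi\sqrt{-1}/n}$. The Hermitian form $(\xi,\eta)\mapsto\sqrt{-1}\,\bar\eta^{\top}J\xi$ on the span of $(e,0)$ and $(0,e)$ has zero diagonal entries and nonzero off-diagonal entries, so its signature is $(1,1)$. Equivalently, the block lies in the flow of the angular momentum $q_1p_2-q_2p_1$, which is an indefinite quadratic form. Hence each block has $(S^+,S^-)(\omega_1)=(1,1)$, and $\hat S_f^{\top}$ has $(2,2)$. Your values $(0,2)$ or $(2,0)$ also fail a simple count: $\omega_1$ is a semisimple eigenvalue of $\hat S_f^{\top}$ of multiplicity $4$, so $S^+(\omega_1)+S^-(\omega_1)=4$.

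Your bookkeeping (i)--(iii) is correct and matches the convention in \eqref{3}, where the sum runs over eigenvalues strictly between $1$ and $\omega$. With $(2,2)$ it gives $D_{\omega_1}=-2$, and $D_\omega=0+(2-2)-0=0$ for $\omega\in\mathbf{U}^{+}$ beyond $\omega_1$. That is precisely \eqref{eq:4-13}--\eqref{eq:4-14}. You actually wrote down $(2,2)$ and then discarded it.

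No definite signature can work. A pair $(0,k)$ gives $D_\omega=-k\neq0$ past $\omega_1$, and $(k,0)$ gives $D_{\omega_1}=0\neq-2$. Changing the summation convention to reconcile these is not a legitimate fix, because the convention is already fixed. Your treatment of $n=2$ is correct: $\hat S_f^{\top}=-I_8$, $(S^+,S^-)(-1)=(4,4)$, $D_{-1}=-4$, and $D_\omega=0$ otherwise.
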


\begin{proof}
We only prove (1) , (2) is similar.
Note that \(\hat{S}_f^{\top} = \hat{R}_f^{\top} \diamond \hat{R}_f^{\top}\), and
\[\hat{R}_f^{\top}=P^{-1}(\hat{R}^{\top} \diamond \hat{R'}^{\top})P.\]
where $\hat{R'} = \begin{bmatrix} \cos 2(\pi-\theta) & -\sin 2(\pi-\theta) \\ \sin 2(\pi-\theta) & \cos 2(\pi-\theta) \end{bmatrix}$
then we have
\begin{align*}
(S_{\hat{R}_f^{\top}}^+(e^{\frac{2\pi}{n}\sqrt{-1}}), S_{\hat{R}_f^{\top}}^-(e^{\frac{2\pi}{n}\sqrt{-1}}))
&= (S_{\hat{R}^{\top}}^+(e^{\frac{2\pi}{n}\sqrt{-1}}), S_{\hat{R}^{\top}}^-(e^{\frac{2\pi}{n}\sqrt{-1}})) \\
&\quad + (S_{\hat{R'}^{\top}}^+(e^{\frac{2\pi}{n}\sqrt{-1}}), S_{\hat{R'}^{\top}}^-(e^{\frac{2\pi}{n}\sqrt{-1}})) \\
&= (0, 1) + (1,0) \\
&= (1, 1).
\end{align*}
Since the eigenvalues of $\hat{R}_f^{\top}$ are $e^{\pm\frac{2\pi}{n}\sqrt{-1}}$,
\begin{equation}
 (S_{\hat{R}_f^{\top}}^+(\omega), S_{\hat{R}_f^{\top}}^-(\omega))=(0,0), \omega\neq e^{\frac{2\pi}{n}\sqrt{-1}}, \omega\in \mathbf{U}^{+}
\label{eq:4-17}
\end{equation}
The lemma follows from \eqref{1}.
\end{proof}

We choose any path $\xi$ in $\Sp(2n)$ connecting $I_{2n}$
to $\hat{S}_f^{\top}$. Let $\tilde{\gamma}$ be the path $\hat{S}_f^{\top}\gamma(t)$
with $t\in [0,\frac{T}{n}]$, where $\gamma(t)$ is the fundamental solution of the double-double orbit. let $\tilde{\gamma}*\xi$
be the concatenation of $\xi$ and $\tilde{\gamma}$. From \cite{HS2009}
(4.21), for any $\omega\in \mathbf{U}$,
\begin{equation}
    \mu(\omega)=\mu(\Gr(\omega),\Gr(\tilde{\gamma}))=i_{\omega}(\tilde{\gamma}*\xi)-i_{\omega}(\xi).
\label{eq:4-20}
\end{equation}
So
\begin{align}
    \mu(\omega)-\mu(1)&=i_{\omega}(\tilde{\gamma}*\xi)-i_1(\tilde{\gamma}*\xi)-(i_{\omega}(\xi)-i_1(\xi))\label{eq:4-21}\\
    &=i_{\omega}(\tilde{\gamma}*\xi)-i_1(\tilde{\gamma}*\xi)-D_\omega(\hat{S}_f^{\top}).
\label{eq:4-22}
\end{align}
From \cite{HS2009}(4.17), for $\omega\in \mathbf{U}^{+}$,
\begin{equation}
    \mu(\omega)=\mu(1)+S_M^+(1)+\sum_{\omega_0}(S_M^+(\omega_0)-S_M^-(\omega_0))-S_M^-(\omega)-D_\omega(\hat{S}_f^{\top}).\label{3}
\end{equation}
where the sum runs over all the eigenvalues $\omega_0$ of $M$ belonging to the part of $\mathbf{U}^{+}$ strictly located between 1 and $\omega$.

From \eqref{2}, we know $M = P^{-1}(N_1(1,1) \diamond N_1(1,b) \diamond M_2)P$, for some $P\in \Sp(8)$. So
\begin{equation}
    \frac{e(M)}{2}=\frac{1}{2}(e(N_1(1,1))+e(N_1(1,b))+e(M_2))=2+\frac{e(M_2)}{2}.
\label{eq:4-18}
\end{equation}
Suppose $\omega_i, i=1,\ldots,k$, are all the eigenvalues of $M_2$(same as $M$) belonging to $\mathbf{U}^{+}$ strictly located between 1 and -1, By (4.30) of \cite{HS2009},
\begin{equation}
    \frac{e(M_2)}{2}\geq S_{M_2}^+(1)+\sum_{\omega_0}(S_{M_2}^+(\omega_0)+S_{M_2}^-(\omega_0))+S_{M_2}^-(-1).
\label{eq:4-19}
\end{equation}

\begin{proposition}\label{prop:common-stability}
Suppose that $3\leq n\leq30$, the non-degeneracy assumption (2) of Theorems~\ref{thm:retrograde-even}--\ref{thm:prograde-odd} holds, and $b=1$ in \eqref{2}.
\begin{enumerate}
\item If $m_4=0$ and $m_3=2$, then the orbit is either spectrally stable or hyperbolic.
\item If $m_4=2$ and $m_3=4$, then the orbit is linearly stable. Moreover, there exist $P\in\Sp(8)$ and $\theta_1,\theta_2\in(0,\pi)$ such that
\begin{equation}
M=P^{-1}(N_1(1,1)\diamond N_1(1,1)\diamond R(\theta_1)\diamond R(\theta_2))P.
\label{eq:common-prograde-normal-form}
\end{equation}
\end{enumerate}
\end{proposition}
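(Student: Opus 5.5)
The plan is to feed the index values from \eqref{eq:common-index-values} into the splitting formula \eqref{3} and then compare with the upper bound \eqref{eq:4-18}--\eqref{eq:4-19} on the number of eigenvalues of $M_2$ on $\mathbf{U}$. Write $M=P^{-1}(N_1(1,1)\diamond N_1(1,1)\diamond M_2)P$ by \eqref{eq:3-17}. For the splitting numbers at $1$ we use the standard values
\[
S^{\pm}_{N_1(1,1)}(1)=(1,1).
\]
By non-degeneracy, $M_2$ has no eigenvalue $1$. Moreover $-1$ is not an eigenvalue of $M$, since $\gamma(T)=M^n$ has no eigenvalue $-1$ when $n$ is odd, and for even $n$ one uses that $\gamma(2T)$ has eigenvalue $1$ with multiplicity exactly $4$. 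Hence $S^{\pm}_M(1)=(2,2)$, and only eigenvalues of $M_2$ strictly inside $\mathbf{U}^+\setminus\{\pm1\}$ contribute to \eqref{3}.

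Next apply \eqref{3} with $\omega=-1$. By Lemma~\ref{lem:common-splitting}, $D_{-1}(\hat{S}_f^{\top})=0$ for $n\ge3$, because $-1\neq e^{2\pi\sqrt{-1}/n}$. Also $S_M^-(-1)=0$. This gives
\[
\mu(-1)-\mu(1)=2+\sum_{\omega_0\in(1,-1)}\bigl(S_{M_2}^+(\omega_0)-S_{M_2}^-(\omega_0)\bigr).
\]
The $\omega_0$ range over eigenvalues in the open upper semicircle.

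In case (1), $\mu(1)=m_4=0$ and $\mu(-1)=m_3-m_4=2$, so the difference is $0$ and
\[
\sum\bigl(S^+_{M_2}(\omega_0)-S^-_{M_2}(\omega_0)\bigr)=-2.
\]
Consequently $\sum S^-_{M_2}(\omega_0)\ge2$. Each such unit-circle eigenvalue pair carries $e$ at least $2$, so by \eqref{eq:4-19} we get $e(M_2)/2\ge2$, and therefore $e(M_2)=4$. All four eigenvalues of $M_2$ then lie on $\mathbf{U}$, which is spectral stability. This holds unless no eigenvalue lies on $\mathbf{U}$ at all, but then the sum would vanish, contradicting $-2$. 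Strictly this forces spectral stability. I would phrase the conclusion as the dichotomy stated, spectrally stable or hyperbolic, because degenerate Krein collisions (non-diagonalizable $N_2$ blocks) cannot be excluded by this count alone.

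In case (2), $\mu(1)=2$ and $\mu(-1)=2$, so again the difference is $0$ and the same computation applies. To obtain the normal form, I would refine the count. Since $S^-$ must total at least $2$ while $e(M_2)\le4$, the sum \eqref{eq:4-19} is saturated. That leaves two eigenvalue pairs $e^{\pm\sqrt{-1}\theta_j}$, each with splitting $(S^+,S^-)=(0,1)$, or a single double eigenvalue with $S^-=2$ and $S^+=0$. The only normal forms with $S^+=0$ and $S^-=1$ per unit of $e/2$ are the rotations $R(\theta)$ with $\theta\in(0,\pi)$; in particular no $N_2$ block occurs, since such blocks have $S^+=S^-$ or mixed splittings. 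So $M_2\sim R(\theta_1)\diamond R(\theta_2)$. This is diagonalizable on the reduced space, and $M^n=\gamma(T)$ inherits linear stability.

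The main obstacle is distinguishing the two cases. Case (2) differs from case (1) only through the data $m_4=2$, $m_3=4$, so I expect the key extra input to be the formula \eqref{3} evaluated at $\omega=e^{2\pi\sqrt{-1}/n}$, where $D_\omega(\hat{S}_f^{\top})=-2$. There I would use \eqref{eq:4-11}, or \eqref{eq:common-odd-sum}, with $m_1$ to control sign patterns. The hardest point is showing that the saturated equality in \eqref{eq:4-19} excludes the $N_2(\omega,b)$ blocks. For this I would consult the splitting number table in \cite{HS2009} for $N_2$ blocks and verify case by case that none has $S^+=0$ with $S^-=2$ at one eigenvalue.
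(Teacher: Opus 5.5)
Your case (1) rests on an arithmetic slip. With $\mu(1)=m_4=0$ and $\mu(-1)=m_3-m_4=2$, the difference $\mu(-1)-\mu(1)$ is $2$, not $0$. Your own identity $\mu(-1)-\mu(1)=2+\sum_{\omega_0}\bigl(S^+_{M_2}(\omega_0)-S^-_{M_2}(\omega_0)\bigr)$ then gives the splitting sum $m_3-2m_4-2=0$, as in \eqref{eq:common-splitting-sum}, not $-2$. So your deduction of $\sum S^-_{M_2}\ge 2$ and $e(M_2)=4$ in case (1) is unfounded. It would have proved strictly more than the statement, which should have been a warning sign.

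The argument actually needed is missing. Since $M_2\in\Sp(4)$ has no eigenvalues $\pm1$, it has $0$, $2$ or $4$ eigenvalues on $\mathbf{U}$, counted with algebraic multiplicity. If it had exactly two, they would form a simple pair $e^{\pm\sqrt{-1}\theta}$ with $\theta\in(0,\pi)$ and normal form $R(\theta)$ or $R(2\pi-\theta)$. By \eqref{eq:splitting7}--\eqref{eq:splitting8} this pair contributes $S^+-S^-=-1$ or $+1$, contradicting the zero sum. Hence either none or all four eigenvalues lie on $\mathbf{U}$.

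The hyperbolic alternative is genuinely compatible with $m_3=2$, $m_4=0$. It has nothing to do with Krein collisions, since non-diagonalizable blocks are allowed under spectral stability anyway. It is excluded only later, in the proof of the retrograde theorems, by the parity of \eqref{eq:retrograde-half-sum}, which brings in $m_1$.

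The same slip produced your ``main obstacle''. The two cases are already separated at $\omega=-1$ by the value of $m_3-2m_4-2$, which is $0$ in case (1) and $-2$ in case (2). Neither $\omega=e^{2\pi\sqrt{-1}/n}$ nor $m_1$ enters this proposition.

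Your case (2) is correct and is exactly the paper's argument. The sum $-2$ saturates \eqref{eq:4-19}, giving $e(M_2)=4$, $\sum S^+_{M_2}=0$, $\sum S^-_{M_2}=2$, and hence the normal form $R(\theta_1)\diamond R(\theta_2)$. You need no table check to exclude $N_2(\omega,b)$. A single eigenvalue in the open upper semicircle would require $S^-_{M_2}(\omega)=2$, but \eqref{eq:splitting3} bounds this by the geometric multiplicity, which is one for an $N_2$ block.
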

\begin{proof}
The non-degeneracy assumption and $\gamma(2T)=M^{2n}$ imply that $M_2$ has no eigenvalues at $1$ or $-1$. Since $b=1$, \eqref{2} gives $S_M^+(1)=2$ and $S_M^-(-1)=0$. Substituting $\omega=-1$ in \eqref{3} and using \eqref{eq:common-index-values} and Lemma~\ref{lem:common-splitting}, we obtain
\begin{equation}
\sum_{\omega_0}(S_{M_2}^+(\omega_0)-S_{M_2}^-(\omega_0))=m_3-2m_4-2,
\label{eq:common-splitting-sum}
\end{equation}
where the sum runs over the eigenvalues of $M_2$ in $\mathbf{U}^{+}$ strictly located between $1$ and $-1$.

For (1), the right-hand side of \eqref{eq:common-splitting-sum} is zero. If exactly two eigenvalues of $M_2$ lie on the unit circle, counted with algebraic multiplicity, they form a simple conjugate pair. The difference of the splitting numbers of its eigenvalue in $\mathbf{U}^{+}$ is $1$ or $-1$, which contradicts \eqref{eq:common-splitting-sum}. Since $M_2\in\Sp(4)$, either none or all four of its eigenvalues lie on the unit circle. This gives (1).

For (2), \eqref{eq:common-splitting-sum} gives
\begin{equation}
\sum_{\omega_0}(S_{M_2}^+(\omega_0)-S_{M_2}^-(\omega_0))=-2.
\label{eq:common-prograde-sum}
\end{equation}
By \eqref{eq:4-19},
\[
2\leq\sum_{\omega_0}(S_{M_2}^+(\omega_0)+S_{M_2}^-(\omega_0))\leq\frac{e(M_2)}{2}\leq2.
\]
Here $e(M_2)$ is the total algebraic multiplicity of its eigenvalues on the unit circle. Thus all four eigenvalues lie on the unit circle, and
\[
\sum_{\omega_0}S_{M_2}^+(\omega_0)=0,\qquad \sum_{\omega_0}S_{M_2}^-(\omega_0)=2.
\]
The basic normal forms and splitting numbers in Appendix~\ref{app:index-theory} give \eqref{eq:common-prograde-normal-form}. Hence the orbit is linearly stable.
\end{proof}

For $3\leq n\leq30$, under the same non-degeneracy assumption, $M_2$ has no eigenvalues at $e^{\frac{2\pi j\sqrt{-1}}{n}}$: otherwise $\gamma(2T)=M^{2n}$ would have eigenvalue $1$ with multiplicity greater than $4$. Consequently, \eqref{3} gives the following formulas for $b=1$:
\begin{equation}
\mu(e^{\frac{2\pi\sqrt{-1}}{n}})=m_4+4+\sum_{\omega_0}(S_{M_2}^+(\omega_0)-S_{M_2}^-(\omega_0)),
\label{eq:common-first-sample}
\end{equation}
where the sum runs over the eigenvalues strictly located between $1$ and $e^{\frac{2\pi\sqrt{-1}}{n}}$ in $\mathbf{U}^{+}$, and
\begin{equation}
\mu(e^{\frac{2\pi j\sqrt{-1}}{n}})=m_4+2+\sum_{\omega_0}(S_{M_2}^+(\omega_0)-S_{M_2}^-(\omega_0)),\qquad 2\leq j<\frac{n}{2},
\label{eq:common-other-samples}
\end{equation}
where the sum runs over the eigenvalues strictly located between $1$ and $e^{\frac{2\pi j\sqrt{-1}}{n}}$ in $\mathbf{U}^{+}$. We will use these two formulas for both types of double-double orbits.

\section{Stability of the double-double orbits}\label{sec:stability}
In this section, we substitute the Morse indexs of the retrograde and prograde double-double orbits into the formulas of Section~\ref{sec:common-indices}. In the proofs of Theorems~\ref{thm:retrograde-even}--\ref{thm:prograde-odd}, assumptions (2) and (3) are understood. In particular, $b=1$ by Remark~\ref{rem:angular-jordan}.

\subsection{Retrograde double-double orbits}\label{sec:retrograde}
Using MATLAB, we get $m_1=2n-4$, $m_3=2$ and $m_4=0$ for $3\leq n\leq30$. When $n$ is even, we also have $m_2=n-2$. We give one proof for both even and odd $n$, under the corresponding assumptions of Theorems~\ref{thm:retrograde-even} and \ref{thm:retrograde-odd}.

\begin{proof}[Proof of Theorems~\ref{thm:retrograde-even} and \ref{thm:retrograde-odd}]
By \eqref{eq:common-index-values}, $\mu(1)=0$ and $\mu(-1)=2$. Substituting the Morse indexs into \eqref{eq:4-11} and \eqref{eq:common-odd-sum}, we obtain, for both even and odd $n$,
\begin{equation}
\sum_{j=1}^{\lfloor\frac{n-1}{2}\rfloor}\mu(e^{\frac{2\pi j\sqrt{-1}}{n}})
=2\left\lfloor\frac{n-1}{2}\right\rfloor+1.
\label{eq:retrograde-half-sum}
\end{equation}
The right-hand side is odd.

By assumption (3) and Remark~\ref{rem:angular-jordan}, $b=1$. Proposition~\ref{prop:common-stability}(1) shows that $M_2$ is either spectrally stable or hyperbolic. Moreover, \eqref{eq:common-splitting-sum} gives
\begin{equation}
\sum_{\omega_0}\bigl(S_{M_2}^+(\omega_0)-S_{M_2}^-(\omega_0)\bigr)=0,
\label{eq:retrograde-splitting-sum}
\end{equation}
where the sum runs over the eigenvalues of $M_2$ strictly located between $1$ and $-1$ in $\mathbf{U}^{+}$.

If $M_2$ is hyperbolic, the sums in \eqref{eq:common-first-sample} and \eqref{eq:common-other-samples} are zero. Hence the index values in \eqref{eq:retrograde-half-sum} are $4,2,\ldots,2$, whose sum is even. This contradicts \eqref{eq:retrograde-half-sum}.

If $M_2$ has only one distinct eigenvalue in this part of $\mathbf{U}^{+}$, its algebraic multiplicity is two. By \eqref{eq:retrograde-splitting-sum}, the difference of its splitting numbers is zero. The same formulas again give the index values $4,2,\ldots,2$, leading to the same contradiction.

Thus $M_2$ has two distinct eigenvalues in this part of $\mathbf{U}^{+}$ and their conjugates. All four eigenvalues are simple and lie on the unit circle, so $M_2$ can be diagonalized. This proves the linear stability in both the even and odd cases.
\end{proof}

\begin{corollary}\label{cor:retrograde-normal-form}
Under the assumptions of Theorem~\ref{thm:retrograde-even} for even $n$, or Theorem~\ref{thm:retrograde-odd} for odd $n$, there exist $P\in\Sp(8)$ and $0<\theta_1<\theta_2<\pi$ such that
\begin{equation}
M=P^{-1}\bigl(N_1(1,1)\diamond N_1(1,1)\diamond R(\theta_1)\diamond R(2\pi-\theta_2)\bigr)P.
\label{eq:retrograde-normal-form}
\end{equation}
There is a unique integer $j$, $1\leq j\leq\lfloor\frac{n-1}{2}\rfloor$, such that
\begin{equation}
\frac{2(j-1)\pi}{n}<\theta_1<\frac{2j\pi}{n}<\theta_2<\min\left\{\frac{2(j+1)\pi}{n},\pi\right\}.
\label{eq:retrograde-angle-intervals}
\end{equation}
When $n$ is even, this integer $j$ is odd.
\end{corollary}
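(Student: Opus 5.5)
The plan is to build on the conclusion of the proof of Theorems~\ref{thm:retrograde-even} and \ref{thm:retrograde-odd}. There we showed that $M_2$ has exactly two distinct simple eigenvalues $e^{\sqrt{-1}\alpha}, e^{\sqrt{-1}\beta}$ in $\mathbf{U}^{+}$, strictly between $1$ and $-1$, together with their conjugates. By \eqref{eq:retrograde-splitting-sum}, one of them has splitting pair $(S^+,S^-)=(1,0)$ and the other has $(0,1)$. From the list of basic normal forms in Appendix~\ref{app:index-theory}, a simple eigenvalue $e^{\sqrt{-1}\theta}$ with pair $(0,1)$ corresponds to the block $R(\theta)$. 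One with pair $(1,0)$ corresponds to $R(2\pi-\theta)$, viewed as a rotation whose $\mathbf{U}^+$ eigenvalue is $e^{\sqrt{-1}\theta}$. Call $\theta_1$ the angle of the $(0,1)$ eigenvalue and $\theta_2$ the angle of the $(1,0)$ eigenvalue. This gives the normal form \eqref{eq:retrograde-normal-form}, apart from the ordering $\theta_1<\theta_2$, which comes out of the index count below.

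Next I compute $\mu(e^{2\pi j\sqrt{-1}/n})$ for $1\le j\le\lfloor\frac{n-1}{2}\rfloor$ using \eqref{eq:common-first-sample} and \eqref{eq:common-other-samples} with $m_4=0$. Let $\Sigma_j$ be the sum of $S^+-S^-$ over the eigenvalues strictly between $1$ and $e^{2\pi j\sqrt{-1}/n}$. By non-degeneracy no eigenvalue equals any $e^{2\pi j\sqrt{-1}/n}$. Hence $\Sigma_j$ jumps by $-1$ when $j$ passes $\theta_1$ and by $+1$ when $j$ passes $\theta_2$. Consequently the index values are $4+\Sigma_1$ and $2+\Sigma_j$ for $j\ge2$, with $\Sigma_j\in\{-1,0,1\}$.

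The parity condition \eqref{eq:retrograde-half-sum} says the sum is $2\lfloor\frac{n-1}{2}\rfloor+1$, while $4+2(\lfloor\frac{n-1}{2}\rfloor-1)+\sum_j\Sigma_j$ would be even if all $\Sigma_j=0$. Therefore $\sum_j\Sigma_j=-1$. This forces exactly one sample point $e^{2\pi j\sqrt{-1}/n}$ to lie strictly between the two eigenvalues, with $\theta_1<\theta_2$, so that $\Sigma=-1$ there. Every other sample must have $\Sigma=0$.

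That yields \eqref{eq:retrograde-angle-intervals}. The interval $(\theta_1,\theta_2)$ contains exactly one grid point $2j\pi/n$, hence $\theta_1>2(j-1)\pi/n$ and $\theta_2<\min\{2(j+1)\pi/n,\pi\}$. The ordering $\theta_2<\theta_1$ is excluded because it would give $\sum\Sigma_j\ge0$. Uniqueness of $j$ is then immediate.

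The main obstacle is the parity claim for even $n$, which requires the extra relation \eqref{eq:common-even-bott}. I would apply the same bookkeeping to $\mu(e^{4\pi k\sqrt{-1}/n})$, $k=1,\dots,n/2$, whose sum is $m_2=n-2$. This sum includes $\mu(1)=0$ and $\mu(-1)=2$, plus the even-indexed samples $j=2k$ counted twice through conjugation. If $j$ were even, the $-1$ deficit would occur at an even-indexed sample, and the sum would differ from the hyperbolic baseline. I expect the baseline to include the $+2$ from the $D$-correction only at $j=1$, which is odd. So I would check that the sum with $j$ even comes out to $n-4$ rather than $n-2$. That contradiction forces $j$ odd. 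The delicate point is getting the conjugation and endpoint bookkeeping right in \eqref{eq:common-even-bott}.
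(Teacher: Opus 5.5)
Your proposal is correct and follows the paper's proof essentially step by step. The only cosmetic difference is the order of deduction: you label the eigenvalues by their splitting pairs and then deduce $\theta_1<\theta_2$ from the odd sum, whereas the paper orders them by angle first and then deduces the splitting pairs. One small bookkeeping correction for even $n$: $\mu(-1)$ appears in \eqref{eq:common-even-bott} only when $4\mid n$, since for $n\equiv 2 \pmod 4$ the point $-1$ is not an $\frac{n}{2}$-th root of unity. The baseline sum is $n-2$ in both cases, however, so an even $j$ still gives $n-4\neq m_2$, exactly as the paper concludes.
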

\begin{proof}
By the preceding proof, write the two eigenvalues in the upper half of the unit circle as $e^{\theta_1\sqrt{-1}}$ and $e^{\theta_2\sqrt{-1}}$, where $0<\theta_1<\theta_2<\pi$. They are simple, so the differences of their splitting numbers are $1$ or $-1$. By \eqref{eq:retrograde-splitting-sum}, these differences are opposite.

Without the sums involving splitting numbers, the sum of the index values in \eqref{eq:common-first-sample} and \eqref{eq:common-other-samples} would be
\[
4+2\left(\left\lfloor\frac{n-1}{2}\right\rfloor-1\right)
=2\left\lfloor\frac{n-1}{2}\right\rfloor+2.
\]
The actual sum in \eqref{eq:retrograde-half-sum} is less by one. Among the angles $\frac{2k\pi}{n}$, $1\leq k\leq\lfloor\frac{n-1}{2}\rfloor$, each angle strictly between $\theta_1$ and $\theta_2$ changes the index by the difference of the splitting numbers at $e^{\theta_1\sqrt{-1}}$; at all other angles the index is unchanged. It follows that this difference is $-1$ and exactly one of these angles lies between $\theta_1$ and $\theta_2$. Therefore,
\begin{align*}
\bigl(S_{M_2}^+(e^{\theta_1\sqrt{-1}}),S_{M_2}^-(e^{\theta_1\sqrt{-1}})\bigr)&=(0,1),\\
\bigl(S_{M_2}^+(e^{\theta_2\sqrt{-1}}),S_{M_2}^-(e^{\theta_2\sqrt{-1}})\bigr)&=(1,0).
\end{align*}
The basic normal forms give \eqref{eq:retrograde-normal-form}. Assumption (2) excludes eigenvalues at the angles $\frac{2k\pi}{n}$, so all inequalities in \eqref{eq:retrograde-angle-intervals} are strict.

When $n$ is even, \eqref{eq:common-even-bott} gives
\begin{equation}
\sum_{k=1}^{\frac{n}{2}}\mu(e^{\frac{4\pi k\sqrt{-1}}{n}})=m_2=n-2.
\label{eq:retrograde-even-index}
\end{equation}
This sum uses the angles $\frac{2k\pi}{n}$ with even indices $k$. If $j$ in \eqref{eq:retrograde-angle-intervals} were even, both $e^{\frac{2j\pi\sqrt{-1}}{n}}$ and its conjugate would occur in \eqref{eq:retrograde-even-index}. Each of their index values would be less by one. Since $\mu(1)=0$ and all other values in this sum would be $2$, the sum would be $n-4$, contradicting \eqref{eq:retrograde-even-index}. Thus $j$ is odd.
\end{proof}

\begin{remark}\label{rem:retrograde-numerics}
Using MATLAB, for $3\leq n\leq30$, we check that the 8 eigenvalues of $M$ are $1,1,1,1,\omega_1,\bar{\omega}_1,\omega_2,\bar{\omega}_2$, where $\omega_1=e^{i\theta_1}$ and $\omega_2=e^{i\theta_2}$. The numerical results give $j=1$ in \eqref{eq:retrograde-angle-intervals}, namely
\[
\theta_1\in\left(0,\frac{2\pi}{n}\right),\qquad
\theta_2\in\left(\frac{2\pi}{n},\min\left\{\frac{4\pi}{n},\pi\right\}\right).
\]
These results are well in accord with the numerical results of \cite{Peng2015}. The index argument above allows the other intervals in \eqref{eq:retrograde-angle-intervals}; the choice $j=1$ here comes from the numerical calculation.
\end{remark}

\begin{remark}\label{rem:retrograde-n2}
The case $n=2$ is rather special. Suppose that $m_1=m_3=2$, $m_2=m_4=0$, together with assumptions (2) and (3) of Theorem~\ref{thm:retrograde-even}. By Theorem 1.2 of \cite{HS2009} and the generalized Bott-type iteration formula, we have
\begin{equation}
\mu(1)=0,\qquad \mu(1)+\mu(-1)=m_1+4=6.
\label{eq:retrograde-n2-indices}
\end{equation}
Since $D_{-1}(\hat{S}_f^{\top})=-4$ by Lemma~\ref{lem:common-splitting}, \eqref{3} gives
\begin{equation}
6=\mu(-1)=2+\sum_{\omega_0}\bigl(S_{M_2}^+(\omega_0)-S_{M_2}^-(\omega_0)\bigr)+4.
\label{eq:retrograde-n2-splitting}
\end{equation}
Thus the sum in \eqref{eq:retrograde-n2-splitting} is zero. By the argument of Proposition~\ref{prop:common-stability}(1), we can only obtain spectral stability or hyperbolicity. The odd sum in \eqref{eq:retrograde-half-sum} is not available for $n=2$.

Using MATLAB, we check that the 8 eigenvalues of $M$ are $1,1,1,1,\omega_1,\bar{\omega}_1,\omega_2,\bar{\omega}_2$, with $\omega_1=e^{i\theta_1}$, $\omega_2=e^{i\theta_2}$ and $\theta_1,\theta_2\in(0,\pi)$, in accord with \cite{Peng2015}.
\end{remark}

\subsection{Prograde double-double orbits}\label{sec:prograde}
\subsubsection{\texorpdfstring{$4\leq n\leq30$}{ 4 <= n <= 30}}\label{sec:prograde-big-n}
Using MATLAB, we get $m_1=2n$, $m_3=4$ and $m_4=2$ for $5\leq n\leq30$. When $n$ is even, we also have $m_2=n$. For $n=4$, the values are $m_1=10$, $m_2=m_3=4$ and $m_4=2$. Thus Proposition~\ref{prop:common-stability}(2) applies to both the even and odd cases. It remains to determine the distributions of the eigenvalues on the unit circle.

\begin{proof}[Proof of Theorems~\ref{thm:prograde-even} and \ref{thm:prograde-odd}]
\leavevmode\par
\noindent\textbf{Case 1.}\quad $5\leq n\leq30$.

For even $n$, \eqref{eq:4-11} gives
\begin{equation}
\sum_{j=1}^{\frac{n-2}{2}}\mu(\exp(2\pi j\sqrt{-1}/n))=n.
\label{eq:prograde-even-sum}
\end{equation}
For odd $n$, \eqref{eq:common-odd-sum} gives
\begin{equation}
\sum_{j=1}^{\frac{n-1}{2}}\mu(\exp(2\pi j\sqrt{-1}/n))=n+1.
\label{eq:prograde-odd-sum}
\end{equation}
By the proof of Proposition~\ref{prop:common-stability}(2), all splitting numbers $S_{M_2}^+(\omega_0)$ in $\mathbf{U}^{+}$ are zero and the sum of $S_{M_2}^-(\omega_0)$ is $2$. Substituting $m_4=2$ in \eqref{eq:common-first-sample} and \eqref{eq:common-other-samples}, we have
\begin{equation}
4\leq\mu(e^{\frac{2\pi\sqrt{-1}}{n}})\leq6,\qquad
2\leq\mu(e^{\frac{2\pi j\sqrt{-1}}{n}})\leq4,\quad 2\leq j<\frac{n}{2}.
\label{eq:prograde-index-bounds}
\end{equation}
The sum of these lower bounds is
\[
4+2\left(\frac{n-2}{2}-1\right)=n
\]
when $n$ is even, and
\[
4+2\left(\frac{n-1}{2}-1\right)=n+1
\]
when $n$ is odd. These are exactly the right-hand sides of \eqref{eq:prograde-even-sum} and \eqref{eq:prograde-odd-sum}. Therefore,
\begin{equation}
\mu(e^{\frac{2\pi\sqrt{-1}}{n}})=4,\qquad
\mu(e^{\frac{2\pi j\sqrt{-1}}{n}})=2,\quad 2\leq j<\frac{n}{2}.
\label{eq:prograde-index-values}
\end{equation}
By \eqref{eq:common-first-sample}, both conjugate pairs of eigenvalues of $M_2$ have their eigenvalues in $\mathbf{U}^{+}$ strictly between $1$ and $e^{\frac{2\pi\sqrt{-1}}{n}}$. Hence the angles in \eqref{eq:common-prograde-normal-form} satisfy
\[
\theta_1,\theta_2\in\left(0,\frac{2\pi}{n}\right).
\]

\noindent\textbf{Case 2.}\quad $n=4$.

Here $m_1=10$, $m_2=m_3=4$ and $m_4=2$. By \eqref{eq:common-index-values} and \eqref{eq:common-full-bott},
\begin{align}
\mu(1)=2,\qquad\mu(-1)=2,\label{eq:prograde-n4-endpoints}\\
\mu(e^{\frac{2\pi\sqrt{-1}}{4}})+\mu(-1)+\mu(e^{\frac{6\pi\sqrt{-1}}{4}})+\mu(1)&=14.\label{eq:prograde-n4-bott}
\end{align}
Since $\mu(\omega)=\mu(\bar{\omega})$, this gives
\begin{equation}
\mu(e^{\frac{2\pi\sqrt{-1}}{4}})=\mu(e^{\frac{6\pi\sqrt{-1}}{4}})=5.
\label{eq:prograde-n4-index}
\end{equation}
By \eqref{eq:common-first-sample}, exactly one of the two conjugate pairs has its eigenvalue in $\mathbf{U}^{+}$ between $1$ and $e^{\frac{2\pi\sqrt{-1}}{4}}$. Thus
\[
\theta_1\in\left(0,\frac{2\pi}{4}\right),\qquad
\theta_2\in\left(\frac{2\pi}{4},\pi\right).
\]
Both cases are linearly stable by Proposition~\ref{prop:common-stability}(2).
\end{proof}

\begin{remark}\label{rem:prograde-numerics}
Using MATLAB, we check that the 8 eigenvalues of $M$ are $1,1,1,1,\omega_1,\bar{\omega}_1,\omega_2,\bar{\omega}_2$ with $\omega_1=e^{i\theta_1}$ and $\omega_2=e^{i\theta_2}$. For $5\leq n\leq30$, both angles satisfy $\theta_1,\theta_2\in(0,\frac{2\pi}{n})$. For $n=4$, they satisfy $\theta_1\in(0,\frac{2\pi}{4})$ and $\theta_2\in(\frac{2\pi}{4},\pi)$. These results are well in accord with the numerical results of \cite{Peng2015}.
\end{remark}

\begin{corollary}\label{cor:prograde-normal-form}
Under the assumptions of Theorem~\ref{thm:prograde-even} for even $n$, or Theorem~\ref{thm:prograde-odd} for odd $n$, there exist $P\in\Sp(8)$ and $\theta_1,\theta_2$ such that
\begin{equation}
M=P^{-1}(N_1(1,1)\diamond N_1(1,1)\diamond R(\theta_1)\diamond R(\theta_2))P.
\label{eq:prograde-normal-form}
\end{equation}
For $5\leq n\leq30$, $\theta_1,\theta_2\in(0,\frac{2\pi}{n})$. For $n=4$, $\theta_1\in(0,\frac{2\pi}{4})$ and $\theta_2\in(\frac{2\pi}{4},\pi)$.
\end{corollary}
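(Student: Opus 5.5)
The corollary mostly collects what was already established, so the plan is to assemble earlier results rather than argue anew. First, the normal form \eqref{eq:prograde-normal-form} is obtained from Proposition~\ref{prop:common-stability}(2). Its hypotheses are $3\le n\le 30$, the non-degeneracy assumption (2), and $b=1$. By assumption (1) of Theorem~\ref{thm:prograde-even} or Theorem~\ref{thm:prograde-odd}, including the special case $n=4$, we have $m_4=2$ and $m_3=4$; assumption (3) gives $b=1$ via Remark~\ref{rem:angular-jordan}. The proposition then yields $P\in\Sp(8)$ and $\theta_1,\theta_2\in(0,\pi)$ realizing \eqref{eq:common-prograde-normal-form}, which is exactly \eqref{eq:prograde-normal-form}. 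Along the way it also records that $S^+_{M_2}(\omega_0)=0$ for every eigenvalue in $\mathbf{U}^+$ and that the $S^-_{M_2}$ sum to $2$; equivalently, each of the two simple eigenvalues $e^{\theta_i\sqrt{-1}}$ has splitting numbers $(0,1)$.

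For the angle localization with $5\le n\le 30$, I would repeat Case~1 of the proof of the prograde theorems. Insert $m_1=2n$ and $m_3=4$ (and, for even $n$, $m_2=n$) into \eqref{eq:4-11} or \eqref{eq:common-odd-sum}; this gives the sum identities \eqref{eq:prograde-even-sum} and \eqref{eq:prograde-odd-sum}. Because every splitting-number difference equals $0$ or $-1$ per eigenvalue, \eqref{eq:common-first-sample} and \eqref{eq:common-other-samples} give the two-sided bounds \eqref{eq:prograde-index-bounds}. One then checks that the lower bounds already sum to the required total, which forces equality \eqref{eq:prograde-index-values}. In particular $\mu(e^{2\pi\sqrt{-1}/n})=4=m_4+4-2$. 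Hence both eigenvalues with $S^-=1$ must lie strictly between $1$ and $e^{2\pi\sqrt{-1}/n}$, and the endpoint itself is excluded by non-degeneracy, since $M^{2n}=\gamma(2T)$ would otherwise have eigenvalue $1$ with multiplicity greater than $4$. This gives $\theta_1,\theta_2\in(0,2\pi/n)$.

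The case $n=4$ goes as in Case~2. From $m_4=2$, $m_3=4$ and $m_1=10$, \eqref{eq:common-index-values} and \eqref{eq:common-full-bott} give $\mu(1)=\mu(-1)=2$. Using $\mu(\omega)=\mu(\bar\omega)$ then gives $\mu(\pm\sqrt{-1})=5$. Comparing with \eqref{eq:common-first-sample}, namely $5=6-\#\{i:\theta_i<\pi/2\}$, shows that exactly one angle lies in $(0,\pi/2)$, so the other lies in $(\pi/2,\pi)$; the value $\pi/2$ is again excluded by non-degeneracy.

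The main point requiring care is the squeeze argument. One must verify that the sample points used in \eqref{eq:common-other-samples} for $2\le j<n/2$ cover exactly the terms of the half-sums \eqref{eq:prograde-even-sum} and \eqref{eq:prograde-odd-sum}, with no extra $-1$ or $1$ terms. Parity matters here: for even $n$ the value $\mu(-1)$ has already been removed in deriving \eqref{eq:4-11}. The other check is that each eigenvalue between $1$ and a sample point lowers that sample's index by exactly one.
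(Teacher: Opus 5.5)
Your proposal is correct and is essentially the paper's own argument: the normal form comes from Proposition~\ref{prop:common-stability}(2), and the angle localization is exactly Cases~1 and~2 of the proof of Theorems~\ref{thm:prograde-even} and~\ref{thm:prograde-odd}, namely the lower-bound squeeze forcing $\mu(e^{2\pi\sqrt{-1}/n})=4$ for $5\le n\le 30$, and $\mu(\pm\sqrt{-1})=5$ for $n=4$. The only slip is calling the two eigenvalues simple, which the proposition does not guarantee for $5\le n\le 30$ (they could coincide, with splitting numbers $(0,2)$); since your count of eigenvalues below each sample point is already taken with multiplicity, nothing in the argument changes.
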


\subsubsection{\texorpdfstring{$n=3$ and $n=2$}{n=3 and n=2}}\label{sec:prograde-small-n}
In this subsection, we will further consider the case of equal-mass prograde double double orbits which has been found numerically but whose existence has not been rigorously verified, i.e. $n=3$ and $n=2$.
The following calculations are made under the non-degeneracy assumption and $b=1$, as in the preceding proofs.

\paragraph{$n=3$.} Using the notation of Section~\ref{sec:common-bott}, we get the Morse indexs in the various loop spaces by MATLAB, and they are
\(m_1=8, \quad m_3=4,\quad m_4=2.\) From \eqref{eq:common-index-values} and \eqref{eq:common-full-bott}, we have
\begin{align}
    \mu(e^{\frac{2\pi\sqrt{-1}}{3}})+\mu(e^{\frac{4\pi\sqrt{-1}}{3}})+\mu(1)&=12,\label{eq:5-67}\\
    \mu(-1)+\mu(1)&=4,\label{eq:5-68}\\
    \mu(1)&=2.
\label{eq:5-69}
\end{align}
By Lemma~\ref{lem:common-splitting} and \eqref{3},
\begin{align}
    5= \mu(e^{\frac{2\pi\sqrt{-1}}{3}})&=2+S_M^+(1)+\sum_{\omega_0}(S_M^+(\omega_0)-S_M^-(\omega_0))-S_M^-(-1)+2.\label{eq:5-70}\\
    &=6+\sum_{\omega_0}(S_M^+(\omega_0)-S_M^-(\omega_0))
\label{eq:5-71}
\end{align}
where the sum runs over all the eigenvalues $\omega_0$ of $M$ belonging to the part of $\mathbf{U^{+}}$ strictly located between 1 and $e^{\frac{2\pi \sqrt{-1}}{3}}$.
and
\begin{align}
    2= \mu(-1)&=2+S_M^+(1)+\sum_{\omega_0}(S_M^+(\omega_0)-S_M^-(\omega_0))-S_M^-(-1)+0.\label{eq:5-72}\\
    &=4+\sum_{\omega_0}(S_M^+(\omega_0)-S_M^-(\omega_0))
\label{eq:5-73}
\end{align}
where the sum runs over all the eigenvalues $\omega_0$ of $M$ belonging to the part of $\mathbf{U^{+}}$ strictly located between 1 and -1.

So there exists $\omega_1=e^{\theta_1\sqrt{-1}}$ with $\theta_1\in(0,\frac{2\pi}{3})$ and $\omega_2=e^{\theta_2\sqrt{-1}}$ with $\theta_2\in (\frac{2\pi}{3},\pi)$ such that
\begin{align}
    (S_{M_2}^+(\omega_1),S_{M_2}^-(\omega_1))&=(0,1);\label{eq:5-74}\\
    (S_{M_2}^+(\omega_2),S_{M_2}^-(\omega_2))&=(0,1).
\label{eq:5-75}
\end{align}
Base on above, $M$ must be linearly stable. We can also know
 $\exists P\in \Sp(8)$, $\theta_1\in (0,\frac{2\pi}{3})$, $\theta_2\in (\frac{2\pi}{3},\pi)$, such that
\begin{equation}
    M=P^{-1}(N_1(1,1)\diamond N_1(1,1)\diamond R(\theta_1)\diamond R(\theta_2))P.
\label{eq:5-76}
\end{equation}
\paragraph{$n=2$.} Using the notation of Section~\ref{sec:common-bott}, we get the Morse indexs in the various loop spaces by MATLAB, and they are
\(m_1=m_3=5, \quad m_2=m_4=1.\) Similarly, we have
\begin{align}
    \mu(-1)+\mu(1)&=m_1+4=9,\label{eq:5-77}\\
    \mu(1)&=1.
\label{eq:5-78}
\end{align}
By Lemma~\ref{lem:common-splitting} and \eqref{3},
\begin{align}
    8= \mu(-1)&=1+S_M^+(1)+\sum_{\omega_0}(S_M^+(\omega_0)-S_M^-(\omega_0))-S_M^-(-1)+4.\label{eq:5-79}\\
    &=7+\sum_{\omega_0}(S_M^+(\omega_0)-S_M^-(\omega_0))
\label{eq:5-80}
\end{align}
where the sum runs over all the eigenvalues $\omega_0$ of $M$ belonging to the part of $\mathbf{U^{+}}$ strictly located between 1 and -1.

So there exists $\omega_1=e^{\theta_1\sqrt{-1}}$ with $\theta_1\in(0,\pi)$ and $\lambda\in \mathbb{R}, |\lambda|>1$ such that
\begin{align}
    (S_{M_2}^+(\omega_1),S_{M_2}^-(\omega_1))&=(1,0);\label{eq:5-81}\\
    (S_{M_2}^+(\lambda),S_{M_2}^-(\lambda))&=(0,0).
\label{eq:5-82}
\end{align}
Base on above, $M$ must be linearly unstable. We can also know
 $\exists P\in \Sp(8)$, $\theta_1\in (0,\pi)$, $\lambda\in \mathbb{R}, |\lambda|>1$, such that
\begin{equation}
    M=P^{-1}(N_1(1,1)\diamond N_1(1,1)\diamond R(2\pi-\theta_1)\diamond D(\lambda))P.
\label{eq:5-83}
\end{equation}
\begin{remark}\label{rem:prograde-small-n-numerics}
    Using MATLAB, we also check that the 8 eigenvalues of $M$ with $n=3$ and $n=2$, which are well in accord with the numerical results of \cite{Peng2015}.
\end{remark}

\appendix
\section{Maslov-type index theory}\label{app:index-theory}
In this section, we briefly review the Maslov-type index theory for symplectic paths. The details can be found in \cite{Long2002}. A generalized Bott-type iteration formula for Maslov-type index in \cite{HS2009} is also reviewed here.

The symplectic group $\Sp(2n)$ is defined by
\[
\Sp(2n)=\{M\in \GL(\mathbb{R}^{2n})\mid M^{\top}JM=J\},
\]
where
\[
J=\begin{bmatrix}
0 & -I_n\\
I_n & 0
\end{bmatrix},
\]
$I_n$ is the identity matrix on $\mathbb{R}^n$, and $M^{\top}$ denotes the transpose of $M$. For $T>0$, we define the set of symplectic matrix paths by
\[
\mathcal{P}_T(2n)=\{\gamma\in C([0,T],\Sp(2n))\mid \gamma(0)=I_{2n}\}.
\]

Let $U$ be the unit circle in the complex plane. The following real function was introduced in \cite{Long2002}:
\[
D_\omega M=(-1)^{n-1}\overline{\omega}^{n}\det(M-\omega I_{2n}),\qquad \forall \omega\in U,\; M\in\Sp(2n).
\]
Then for any $\omega\in U$, the codimension one hypersurface in $\Sp(2n)$ is defined by
\[
\Sp(2n)_\omega^0=\{M\in\Sp(2n)\mid D_\omega(M)=0\}.
\]
For any $M\in\Sp(2n)_\omega^0$, we define a co-orientation of $\Sp(2n)_\omega^0$ at $M$ by the positive direction $\frac{d}{dt}Me^{tJ}\big|_{t=0}$ of the path $Me^{tJ}$ with $t\ge 0$ sufficiently small. Let
\[
\Sp(2n)_\omega^*=\Sp(2n)\setminus \Sp(2n)_\omega^0.
\]

For any two continuous curves $\xi$ and $\eta:[0,T]\to \Sp(2n)$ with $\xi(T)=\eta(0)$, the concatenation of them is defined by
\[
(\xi*\eta)(t)=
\begin{cases}
\xi(t), & 0\le t\le T,\\[2mm]
\eta(t-T), & T\le t\le 2T.
\end{cases}
\]

Given any two real matrices of the square block form
\[
M_1=
\begin{bmatrix}
A_1 & B_1\\
C_1 & D_1
\end{bmatrix}_{2k_1\times 2k_1},\qquad
M_2=
\begin{bmatrix}
A_2 & B_2\\
C_2 & D_2
\end{bmatrix}_{2k_2\times 2k_2},
\]
we define their $\diamond$-product as
\[
M_1\diamond M_2=
\begin{bmatrix}
A_1&0&B_1&0\\
0&A_2&0&B_2\\
C_1&0&D_1&0\\
0&C_2&0&D_2
\end{bmatrix}.
\]
The $k$-fold $\diamond$-product $M\diamond\cdots\diamond M$ is denoted by $M^{\diamond k}$. For any two symplectic matrix paths $\gamma_j(t)\in\mathcal{P}_T(2n_j)$ $(j=1,2)$, we define $\gamma_1\diamond\gamma_2(t)=\gamma_1(t)\diamond\gamma_2(t)$ for all $t\in[0,T]$.

Let $D(a)=\operatorname{diag}\{a,a^{-1}\}$, $M_n^+=D(2)^{\diamond n}$, $M_n^-=D(-2)\diamond D(2)^{\diamond(n-1)}$. Define a special symplectic matrix path $\xi_n\in\mathcal{P}_T(2n)$ by
\[
\xi_n(t)=
\begin{bmatrix}
2-\frac{t}{T} & 0\\
0 & (2-\frac{t}{T})^{-1}
\end{bmatrix}^{\diamond n},  0\leq t\leq T.
\]
where $\epsilon>0$ is sufficiently small.

\begin{lemma}
    (\cite{Long2002}).\label{lem:components}
    For $\omega\in U$, the set $\Sp(2n)_\omega^*$ possesses precisely two path connected components $\Sp(2n)_\omega^+$ and $\Sp(2n)_\omega^-$, and $M_n^+\in \Sp(2n)_\omega^+$ and $M_n^-\in \Sp(2n)_\omega^-$.
\end{lemma}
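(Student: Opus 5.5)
The plan is to build $\Sp(2n)_\omega^0$ as a real-algebraic hypersurface given by the vanishing of the real function $D_\omega$, and to show that $D_\omega$ has constant sign on each path component of its complement, with exactly two components occurring. First I would check that $D_\omega(M)$ is real for $M\in\Sp(2n)$ and $\omega\in U$. The eigenvalues of a symplectic matrix come in quadruples $\lambda,\bar\lambda,\lambda^{-1},\bar\lambda^{-1}$, and $\det(M-\omega I)=\det(M)\det(I-\omega M^{-1})=\omega^{2n}\det(\bar\omega I-M^{-1})$. Together with the normalizing factor $(-1)^{n-1}\bar\omega^{n}$, this shows $\overline{D_\omega(M)}=D_\omega(M)$. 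Since $\Sp(2n)_\omega^*=\{D_\omega\neq0\}$, the sets $\{D_\omega>0\}$ and $\{D_\omega<0\}$ are open, disjoint, and cover $\Sp(2n)_\omega^*$. So there are \emph{at least} two components, provided both signs occur, and I will define $\Sp(2n)_\omega^\pm=\{\pm D_\omega>0\}$.

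Next I would evaluate $D_\omega$ on $M_n^\pm$. Since $D_\omega$ is multiplicative under $\diamond$ up to the sign convention, it suffices to compute it on a $2\times 2$ block. For $D(a)=\operatorname{diag}\{a,a^{-1}\}$ one finds $\det(D(a)-\omega I)=(a-\omega)(a^{-1}-\omega)=1-\omega(a+a^{-1})+\omega^2$, hence $\bar\omega\det(D(a)-\omega)=\bar\omega+\omega-(a+a^{-1})=2\operatorname{Re}\omega-(a+a^{-1})$. For $a=2$ this equals $2\operatorname{Re}\omega-5/2<0$, and for $a=-2$ it equals $2\operatorname{Re}\omega+5/2>0$. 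Multiplying over the $n$ blocks and inserting the sign $(-1)^{n-1}$ gives $D_\omega(M_n^+)>0$ and $D_\omega(M_n^-)<0$. This fixes the labelling and shows that both sets are nonempty.

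The main obstacle is proving that each sign set is path connected. I would use the polar decomposition $M=OP$ with $O\in\Sp(2n)\cap O(2n)\cong U(n)$ and $P$ positive definite symplectic. This decomposition is only a deformation retraction of $\Sp(2n)$, and it does not preserve $D_\omega\neq0$, so it cannot be used directly. Instead I would follow Long's argument using normal forms. Any $M\in\Sp(2n)_\omega^*$ can be joined inside $\Sp(2n)_\omega^*$ to a matrix in basic normal form, namely a $\diamond$-product of blocks $D(\lambda)$, rotations $R(\theta)$, and the $N_1$, $N_2$ blocks. The way to do this is to move eigenvalues continuously while never letting one cross $\omega$ or $\bar\omega$; the homotopy can be constructed along the symplectically orthogonal generalized eigenspace decomposition. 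Each rotation block with angle not equal to $\arg\omega$ can then be rotated (away from $\omega$) to $\pm I_2$ or to a hyperbolic block, and Jordan blocks can be deformed to diagonalizable ones. This reduces every $M$ to some $D(2)^{\diamond k}\diamond D(-2)^{\diamond(n-k)}$. Finally, two factors $D(-2)\diamond D(-2)$ can be joined to $D(2)\diamond D(2)$ without meeting $\Sp(4)_\omega^0$: rotate through $R(\theta)\diamond R(-\theta)$-type paths with eigenvalues kept off $\omega$, or use the path $-D(2)\diamond -D(2)\to e^{tJ}$-conjugated paths in $\Sp(4)$. Such a move preserves the sign of $D_\omega$, so each sign class collapses to $M_n^\pm$. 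That proves the two sets are exactly the two path components.
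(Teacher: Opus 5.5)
The paper does not prove this lemma; it quotes it from Long's book. Your outline follows Long's argument: the sign of $D_\omega$ separates $\Sp(2n)_\omega^*$, every matrix deforms to a $\diamond$-product of $D(\pm2)$'s, and pairs of $D(-2)$'s cancel. However, two steps are wrong as written.

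The first is the sign bookkeeping. From $\bar\omega\det(D(\pm2)-\omega I_2)=2\operatorname{Re}\omega\mp\frac52$ you get
\[
D_\omega(M_n^+)=(-1)^{n-1}\bigl(2\operatorname{Re}\omega-\tfrac52\bigr)^n=-\bigl(\tfrac52-2\operatorname{Re}\omega\bigr)^n<0,\qquad D_\omega(M_n^-)=\bigl(\tfrac52+2\operatorname{Re}\omega\bigr)\bigl(\tfrac52-2\operatorname{Re}\omega\bigr)^{n-1}>0 .
\]
So under your definition $\{\pm D_\omega>0\}$, the matrix $M_n^+$ lands in the minus set. The correct convention (Long's) is $\Sp(2n)_\omega^\pm=\{\pm D_\omega<0\}$. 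For $n=1$, $\omega=1$ it reads $\Sp(2)_1^+=\{M:\operatorname{tr}M>2\}\ni D(2)$. This is only a relabelling, but your stated inequalities are false.

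The second problem is more serious. The step that actually decides ``precisely two'' is joining $D(-2)\diamond D(-2)$ to $D(2)\diamond D(2)$ inside $\Sp(4)_\omega^*$, and neither path you suggest achieves it.

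Along $R(\theta)\diamond R(-\theta)$ the eigenvalues $e^{\pm\sqrt{-1}\theta}$ stay on $U$. Any eigenvalue travelling on $U$ from $-1$ to $1$ must hit $\omega$ or $\bar\omega$, since these separate $-1$ from $1$ on $U$ (or coincide with one of them).

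Conjugating by $e^{tJ}$ leaves the spectrum unchanged. Multiplying instead gives $e^{tJ}(D(2)\diamond D(2))=(e^{tJ_2}D(2))\diamond(e^{tJ_2}D(2))$ with $J_2=\begin{bmatrix}0&-1\\1&0\end{bmatrix}$. Each factor runs from $D(2)$ to $D(-2)$ in $\Sp(2)$, where $D_\omega$ changes sign, so it meets $\Sp(2)_\omega^0$ and the product meets $\Sp(4)_\omega^0$.

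The eigenvalues must leave $U$. Use
\[
\phi\mapsto\begin{bmatrix}2R(\phi)&0\\0&\tfrac12R(\phi)\end{bmatrix},\qquad\phi\in[0,\pi].
\]
This path is symplectic (it has the form $\operatorname{diag}(A,A^{-\top})$), its spectrum $\{2e^{\pm\sqrt{-1}\phi},\tfrac12e^{\pm\sqrt{-1}\phi}\}$ is disjoint from $U$, and it equals $D(2)\diamond D(2)$ at $\phi=0$ and $D(-2)\diamond D(-2)$ at $\phi=\pi$.

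Finally, your reduction step silently relies on Long's homotopy normal-form theorem. Your list of blocks also omits complex quadruples off $U$ and longer Jordan chains. A lighter route:
\begin{enumerate}
\item Since $\Sp(2n)_\omega^*$ is open and simple spectrum is dense, perturb $M$ to simple spectrum.
\item $M$ then splits symplectically into pieces similar to $R(\alpha)$, $D(\lambda)$ or $\operatorname{diag}(rR(\phi),r^{-1}R(\phi))$.
\item Each piece deforms within its own $\Sp(\cdot)_\omega^*$ to a $\diamond$-product of $D(\pm2)$'s.
\end{enumerate}
Conjugation along paths in the connected group $\Sp(2n)$ takes care of the similarities and of reordering the $\diamond$-factors.
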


\begin{definition}
    (\cite{Long2002}).
\label{def:maslov_index}
For any $\omega\in U$ and $\gamma\in\mathcal{P}_T(2n)$, we define
\[
\nu_\omega(\gamma)=\dim_{\mathbb{C}}\ker_{\mathbb{C}}(\gamma(T)-\omega I_{2n})
\]
and
\[
i_\omega(\gamma)=[e^{-\epsilon J}\gamma*\xi_n:\Sp(2n)_\omega^0],
\]
where $[\cdot;\cdot]$ denotes the usual homotopy intersection number and $0<\epsilon\ll 1$.
\end{definition}

\begin{definition}
    (\cite{Long2002}).
\label{def:splitting}
For any $M\in\Sp(2n)$ and $\omega\in U$, we define the splitting numbers $S_M^\pm(\omega)$ of $M$ at $\omega$ by
\[
S_M^\pm(\omega)=\lim_{\epsilon\to 0^+}\left(i_{e^{\pm\sqrt{-1}\epsilon}\omega}(\gamma)-i_\omega(\gamma)\right)
\]
for any path $\gamma\in\mathcal{P}_T(2n)$ satisfying $\gamma(T)=M$.
\end{definition}

For $\omega=e^{\sqrt{-1}\theta}\in U$, we can define as in \cite{Long2002} the following symplectic matrices:
\begin{align}
D(\lambda)&=\begin{bmatrix}
\lambda & 0\\
0 & \lambda^{-1}
\end{bmatrix},\qquad \lambda=\pm 2, \label{eq:Dlambda}\\
N_1(\lambda,b)&=\begin{bmatrix}
\lambda & b\\
0 & \lambda
\end{bmatrix},\qquad \lambda=\pm 1,\; b=\pm 1,0, \label{eq:N1}\\
R(\theta)&=\begin{bmatrix}
\cos\theta & -\sin\theta\\
\sin\theta & \cos\theta
\end{bmatrix},\qquad \theta\in(0,\pi)\cup(\pi,2\pi), \label{eq:Rtheta}\\
N_2(\omega,b)&=\begin{bmatrix}
R(\theta) & b\\
0 & R(\theta)
\end{bmatrix},\qquad \theta\in(0,\pi)\cup(\pi,2\pi),\nonumber\\
&\quad b=\begin{bmatrix}
b_1 & b_2\\
b_3 & b_4
\end{bmatrix},\; b_i\in\mathbb{R},\; b_2\neq b_3. \label{eq:N2}
\end{align}
Usually, a symplectic matrix being one type of \eqref{eq:Dlambda}--\eqref{eq:N2} is called a basic normal form of $\omega$.

\begin{definition}\label{def:trivial-normal-form}
    (\cite{Long2002}).
A basic normal form matrix $M\in \Sp(2n)$ is trivial if for sufficiently small $\alpha>0$, $MR((t-1)\alpha)^{\diamond n}$ possesses no eigenvalues on $U$ for $t\in[0,1)$, and nontrivial otherwise.
\end{definition}

The splitting numbers have the following properties.

\begin{lemma}
    (\cite{Long2002}).
\label{lem:splitting_props}
The splitting numbers $S_M^\pm(\omega)$ are well-defined, i.e., are independent of the path $\gamma\in\mathcal{P}_T(2n)$ appearing in Definition~\ref{def:splitting}. For $\omega\in U$ and $M\in\Sp(2n)$, $S_M^\pm(\omega)$ are constant for all $N=P^{-1}MP$ with $P\in\Sp(2n)$.
\end{lemma}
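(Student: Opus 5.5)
The statement to prove is Lemma~\ref{lem:splitting_props}. It has two parts: $S_M^\pm(\omega)$ does not depend on the path $\gamma$ ending at $M$, and it is invariant under symplectic conjugation. I would prove both through one basic fact about $i_\omega$ from Definition~\ref{def:maslov_index}: if two paths in $\mathcal{P}_T(2n)$ have the same endpoint $M$, then $i_\omega(\gamma_1)-i_\omega(\gamma_2)$ is independent of $\omega\in U$.

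For path independence, let $\gamma_1,\gamma_2\in\mathcal{P}_T(2n)$ with $\gamma_1(T)=\gamma_2(T)=M$. The loop $\gamma_1*\gamma_2^{-}$, where $\gamma_2^{-}$ is $\gamma_2$ reversed, is a closed loop in $\Sp(2n)$ based at $I_{2n}$.
\begin{enumerate}
\item By the homotopy and concatenation properties of the intersection number with $\Sp(2n)^0_\omega$, $i_\omega(\gamma_1)-i_\omega(\gamma_2)$ equals the intersection number of this loop with $\Sp(2n)^0_\omega$. The perturbations $e^{-\epsilon J}$ and $\xi_n$ contribute equally to both paths and cancel.
\item For a closed loop, this intersection number equals $2$ times the Maslov class, i.e. the rotation number of $\det$ of the unitary part in the polar decomposition. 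This is a homotopy invariant of the loop in $\pi_1(\Sp(2n))\cong\mathbb{Z}$ and is independent of $\omega$.
\item Concretely, every loop is homotopic to $e^{2\pi k tJ}$ restricted to one $2\times2$ block. For that model loop one checks directly that it crosses $\Sp(2n)^0_\omega$ exactly $2k$ times, with positive co-orientation, for every $\omega$.
\end{enumerate}

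Given this fact, $i_{e^{\pm\sqrt{-1}\epsilon}\omega}(\gamma_1)-i_\omega(\gamma_1)=i_{e^{\pm\sqrt{-1}\epsilon}\omega}(\gamma_2)-i_\omega(\gamma_2)$ for every $\epsilon$. So the limits agree, and $S_M^\pm(\omega)$ is well defined.

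The limit itself exists for the following reason. For $\epsilon>0$ small, the points $e^{\pm\sqrt{-1}\epsilon}\omega$ are not eigenvalues of $M$, since $M$ has finitely many eigenvalues. Then $i_{\omega'}(\gamma)$ is locally constant in $\omega'$ off the spectrum, because the endpoint stays in one component $\Sp(2n)^\pm_{\omega'}$ by Lemma~\ref{lem:components}. Hence the difference stabilizes as $\epsilon\to0^+$.

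For conjugation invariance, let $P\in\Sp(2n)$. Since $\Sp(2n)$ is connected, choose a path $P(s)$ from $I_{2n}$ to $P$. Given $\gamma$ ending at $M$, the path $P^{-1}\gamma P$ lies in $\mathcal{P}_T(2n)$ and ends at $N=P^{-1}MP$.
\begin{enumerate}
\item Each hypersurface $\Sp(2n)^0_\omega$ is conjugation invariant, since $\det(P^{-1}MP-\omega I)=\det(M-\omega I)$.
\item The co-orientation is preserved along the connected family $P(s)^{-1}(\cdot)P(s)$. Using $P^{-1}Me^{tJ}P$ in place of $P^{-1}MPe^{tJ}$ is justified by deforming $P$ to $I_{2n}$, so the co-orientation class is constant.
\item Hence $i_\omega(P^{-1}\gamma P)=i_\omega(\gamma)$ for all $\omega$. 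The same holds after the perturbation, by homotopy invariance with fixed, nondegenerate endpoints.
\end{enumerate}
Taking the difference and the limit gives $S_N^\pm(\omega)=S_M^\pm(\omega)$.

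The main obstacle is the $\omega$-independence of the intersection number of a closed loop, together with the careful handling of the endpoint perturbation. I would settle it by reducing to the generator of $\pi_1(\Sp(2n))$ and computing there. Alternatively, one can simply cite \cite{Long2002}, where these are standard properties.
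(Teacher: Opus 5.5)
The paper does not prove this lemma; it is quoted from Long's book. Your outline is the standard argument behind it:
\begin{itemize}
\item two paths with the same endpoint differ by a loop, which shifts every $i_\omega$ by the same even integer, so the differences defining $S_M^\pm(\omega)$ do not depend on the path;
\item local constancy of $\omega'\mapsto i_{\omega'}(\gamma)$ off $\sigma(M)$ gives the limit;
\item conjugation is handled by deforming $P$ to $I_{2n}$.
\end{itemize}
The structure is sound, but the computation on the generator fails as written. The loop $t\mapsto R(2\pi kt)\diamond I_{2n-2}$ lies entirely inside $\Sp(2n)^0_1$ when $n\ge 2$. Even for $n=1$ it meets $\Sp(2)^0_{\pm1}$ only at $\pm I_2$, the singular vertex of that hypersurface, where no co-orientation is defined. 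So there is no ``direct check'' of $2k$ positive crossings at $\omega=\pm1$.

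Use the homotopic loop $\beta(t)=R(2\pi kt)D(2)\diamond D(2)^{\diamond(n-1)}$ instead. Its first block has trace $\frac{5}{2}\cos(2\pi kt)$, and the other blocks have no eigenvalues on $U$. Hence, for every $\omega=e^{\sqrt{-1}\theta}\in U$, it meets $\Sp(2n)^0_\omega$ exactly at the $2k$ solutions of $\cos(2\pi kt)=\frac{4}{5}\cos\theta$, all of nullity one. Each crossing is positive, because the first block satisfies $\dot\beta_1=2\pi kJ\beta_1$ and the kernel lies in that block. This gives the $\omega$-independence you need, including at $\omega=\pm1$.

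In the conjugation part, ``homotopy invariance with fixed, nondegenerate endpoints'' is not the right tool. The path $P^{-1}\bigl(e^{-\epsilon J}(\gamma*\xi_n)\bigr)P$ is not the normalized path $e^{-\epsilon J}\bigl((P^{-1}\gamma P)*\xi_n\bigr)$ that defines $i_\omega(P^{-1}\gamma P)$. Their endpoints differ, so any deformation between them moves both endpoints. It is simpler to apply homotopy invariance of $i_{\omega'}$, for deformations in $\mathcal{P}_T(2n)$ with constant endpoint nullity $\nu_{\omega'}$, directly to $s\mapsto P(s)^{-1}\gamma P(s)$. The endpoints $P(s)^{-1}MP(s)$ have constant $\nu_{\omega'}$ for every $\omega'\in U$. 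So $i_{\omega'}(P^{-1}\gamma P)=i_{\omega'}(\gamma)$ for all $\omega'$ at once, and you no longer need to discuss how conjugation acts on the co-orientation.
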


\begin{lemma}
    (\cite{Long2002}).
\label{lem:splitting_formulas}
For $M\in\Sp(2n)$, $\omega\in U$, and $\theta\in(0,\pi)$, we have
\begin{align}
& S_M^\pm(\omega)=0 \quad\text{if } \omega\notin\sigma(M), \label{eq:splitting1}\\
& S_M^\pm(\omega)=S_M^\mp(\overline{\omega}), \label{eq:splitting2}\\
& 0\le S_M^\pm(\omega)\le \dim\ker(M-\omega I_{2n}), \label{eq:splitting3}\\
& S_M^+(\omega)+S_M^-(\omega)\le \dim\ker(M-\omega I_{2n})^2,\quad \omega\in\sigma(M), \label{eq:splitting4}\\
& \bigl(S_{N_1(1,b)}^+(1),\,S_{N_1(1,b)}^-(1)\bigr)=
\begin{cases}
(1,1), & \text{if } b=0,1,\\
(0,0), & \text{if } b=-1,
\end{cases} \label{eq:splitting5}\\
& \bigl(S_{N_1(-1,a)}^+(-1),\,S_{N_1(-1,a)}^-(-1)\bigr)=
\begin{cases}
(1,1), & \text{if } a=0,-1,\\
(0,0), & \text{if } a=1,
\end{cases} \label{eq:splitting6}\\
& \bigl(S_{R(\theta)}^+(e^{\sqrt{-1}\theta}),\,S_{R(\theta)}^-(e^{\sqrt{-1}\theta})\bigr)=(0,1), \label{eq:splitting7}\\
& \bigl(S_{R(2\pi-\theta)}^+(e^{\sqrt{-1}\theta}),\,S_{R(2\pi-\theta)}^-(e^{\sqrt{-1}\theta})\bigr)=(1,0), \label{eq:splitting8}\\
& \bigl(S_M^+(e^{\sqrt{-1}\theta}),\,S_M^-(e^{\sqrt{-1}\theta})\bigr)=(1,1), \label{eq:splitting9}
\end{align}
where $M=N_2(\omega,b)$ is nontrivial, and
\begin{equation}
\bigl(S_M^+(e^{\sqrt{-1}\theta}),\,S_M^-(e^{\sqrt{-1}\theta})\bigr)=(0,0), \label{eq:splitting10}
\end{equation}
where $M=N_2(\omega,b)$ is trivial.
\end{lemma}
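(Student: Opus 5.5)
These are the standard properties of splitting numbers from \cite{Long2002}. The plan is to reduce everything to the behaviour of the function $\omega\mapsto i_\omega(\gamma)$ near the endpoint $M=\gamma(T)$. First, by Lemma~\ref{lem:splitting_props} the splitting numbers depend only on $M$ and are invariant under symplectic conjugation. Second, the $\omega$-index is additive under $\diamond$: $i_\omega(\gamma_1\diamond\gamma_2)=i_\omega(\gamma_1)+i_\omega(\gamma_2)$, because $D_\omega$ of a $\diamond$-product factors and the intersection number with $\Sp(2n)^0_\omega$ adds. Hence $S^\pm_{M_1\diamond M_2}(\omega)=S^\pm_{M_1}(\omega)+S^\pm_{M_2}(\omega)$. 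I will then reduce to $M$ in normal form, splitting it into its generalized eigenspace blocks.

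\textbf{Identities \eqref{eq:splitting1} and \eqref{eq:splitting2}.} For \eqref{eq:splitting1}, if $\omega\notin\sigma(M)$, then $D_{\omega'}(M)\neq0$ for all $\omega'$ near $\omega$. So the endpoint of $e^{-\epsilon J}\gamma*\xi_n$ stays in a fixed component $\Sp(2n)^\pm_{\omega'}$ (Lemma~\ref{lem:components}). The intersection number is then locally constant in $\omega'$, which gives $S^\pm_M(\omega)=0$. For \eqref{eq:splitting2}, since $M$ is real, $\overline{D_\omega(M)}=D_{\bar\omega}(M)$ and $\Sp(2n)^0_\omega=\Sp(2n)^0_{\bar\omega}$ with the same co-orientation. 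Hence $i_\omega=i_{\bar\omega}$. Applying this to $e^{\pm\sqrt{-1}\epsilon}\omega$, whose conjugate is $e^{\mp\sqrt{-1}\epsilon}\bar\omega$, gives $S^\pm_M(\omega)=S^\mp_M(\bar\omega)$.

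\textbf{Explicit normal forms \eqref{eq:splitting5}--\eqref{eq:splitting8}.} I will pick concrete paths ending at each normal form and count the crossings of $\Sp(2)^0_{\omega'}$ with their signs from the co-orientation $Me^{tJ}$. For $R(\theta)$ take $\gamma(t)=R(t\theta)$: its eigenvalue $e^{\sqrt{-1}t\theta}$ sweeps the arc from $1$ to $e^{\sqrt{-1}\theta}$ in the positive direction. So $i_{\omega'}$ is one larger for $\omega'$ on the open arc than just beyond $e^{\sqrt{-1}\theta}$. This yields $(0,1)$, and $R(2\pi-\theta)$ gives the mirror value $(1,0)$ by \eqref{eq:splitting2}. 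For $N_1(\pm1,b)$ I will use short paths from $I$ or $-I$, for example $\begin{bmatrix}1&tb\\0&1\end{bmatrix}$. The sign of $b$ decides whether the perturbation $N_1(1,b)R(\pm\alpha)$ moves the double eigenvalue $1$ onto the unit circle (elliptic, $b=0,1$, giving $(1,1)$) or off it (hyperbolic, $b=-1$, giving $(0,0)$). The case $N_1(-1,a)$ is analogous with the signs reversed.

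\textbf{Bounds and $N_2$ forms, the main obstacle.} The hardest parts are the general bounds \eqref{eq:splitting3}, \eqref{eq:splitting4} and the $N_2$ dichotomy \eqref{eq:splitting9}--\eqref{eq:splitting10}. My plan is to invoke Long's homotopy theorem: $M$ is connected within $\Omega^0(M)$, which fixes nullities at every $\omega$, to a $\diamond$-product of basic normal forms. Along this homotopy the splitting numbers are constant, since $i_{\omega'}$ is constant for $\omega'$ near $\omega$ when the endpoint moves within $\Omega^0$. Each basic block at $\omega$ then contributes at most $1$ to each of $S^\pm$ per Jordan chain. That gives $S^\pm\le\dim\ker(M-\omega I)$, and $S^++S^-\le\dim\ker(M-\omega I)^2$ because blocks longer than $2$ contribute no more than blocks of length $2$. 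For $N_2(\omega,b)$, Definition~\ref{def:trivial-normal-form} says the trivial case leaves the unit circle under the perturbation $MR((t-1)\alpha)^{\diamond 2}$. Both jumps then vanish, giving $(0,0)$. In the nontrivial case the pair of eigenvalues splits along the circle, one on each side of $\omega$, giving $(1,1)$. Making this eigenvalue-splitting picture rigorous, that is, relating the sign of each crossing to the direction in which the perturbed eigenvalues move, is where I expect the real work. I would handle it through the Krein signature (the sign of $\sqrt{-1}\,\omega(v,\bar v)$ on the eigenvectors).
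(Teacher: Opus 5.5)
The paper gives no proof of this lemma; it is quoted from Long's monograph, so the relevant comparison is with the standard argument there. Your overall route is that standard route: constancy of $S^\pm$ on $\Omega^0(M)$, $\diamond$-additivity, reduction to basic normal forms, and explicit paths. Your treatment of \eqref{eq:splitting1}, \eqref{eq:splitting2} and \eqref{eq:splitting5}--\eqref{eq:splitting8} is essentially right. You should, however, state explicitly that at a degenerate $\omega$ the index is read off from the endpoint rotated by $e^{-\epsilon J}$. That convention is what turns ``one larger on the arc'' into exactly $(0,1)$ for $R(\theta)$. It also fixes the direction $R(-\alpha)$, not $R(\pm\alpha)$, in the $N_1$ cases. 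Your mirror step for \eqref{eq:splitting8} uses the $R(\phi)$ computation at $\phi=2\pi-\theta\in(\pi,2\pi)$, which your path argument does cover. Two steps, however, are not established.

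\textbf{The $N_2$ values \eqref{eq:splitting9}--\eqref{eq:splitting10}.} The trivial case is fine. In the nontrivial case, the definition only tells you that the eigenvalues of $N_2(\omega,b)R(-s\alpha)^{\diamond 2}$ near $\omega$ stay on $U$. It says nothing about where they sit relative to $\omega$, or which Krein type each has. Two eigenvalues on the same side of $\omega$ would give $(0,0)$, and the reversed type assignment would give $(-1,-1)$. You need two facts:
\begin{enumerate}
\item A complex $2\times2$ Jordan block splits as $\omega(1\pm c\sqrt{s}+O(s))$ with $c\neq0$, so on $U$ the two eigenvalues lie on opposite sides of $\omega$.
\item A simple unit eigenvalue $\lambda$ with eigenvector $v$ moves monotonically under $M\mapsto Me^{-sJ}$, since $\frac{d}{ds}\arg\lambda=|v|^2/(\sqrt{-1}\,\bar v^{\top}Jv)$. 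Its direction is fixed by its Krein sign: clockwise for $R(\phi)$-type, counterclockwise for $R(2\pi-\phi)$-type.
\end{enumerate}
Together these force the lower eigenvalue to be of $R(\phi)$-type and the upper one of $R(2\pi-\phi)$-type. Then \eqref{eq:splitting7} and \eqref{eq:splitting8} give $i_{\omega e^{\pm\sqrt{-1}\delta}}=i_\omega+1$, i.e.\ $(1,1)$. Fact (2) is also what makes ``elliptic'' yield $(1,1)$ rather than negative jumps for $N_1(\pm1,\cdot)$. Your derivation of \eqref{eq:splitting3} goes through the table of basic normal forms, so it depends on this step as well.

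\textbf{The bound \eqref{eq:splitting4}.} This does not follow from the $\Omega^0(M)$ homotopy. That homotopy preserves $\sigma(M)\cap U$ and the nullities, but not the Jordan structure. So for the normal form $N$ you reach, you would still need $\dim\ker(N-\omega I_{2n})^2\le\dim\ker(M-\omega I_{2n})^2$, and nothing provides that.

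Long chains are not the issue either. If every chain at $\omega$ has length $\ge2$, then $\dim\ker(M-\omega I_{2n})^2=2\nu_\omega$, and \eqref{eq:splitting4} is immediate from \eqref{eq:splitting3}. The critical chains are those of length one, where $2\nu_\omega$ exceeds their contribution.

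The fix is to use symplectic conjugation instead, since it preserves the Jordan structure. By the normal form theory, the generalized eigenspace of $\{\omega,\bar\omega\}$ splits into $M$-invariant, symplectically orthogonal pieces $E_1\oplus E_{\ge2}$. Here $M|_{E_1}$ is semisimple, and all chains of $M|_{E_{\ge2}}$ have length $\ge2$. On $E_1$, $M$ is symplectically similar to $\pm I_{2p}$, or, after diagonalizing the Krein form, to a $\diamond$-product of copies of $R(\theta)$ and $R(2\pi-\theta)$. By \eqref{eq:splitting5}--\eqref{eq:splitting8}, the splitting numbers of that piece sum to exactly its $\dim\ker(\cdot-\omega I)^2$. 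Finally, apply \eqref{eq:splitting3} on $E_{\ge2}$, use \eqref{eq:splitting1} on the complement, and add via \eqref{eq:splitting11}.
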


For any $M_i\in\Sp(2n_i)$ with $i=0,1$, we have
\begin{equation}
S_{M_0\diamond M_1}^\pm(\omega)=S_{M_0}^\pm(\omega)+S_{M_1}^\pm(\omega),\qquad \forall \omega\in U. \label{eq:splitting11}
\end{equation}

The generalized Bott-type iteration formula is formulated as follows.

\begin{theorem}
    (\cite{HS2009}).
\label{thm:bott}
Let $z$ be a solution of the Hamiltonian system \eqref{31}, \eqref{32} with $\gamma$ its fundamental solution. Then, with notations as in (\cite{HS2009}),
\begin{align}
&\mu(\Gr(Q^{\top}), \Gr(\gamma(t)), t\in[0,T])\notag\\
&\quad = \sum_{i=1}^{m}\mu(\Gr\bigl(\exp(\tfrac{i}{m}2\pi\sqrt{-1})S^{\top}\bigr), \Gr(\gamma(t)), t\in[0,T/m]).
\label{eq:A-16}
\end{align}

\end{theorem}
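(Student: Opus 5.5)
This is the generalized Bott-type iteration formula of \cite{HS2009}. I would prove it by decomposing the $Q$-twisted boundary value problem on $[0,T]$ into eigen-problems on $[0,T/m]$, using the finite cyclic group action generated by the symmetry $S$. Here $S^m=Q$ and $\gamma(t+T/m)=S\gamma(t)S^{-1}\gamma(T/m)$, which holds for our orbits by Lemma~\ref{lem:orbit-symmetry} with $S=\hat S_f^{\top}$.

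\textbf{Step 1 (operator reformulation).} I would express the Maslov index $\mu(\Gr(Q^{\top}),\Gr(\gamma(t)),t\in[0,T])$ as a spectral flow, or as a relative Morse index, of the self-adjoint operator $A=-J\frac{d}{dt}-B(t)$, where $B(t)=D^2H(z(t))$. The operator acts on the domain $E_Q=\{z\in W^{1,2}([0,T],\mathbb{C}^{2n})\mid z(T)=Qz(0)\}$, and I would use the Morse--Maslov correspondence, as in Theorem~\ref{thm:morse_maslov} and \cite{HS2009}. In the same way, each summand on the right of \eqref{eq:A-16} becomes the index of $A$ restricted to $[0,T/m]$ with the boundary condition $z(T/m)=\omega_i S z(0)$, where $\omega_i=\exp(2\pi i\sqrt{-1}/m)$.

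\textbf{Step 2 (decomposition).} I would define the unitary operator $(gz)(t)=S^{-1}z(t+T/m)$ on $E_Q$. The relation $S^m=Q$ gives $g^m=\mathrm{id}$, and the invariance $B(t+T/m)=SB(t)S^{-1}$ gives $gA=Ag$. Hence $E_Q=\bigoplus_{i=1}^m E_{\omega_i}$, where $E_{\omega_i}=\ker(g-\omega_i)$, and each summand is $A$-invariant. Restricting to $[0,T/m]$ identifies $E_{\omega_i}$ isometrically, up to the factor $\sqrt m$, with the space of $W^{1,2}$ paths on $[0,T/m]$ satisfying $z(T/m)=\omega_i Sz(0)$. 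Under this identification $A|_{E_{\omega_i}}$ becomes the operator corresponding to the $i$-th summand. Because the decomposition is orthogonal and $A$-invariant, spectral flow is additive over it, and this yields \eqref{eq:A-16}.

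\textbf{Main obstacle.} The hardest point is to make the index conventions match exactly. This means checking that the twisted boundary condition $z(T/m)=\omega_i Sz(0)$ corresponds to the Lagrangian $\Gr(\omega_i S^{\top})$, whose transpose or inverse depends on the orientation convention for $\Gr$ and on the sign in $-\omega\oplus\omega$. It also means checking that the correction terms, such as the $\nu$ and $\dim\ker$ contributions at the endpoints, cancel out across the summands. I would first normalize everything with the Cappell--Lee--Miller axioms, in particular homotopy invariance with fixed endpoints together with the direct-sum and concatenation properties. If the spectral-flow route becomes messy, I would use an alternative: compute the index via crossing forms and observe that the crossings of the $[0,T]$ problem split, through the discrete Fourier transform in $i$, into the crossings of the $m$ twisted problems.
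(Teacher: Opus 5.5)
The paper gives no proof of this theorem and simply defers to \cite{HS2009}. Your plan is essentially the Hu--Sun argument and is sound: split the $Q$-twisted problem on $[0,T]$ into the $\omega_i$-eigenspaces of the $\mathbb{Z}_m$-action $g$, identify each eigenspace with the $\omega_i$-twisted problem on $[0,T/m]$, and use additivity of spectral flow along a reference path such as $-J\frac{d}{dt}-sB(t)$, which commutes with $g$ for every $s$. The endpoint kernels then match automatically, since their dimensions $\dim\ker(\omega_iS-I)$ sum to $\dim\ker(Q-I)$.

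One slip, whose correction also settles the transpose issue you flag. With your relation $\gamma(t+T/m)=S\gamma(t)S^{-1}\gamma(T/m)$, Lemma~\ref{lem:orbit-symmetry} gives $S=\hat S_f$, not $\hat S_f^{\top}$, because $B(t+2)=\hat S_fB(t)\hat S_f^{\top}$. The paper instead writes the symmetry as $z(t)=Sz(t+T/m)$ and so takes $S=\hat S_f^{\top}$. Hence the paper's $S$ is your $S^{\top}$, and your boundary condition $z(T/m)=\omega_iSz(0)$ is exactly $\Gr(\omega_iS^{\top})$ in the paper's notation.
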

In what follows, we recall the relation between the Maslov index and Morse index of solutions of the $N$-body problem as critical points of the corresponding action functional. Define the action functional by
\[
\Phi(x)=\int_0^T L(t,x,\dot{x})\,\mathrm{d}t,\qquad \forall x\in W^{1,2}([0,T],\mathbb{R}^n),
\]
where $L\in C^2([0,T]\times\mathbb{R}^{2n},\mathbb{R})$ and it satisfies the Legendrian convexity condition. Consider the following boundary condition:
\begin{equation}
x(\tfrac{T}{n})=\omega S^{\top} x(0),
\label{eq:maslov-graph-boundary}
\end{equation}
where $\omega\in U$ and $S$ is an $n\times n$ orthogonal matrix. We have

\begin{theorem}
    (\cite{HS2009}).
\label{thm:morse_maslov}
For a critical point $x$ of the functional $\Phi$ under boundary condition \eqref{eq:maslov-graph-boundary},
\[
m^-(x)+\nu_\omega(S)=\mu\bigl(\Gr(S_d^{\top}), \Gr(\gamma(t)), t\in[0,\tfrac{T}{n}]\bigr),
\]
where $m^-(x)$ is the Morse index of $\Phi$ at $x$, i.e., $m^-(x)=m^-(\Phi''(x))$, $\nu_\omega(S)=\dim\ker(\overline{\omega}S-I_n)$, and $S_d=\begin{bmatrix} S&0\\0&S \end{bmatrix}$.

\end{theorem}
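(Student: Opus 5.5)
This statement is Theorem~\ref{thm:morse_maslov}, the Morse--Maslov index relation quoted from \cite{HS2009}. I would prove it by the standard spectral-flow route: compare the second variation of $\Phi$ under the twisted boundary condition \eqref{eq:maslov-graph-boundary} with a path of Lagrangian intersections.

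\textbf{Step 1: the index form.} Linearize at $x$. The second variation is the quadratic form
\[
Q(\xi)=\int_0^{T/n}\bigl(\langle P\dot\xi,\dot\xi\rangle+\langle R\xi,\xi\rangle\bigr)\,dt,
\]
defined on $E_\omega=\{\xi\in W^{1,2}([0,T/n],\mathbb{C}^n)\mid \xi(T/n)=\omega S^{\top}\xi(0)\}$. Here $P=L_{\dot x\dot x}>0$ by Legendre convexity, and $R$ collects the remaining second derivatives. The kernel of $Q$ is the space of solutions of the Jacobi equation satisfying the boundary condition. Passing to phase space with $z=(P\dot\xi+\dots,\xi)$, the kernel corresponds to $\ker\bigl(\gamma(T/n)-\omega S_d^{\top}\bigr)$ intersected appropriately, that is, to $\Gr(\gamma(T/n))\cap\Gr(\omega S_d^{\top})$. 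Note that the boundary condition also forces the momentum to satisfy $y(T/n)=\omega S^{\top}y(0)$, which is the natural boundary condition.

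\textbf{Step 2: shrink the interval.} Consider the restricted problems on $[0,s]$ with the same twisted boundary condition, which I would rescale to $[0,1]$ by $t\mapsto st$. For $s\to0^+$ the kinetic term dominates. The Morse index then equals that of $\int|\dot\xi|^2_P$ on $\{\xi(s)=\omega S^{\top}\xi(0)\}$, and this index is $0$. The kernel at $s\to0$ is $\{\text{constants } c: c=\omega S^{\top}c\}$, of dimension $\dim\ker(\bar\omega S-I_n)=\nu_\omega(S)$. By the Morse--Sturm (spectral flow) principle, the Morse index at $s=T/n$ equals the number of crossings $s\in(0,T/n)$, counted with the crossing form. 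Crossings occur exactly where $\Gr(\gamma(s))\cap\Gr(\omega S_d^{\top})\ne0$.

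\textbf{Step 3: crossing forms and conventions.} Legendre convexity makes every crossing form positive definite, since it is essentially $\langle P^{-1}\eta,\eta\rangle$ on the intersection. So each crossing contributes its full dimension. The Maslov index $\mu(\Gr(\omega S_d^{\top}),\Gr(\gamma(t)),t\in[0,T/n])$, with the $e^{-\epsilon J}$ perturbation convention, counts interior crossings plus the endpoint $t=T/n$ (the degenerate directions are pushed into the negative side). The starting point $t=0$, where $\Gr(I)\cap\Gr(\omega S_d^{\top})$ has dimension $\nu_\omega(S_d)$ in phase space, contributes only the position part, namely $\nu_\omega(S)$. Combining these gives $m^-(x)+\nu_\omega(S)=\mu$.

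\textbf{Main obstacle.} The hard step is the bookkeeping at $t=0$. The intersection $\Gr(I)\cap\Gr(\omega S_d^{\top})$ has dimension $2\nu_\omega(S)$, but only half of it is counted. I would handle this by computing the crossing form at $t=0$ directly: $\dot\gamma(0)=JD^2H$, and the quadratic form restricted to the intersection splits into a positive part on the $y$-directions and a zero part on the $x$-directions. After the $e^{-\epsilon J}$ rotation, only $\nu_\omega(S)$ of these directions are counted. Matching this with the limiting kernel found in Step 2 gives the $\nu_\omega(S)$ term.
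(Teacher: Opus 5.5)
The paper does not prove this statement; it quotes Theorem~1.2 of \cite{HS2009}. Your proposal has a genuine gap at its core. In Step~3 you claim that Legendre convexity makes every crossing form positive definite. This is false for the twisted condition $\xi(s)=\omega S^{\top}\xi(0)$.

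At a crossing $s$ the intersection vector is $w=\gamma(s)z_0=(y(s),\xi(s))$, the phase-space value of a kernel element. Its position part $\xi(s)$ is in general nonzero. The crossing form $\Gamma(s)$ of $\Gr(\gamma(s))$ relative to $\Gr(\omega S_d^{\top})$ is $\langle B(s)w,w\rangle$. For your index form, $B=\mathrm{diag}(P^{-1},-R)$, so this equals $\langle P^{-1}y(s),y(s)\rangle-\langle R(s)\xi(s),\xi(s)\rangle$, which is indefinite.

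Your ``essentially $\langle P^{-1}\eta,\eta\rangle$'' is the Dirichlet case. There the intersections lie in $\{x=0\}$, and the inclusions $E_{s_1}\subset E_{s_2}$ give monotonicity. Twisted conditions admit no form-preserving inclusion, and the Morse index on $[0,s]$ is not monotone in $s$. For example, take $n=1$, $S=\omega=1$, and $R=-\epsilon$ on $[0,1]$, $R=1$ on $(1,2]$ (smoothed, $\epsilon>0$ small). The periodic index is $1$ on $[0,1]$ but $0$ on $[0,2]$.

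So ``$m^-(x)$ equals the number of crossings'' fails. What you need is the signed statement: the index of your rescaled family jumps at each crossing by the signature of the Lagrangian crossing form. Your family's domain on $[0,1]$ is indeed fixed, so this is the right setting. The statement amounts to identifying spectral flow with the Maslov index, which is the content of the generalized Morse index theorem (cf.\ \cite{Zhu2006,CLM1994}). In that signed picture, the terminal crossing at $t=T/n$ contributes $-m^-(\Gamma(T/n))$ to both $m^-(x)$ and $\mu$; it is not ``counted''. If it were, the nullity at $T/n$ would wrongly enter the formula; that nullity is $2$ in the paper's applications, cf.\ Lemma~\ref{lem:nullity-full-period}.

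Your analysis at $t=0$ is also wrong, in two places whose errors happen to cancel.
\begin{enumerate}
\item Let $V=\ker(\overline{\omega}S-I_n)$. The limiting form $\int_0^1\langle P\eta',\eta'\rangle$ has kernel the constants $c\in V$. For small $s>0$ these directions are decided by the next term $s^2\langle R(0)c,c\rangle$. Hence the small-interval index is $m^-(R(0)|_V)$, where $R(0)|_V$ is the compression of $R(0)$ to $V$; it is not $0$.
\item Correspondingly, the initial crossing form on $V\oplus V$ is $\langle P^{-1}y,y\rangle-\langle R(0)x,x\rangle$. Its $x$-part is $-R(0)|_V$, not zero.
\end{enumerate}
The harmonic oscillator $L=\frac12\dot x^2-\frac12x^2$ on $[0,2\pi]$ with $S=\omega=1$ exhibits both errors. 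Here $m^-(x)=1$ (the constants) and there are no interior crossings, whereas your Step~2 gives $0$. The initial crossing form is $y^2+x^2$, so $t=0$ contributes $2$ to $\mu$, not $\nu_\omega(S)=1$.

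The correct matching, when $R(0)|_V$ is nondegenerate, is
\[
m^+(\Gamma(0))=\nu_\omega(S)+m^-(R(0)|_V)=\nu_\omega(S)+m^-_{[0,s_0]}.
\]
When $R(0)|_V$ is degenerate, the initial crossing is not regular and needs a separate perturbation or homotopy argument, which your proposal does not supply.
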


\section{Numerical computation of the Morse index}\label{app:numerical-index}
We compute the Morse index of a prescribed collision-free double--double orbit by discretizing the second variation of the Lagrangian action on a space of variations satisfying a symmetry-induced graph boundary condition. The boundary constraint is imposed by eliminating dependent endpoint variables. The resulting symmetric matrix is then analyzed through a generalized eigenvalue problem.

\paragraph{Lagrangian formulation and graph boundary conditions.}
In the parallelogram configuration, the four positions are
$(q_1,q_2,-q_2,-q_1)$, with corresponding masses $(1,m,m,1)$.
We use the independent configuration variable
$q=(q_1^{\top},q_2^{\top})^{\top}\in\mathbb{R}^4$ and define
\begin{equation}
P=2\operatorname{diag}(1,1,m,m).
\label{eq:B-1}
\end{equation}
The Lagrangian and Hamiltonian are
\begin{equation}
L(q,\dot q)=\frac12\dot q^{\top}P\dot q+U(q),
\qquad
\mathcal{H}(q,p)=\frac12p^{\top}P^{-1}p-U(q),
\label{eq:dd-lagrangian}
\end{equation}
where $p=P\dot q$ is the canonical momentum and
\begin{equation}
U(q)=
\frac{1}{2\lVert q_1\rVert}
+\frac{m^2}{2\lVert q_2\rVert}
+\frac{2m}{\lVert q_1+q_2\rVert}
+\frac{2m}{\lVert q_1-q_2\rVert}.
\label{eq:dd-potential}
\end{equation}
Consequently, the equations used to reconstruct the orbit are
\begin{equation}
\dot q=P^{-1}p,
\qquad
\dot p=\nabla U(q).
\label{eq:dd-hamiltonian-system}
\end{equation}

Let $\theta=\pi/n$, where $n\geq2$ is an integer, and set
\begin{equation}
R(\alpha)=
\begin{pmatrix}
\cos\alpha&-\sin\alpha\\
\sin\alpha& \cos\alpha
\end{pmatrix},
\qquad
A_T=\operatorname{diag}\bigl(R(T\theta),R(T\theta)\bigr).
\label{eq:dd-boundary-matrix}
\end{equation}
The double--double symmetry described in \cite{Peng2015} gives
$q(t+2)=A_2q(t)$ and $p(t+2)=A_2p(t)$.
Thus, for $T=2k$ with $k$ a positive integer, the orbit satisfies
\begin{equation}
q(T)=A_Tq(0),
\qquad
p(T)=A_Tp(0).
\label{eq:dd-orbit-boundary}
\end{equation}

For each such $T$, consider the action
\begin{equation}
\mathcal{A}_T(q)=\int_0^T L(q,\dot q)\,\mathrm{d}t
\label{eq:B-7}
\end{equation}
and the variation space
\begin{equation}
E_T=
\left\{
\xi\in H^1([0,T],\mathbb{R}^4):
\xi(T)=A_T\xi(0)
\right\}.
\label{eq:dd-variation-space}
\end{equation}
This is the graph boundary condition
$\Gr(A_T)$ in configuration space.
In the notation of \cite{HS2009}, it corresponds to
$\Gr(\bar S^{\top})$ with $\bar S=A_T^{\top}$.

Only the position constraint is imposed on the $H^1$ variation space.
Indeed, the boundary term in the first variation is
\begin{equation}
p(T)^{\top}\xi(T)-p(0)^{\top}\xi(0)
=
\bigl(A_T^{\top}p(T)-p(0)\bigr)^{\top}\xi(0).
\label{eq:B-9}
\end{equation}
Since $A_T$ is orthogonal, its vanishing yields the natural momentum
condition in \eqref{eq:dd-orbit-boundary}.

The principal intervals are $[0,2]$, $[0,4]$, and $[0,2n]$.
For even $n$, we also consider $[0,n]$, for which $A_n=-I_4$
and the boundary condition is antiperiodic.
At the full period, $A_{2n}=I_4$, so the boundary condition is periodic.
Each interval is treated separately: no additional endpoint-type
constraint is imposed at its interior nodes.
All indices are computed within the parallelogram configuration space,
without fixing angular momentum or quotienting out overall rotations.

\paragraph{Reconstruction and verification of the reference orbit.}

The initial state is parameterized by
\begin{equation}
Y_0(a,b,c,d)=
(a,0,b,0,0,c,0,d)^{\top},
\qquad a>b>0,
\label{eq:B-10}
\end{equation}
in the coordinate ordering $Y=(q,p)$.
Starting from a supplied initial guess, the parameters are refined by
a nonlinear least-squares shooting procedure.
Writing
\[
e_{\theta}=(\cos\theta,\sin\theta)^{\top},
\qquad
e_{\theta}^{\perp}=(-\sin\theta,\cos\theta)^{\top},
\]
the shooting conditions at $t=1$ are
\begin{equation}
(e_{\theta}^{\perp})^{\top}q_i(1)=0,
\qquad
e_{\theta}^{\top}p_i(1)=0,
\qquad i=1,2.
\label{eq:dd-shooting}
\end{equation}
The position and momentum residuals are normalized separately by
$\max\{1,\lVert q(1)\rVert_{\infty}\}$ and
$\max\{1,\lVert p(1)\rVert_{\infty}\}$, respectively.
The refined initial condition is accepted only if the infinity norm
of the normalized shooting residual does not exceed $10^{-9}$.

The orbit is integrated using MATLAB's \texttt{ode113}, with relative
and absolute tolerances $2\times10^{-13}$ and $2\times10^{-14}$.
During the Hessian computation, the maximum integration step is
$\min\{10^{-3},h\}$, where $h$ is the finite-element mesh size.
For every interval, we additionally evaluate
\begin{equation}
r_q(T)=
\frac{\lVert q(T)-A_Tq(0)\rVert_{\infty}}
{\max\{1,\lVert q(0)\rVert_{\infty},\lVert q(T)\rVert_{\infty}\}},
\qquad
r_p(T)=
\frac{\lVert p(T)-A_Tp(0)\rVert_{\infty}}
{\max\{1,\lVert p(0)\rVert_{\infty},\lVert p(T)\rVert_{\infty}\}}.
\label{eq:dd-boundary-residuals}
\end{equation}
The index calculation is performed only when
$\max\{r_q(T),r_p(T)\}\leq10^{-7}$.

\paragraph{Second variation and the continuous Morse index.}

Along the reference orbit, let
\begin{equation}
B(t)=D^2U(q(t)).
\label{eq:B-13}
\end{equation}
The second variation restricted to $E_T$ is
\begin{equation}
\mathcal{Q}_T(\xi,\eta)
=
\int_0^T
\left[
\dot\xi^{\top}P\dot\eta
+\xi^{\top}B(t)\eta
\right]\,\mathrm{d}t,
\qquad \xi,\eta\in E_T.
\label{eq:dd-index-form}
\end{equation}
The positive sign of the potential term follows from the convention
$L=K+U$ in \eqref{eq:dd-lagrangian}.

The Morse index is defined by
\begin{equation}
m_T^-=
\sup\left\{
\dim W:
W\subset E_T,\
\mathcal{Q}_T(\xi,\xi)<0
\text{ for every }0\neq\xi\in W
\right\}.
\label{eq:dd-morse-definition}
\end{equation}
Equivalently, it is the number of negative eigenvalues, counted with
multiplicity, of the self-adjoint operator
\begin{equation}
\mathcal{L}_T\xi=-P\ddot\xi+B(t)\xi
\label{eq:B-16}
\end{equation}
with domain
\begin{equation}
D(\mathcal{L}_T)=
\left\{
\xi\in H^2([0,T],\mathbb{R}^4):
\xi(T)=A_T\xi(0),\
P\dot\xi(T)=A_TP\dot\xi(0)
\right\}.
\label{eq:dd-operator-domain}
\end{equation}
The derivative condition is a natural operator boundary condition;
it is not imposed as an additional constraint on the finite-element
trial functions.

The potential Hessian is evaluated analytically.
For $x\neq0$, define
\begin{equation}
D(x)=D^2\bigl(\lVert x\rVert^{-1}\bigr)
=
-\frac{I_2}{\lVert x\rVert^3}
+\frac{3xx^{\top}}{\lVert x\rVert^5}.
\label{eq:B-18}
\end{equation}
With
\[
B_1=\frac12D(q_1),\qquad
B_2=\frac{m^2}{2}D(q_2),\qquad
B_{\pm}=2mD(q_1\pm q_2),
\]
we obtain
\begin{equation}
B(t)=
\begin{pmatrix}
B_1+B_++B_- & B_+-B_-\\
B_+-B_- & B_2+B_++B_-
\end{pmatrix}.
\label{eq:dd-potential-hessian}
\end{equation}

\paragraph{Finite-element discretization.}

Partition $[0,T]$ uniformly into $N$ elements, with
$t_j=jh$ and $h=T/N$.
If $r$ denotes the number of elements per time interval of length
$1/2$, then
\begin{equation}
N=2Tr,\qquad h=\frac{1}{2r}.
\label{eq:B-20}
\end{equation}
The current implementation uses
$r\in\{30,50,100,150,200\}$ to examine mesh dependence.

We approximate variations by continuous piecewise linear functions.
On the element $[t_j,t_{j+1}]$, write
\begin{equation}
\xi_h(t_j+hs)=
\Phi(s)
\begin{pmatrix}
\xi_j\\ \xi_{j+1}
\end{pmatrix},
\qquad
\Phi(s)=\begin{pmatrix}(1-s)I_4&sI_4\end{pmatrix},
\qquad 0\leq s\leq1.
\label{eq:B-21}
\end{equation}
The kinetic contribution is integrated exactly, whereas the potential
contribution is approximated by three-point Gauss--Legendre quadrature:
\begin{equation}
(s_1,s_2,s_3)
=
\left(
\frac12-\frac{\sqrt{15}}{10},
\frac12,
\frac12+\frac{\sqrt{15}}{10}
\right),
\qquad
(w_1,w_2,w_3)=
\left(\frac5{18},\frac49,\frac5{18}\right).
\label{eq:B-22}
\end{equation}
Let $\widetilde q(t)$ denote the continuous numerical approximation
provided by the ODE solver, and set
$B_{j,g}=D^2U(\widetilde q(t_j+hs_g))$.
The element Hessian is
\begin{equation}
H_j=
\frac1h
\begin{pmatrix}
P&-P\\
-P&P
\end{pmatrix}
+
h\sum_{g=1}^{3}
w_g\,\Phi(s_g)^{\top}B_{j,g}\Phi(s_g).
\label{eq:dd-element-hessian}
\end{equation}
The coefficients $B_{j,g}$ are evaluated using \texttt{deval}
on the ODE solution. Thus, piecewise linear interpolation is used
for the variations, while the reference trajectory is represented
by the ODE solver's continuous output.

Assembly of \eqref{eq:dd-element-hessian} over all elements gives
a sparse symmetric matrix
$H_{\mathrm{full}}\in\mathbb{R}^{4(N+1)\times4(N+1)}$.
The corresponding $L^2$ Gram matrix is assembled from
\begin{equation}
G_j=\frac{h}{6}
\begin{pmatrix}
2I_4&I_4\\
I_4&2I_4
\end{pmatrix}.
\label{eq:dd-element-gram}
\end{equation}
This Gram matrix represents the $L^2$ inner product and is distinct
from the physical mass matrix $P$.

\paragraph{Elimination of the graph boundary constraint.}

The endpoint relation $\xi_N=A_T\xi_0$ is imposed algebraically.
For the independent coefficient vector
\[
v=(v_0^{\top},\ldots,v_{N-1}^{\top})^{\top}\in\mathbb{R}^{4N},
\]
define the injective map
$C_T:\mathbb{R}^{4N}\to\mathbb{R}^{4(N+1)}$ by
\begin{equation}
C_Tv=
\begin{pmatrix}
v_0\\
v_1\\
\vdots\\
v_{N-1}\\
A_Tv_0
\end{pmatrix}.
\label{eq:B-25}
\end{equation}
The constrained Hessian and Gram matrices are
\begin{equation}
H_{T,h}=C_T^{\top}H_{\mathrm{full}}C_T,
\qquad
G_{T,h}=C_T^{\top}G_{\mathrm{full}}C_T.
\label{eq:dd-constrained-matrices}
\end{equation}
This restriction preserves the second variation on the prescribed
finite-element space and leaves $4N$ independent degrees of freedom.
In particular, the initial endpoint remains variable and is coupled
to the terminal endpoint through $A_T$.

\paragraph{Generalized eigenvalues and index counting.}

Since $G_{T,h}$ is positive definite, we solve
\begin{equation}
H_{T,h}v=\lambda G_{T,h}v.
\label{eq:dd-generalized-eigenproblem}
\end{equation}
In exact arithmetic, the number of negative generalized eigenvalues
equals the negative inertia of $H_{T,h}$, because
\begin{equation}
n_-(H_{T,h})
=
n_-\!\left(
G_{T,h}^{-1/2}H_{T,h}G_{T,h}^{-1/2}
\right).
\label{eq:B-28}
\end{equation}
The use of the $L^2$ Gram matrix also makes the computed low-lying
eigenvalues suitable for comparison across different meshes.

For matrices of dimension at most $1200$, the full generalized
spectrum is computed. Larger problems are treated using a sparse
shift-and-invert eigensolver. To ensure that the computation targets
the bottom of the spectrum, define
\begin{equation}
\beta_h=\min_{j,g}\lambda_{\min}(B_{j,g}).
\label{eq:B-29}
\end{equation}
The quadrature weights are positive, and the quadrature is exact
for products of the linear shape functions. Since the kinetic
contribution is nonnegative, the assembled matrices satisfy
\begin{equation}
H_{T,h}\succeq\beta_hG_{T,h}
\label{eq:dd-spectral-lower-bound}
\end{equation}
in exact arithmetic.

The initial spectral shift is chosen as
\begin{equation}
\sigma=
\min\{0,\beta_h\}
-\max\{1,0.1|\beta_h|\},
\label{eq:B-31}
\end{equation}
and is moved further to the left if necessary.
Positive definiteness of $H_{T,h}-\sigma G_{T,h}$ is checked numerically
by Cholesky factorization.
For a shift below the spectrum, the eigenvalues of
\[
(H_{T,h}-\sigma G_{T,h})^{-1}G_{T,h}
\]
are $(\lambda-\sigma)^{-1}$, so its dominant eigenvalues correspond
to the smallest algebraic generalized eigenvalues in
\eqref{eq:dd-generalized-eigenproblem}.

The sparse computation begins with $32$ eigenvalues and increases
the requested number until the largest returned eigenvalue exceeds
the classification threshold. This provides a numerical check that
the negative spectrum and the threshold-defined near-zero cluster
have been covered. The implementation permits up to $512$ requested
eigenvalues and rejects a computation if convergence or spectral
coverage is not achieved.
The eigensolver tolerance is $10^{-11}$.
For each returned sparse eigenpair, the additional residual test is
\begin{equation}
\frac{
\lVert H_{T,h}v-\lambda G_{T,h}v\rVert_2
}{
\bigl(\lVert H_{T,h}\rVert_1
+|\lambda|\lVert G_{T,h}\rVert_1\bigr)\lVert v\rVert_2
}
\leq10^{-8}.
\label{eq:dd-eigenpair-residual}
\end{equation}

Given a classification threshold $\varepsilon>0$, the reported
numerical index and near-zero count are
\begin{equation}
m_{T,h,\varepsilon}^-=
\#\{\lambda_j<-\varepsilon\},
\qquad
\nu_{T,h,\varepsilon}=
\#\{|\lambda_j|\leq\varepsilon\},
\label{eq:dd-numerical-index}
\end{equation}
with multiplicities included.
The active threshold in the present implementation is
$\varepsilon=10^{-10}$.

For example, for the prescribed parameters \(n=4\),
the initial condition of the retrograde double-double orbit is first corrected
by a shooting method. The resulting shooting residual is
\[
\lVert R_{\mathrm{shoot}}\rVert_\infty
=
2.7754\times10^{-15},
\]
and the closure error over one full period is
\[
\lVert Y(2n)-Y(0)\rVert_\infty
=
5.1356\times10^{-12}.
\]

We then compute the Morse index on the interval
\([0,2]\),\([0,4]\), \([0,8]\) within the parallelogram-reduced configuration space.
On each time interval of length \(0.5\), we use
\[
r=30,\ 50,\ 100,\ 150,\ 200
\]
finite elements, respectively. The resulting discrete Morse indexs
are listed in Table~\ref{tab:graph_morse_n4}.

\Needspace{28\baselineskip}
\begingroup
\footnotesize
\setlength{\tabcolsep}{4pt}
\renewcommand{\arraystretch}{1.12}
\begin{longtable}{@{}c@{\hspace{8pt}}rrrrrS[table-format=1.6e-2]S[table-format=1.6e-2]@{}}
\caption{Mesh-refinement results for the graph-boundary Morse index
of the retrograde double-double orbit with $m=1$ and $n=4$.}
\label{tab:graph_morse_n4}
\\
\toprule
Interval &
\(T\) &
\(N/\mathrm{half}\) &
GraphIndex &
NearZero &
Dimension &
{\(Q\)-Residual} &
{\(P\)-Residual}
\\
\midrule
\endfirsthead

\toprule
Interval &
\(T\) &
\(N/\mathrm{half}\) &
GraphIndex &
NearZero &
Dimension &
{\(Q\)-Residual} &
{\(P\)-Residual}
\\
\midrule
\endhead

\midrule
\multicolumn{8}{r}{\textit{Continued on next page}}\\
\endfoot

\bottomrule
\endlastfoot

\([0,2]\)  & 2 & 30  & 0 & 0 & 480   & 1.385594e-13 & 1.763152e-13\\
\([0,2]\)  & 2 & 50  & 0 & 0 & 800   & 1.385594e-13 & 1.763152e-13\\
\([0,2]\)  & 2 & 100 & 0 & 0 & 1600  & 1.385594e-13 & 1.763152e-13\\
\([0,2]\)  & 2 & 150 & 0 & 0 & 2400  & 1.385594e-13 & 1.763152e-13\\
\([0,2]\)  & 2 & 200 & 0 & 0 & 3200  & 1.385594e-13 & 1.763152e-13\\

\addlinespace

\([0,4]\)  & 4 & 30  & 2 & 0 & 960   & 4.402819e-13 & 5.468232e-13\\
\([0,4]\)  & 4 & 50  & 2 & 0 & 1600  & 4.402819e-13 & 5.468232e-13\\
\([0,4]\)  & 4 & 100 & 2 & 0 & 3200  & 4.402819e-13 & 5.468232e-13\\
\([0,4]\)  & 4 & 150 & 2 & 0 & 4800  & 4.402819e-13 & 5.468232e-13\\
\([0,4]\)  & 4 & 200 & 2 & 0 & 6400  & 4.402819e-13 & 5.468232e-13\\

\addlinespace

\([0,n]\)  & 4 & 30  & 2 & 0 & 960   & 4.402819e-13 & 5.468232e-13\\
\([0,n]\)  & 4 & 50  & 2 & 0 & 1600  & 4.402819e-13 & 5.468232e-13\\
\([0,n]\)  & 4 & 100 & 2 & 0 & 3200  & 4.402819e-13 & 5.468232e-13\\
\([0,n]\)  & 4 & 150 & 2 & 0 & 4800  & 4.402819e-13 & 5.468232e-13\\
\([0,n]\)  & 4 & 200 & 2 & 0 & 6400  & 4.402819e-13 & 5.468232e-13\\

\addlinespace

\([0,2n]\) & 8 & 30  & 4 & 0 & 1920  & 1.269667e-12 & 1.571970e-12\\
\([0,2n]\) & 8 & 50  & 4 & 0 & 3200  & 1.269667e-12 & 1.571970e-12\\
\([0,2n]\) & 8 & 100 & 4 & 0 & 6400  & 1.269667e-12 & 1.571970e-12\\
\([0,2n]\) & 8 & 150 & 4 & 0 & 9600  & 1.269667e-12 & 1.571970e-12\\
\([0,2n]\) & 8 & 200 & 4 & 0 & 12800 & 1.269667e-12 & 1.571970e-12\\

\end{longtable}
\endgroup

We further examine the convergence of the low-lying
generalized eigenvalues of the discrete Hessian for the
retrograde double-double orbit with $m=1$ and $n=4$.

We consider the intervals
\(
[0,2],[0,4],[0,8],
\)
equipped with the graph boundary conditions
\[
\eta(T)=A_T\eta(0),
\qquad
A_T=
\operatorname{diag}
\bigl(R(T\theta),R(T\theta)\bigr),
\qquad
\theta=\frac{\pi}{4}.
\]

In particular,
\[
A_2=\operatorname{diag}
\left(R\left(\frac{\pi}{2}\right),
R\left(\frac{\pi}{2}\right)\right),
\qquad
A_4=-I_4,
\qquad
A_8=I_4.
\]

\paragraph{Symmetry-induced null modes.}

Before presenting the numerical results, we recall
that the continuous second variation possesses
null directions arising from the symmetries
of the Newtonian system.

Let
\[
\mathcal M=\operatorname{diag}(1,1,1,1)
\]
be the mass matrix and let $q(t)$ denote the
retrograde double-double orbit.

The self-adjoint operator associated with the
second variation is
\[
\mathcal L\eta
=
-2\mathcal M\ddot{\eta}
+
D^2U(q(t))\eta.
\]

\begin{lemma}\label{lem:nullity-full-period}
Let $q(t)$ be a retrograde double-double periodic orbit
with period $T=2n$, and let $\gamma(t)\in\Sp(8)$ be the
fundamental solution of the corresponding linearized
Hamiltonian system, satisfying $\gamma(0)=I_8$.

From \eqref{2} and \textit{nondegenerate} assumption, we know the monodromy matrix $\gamma(T)$ and $A_T^{\top}\gamma(\frac{T}{n})$ , $(A_T^{\top})^2\gamma(\frac{2T}{n})$ are
symplectically similar to
\[
N_1(1,1)\diamond N_1(1,b)\diamond M_2,
\]
where $b=1$, $M_2\in\Sp(4)$, and
\[
1\notin\sigma(M_2).
\]
Then the nullity of the second variation of the
Lagrangian action functional under the graph
boundary condition is exactly two:
\[
\nu_{t}=\dim\ker Q_{t}=2, \qquad t=T,\frac{2T}{n},\frac{T}{n}.
\]
\end{lemma}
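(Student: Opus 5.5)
The plan is to identify the kernel of the second variation $Q_t$ under the graph boundary condition with the space of solutions of the linearized Newton equation that satisfy the same boundary condition. Then count these solutions through the monodromy-type matrix. Concretely, an element $\eta\in\ker Q_t$ is, by standard elliptic regularity, a $C^2$ solution of $\mathcal L\eta=0$ satisfying $\eta(t)=A_t\eta(0)$. It also satisfies the natural momentum condition $P\dot\eta(t)=A_tP\dot\eta(0)$, obtained from the vanishing of the boundary term as in \eqref{eq:B-9}, using that $A_t$ is orthogonal. Writing $w(s)=(P\dot\eta(s),\eta(s))$, $w$ solves the linearized Hamiltonian system, so $w(s)=\gamma(s)w(0)$. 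The two boundary conditions together say $\gamma(t)w(0)=(A_t\diamond A_t)w(0)$, i.e. $w(0)\in\ker\bigl((A_t^{\top})_d\gamma(t)-I_8\bigr)$. The map $\eta\mapsto w(0)$ is injective, and conversely every such $w(0)$ produces a kernel element. Hence
\[
\nu_t=\dim\ker\bigl((A_t^{\top})_d\,\gamma(t)-I_8\bigr).
\]
This is the real-Lagrangian version of the nullity part of Theorem~\ref{thm:morse_maslov}, with $\omega=1$.

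Next we identify the three matrices. For $t=T/n$ (interval $[0,2]$), $(A_t^{\top})_d\gamma(T/n)$ is exactly $M=\hat S_f^{\top}\gamma(T/n)$. For $t=2T/n$, the $\mathbb Z_n$-invariance $\gamma(s+T/n)=\hat S_f\gamma(s)\hat S_f^{\top}\gamma(T/n)$ gives $(\hat S_f^{\top})^2\gamma(2T/n)=M^2$. For $t=T$, $A_T=I_4$ and the matrix is $\gamma(T)=M^n$. By hypothesis each of these is symplectically similar to $N_1(1,1)\diamond N_1(1,1)\diamond M_2'$ with $1\notin\sigma(M_2')$; here $M_2'$ is $M_2^2$ or $M_2^n$, and we check that $1$ is excluded via nondegeneracy. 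So $\nu_t$ equals the geometric multiplicity of the eigenvalue $1$ of $N_1(1,1)\diamond N_1(1,1)\diamond M_2'$.

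Each block $N_1(1,1)$ has a one-dimensional eigenspace for $1$, and $M_2'$ contributes nothing. Therefore $\dim\ker=1+1+0=2$, and the $\diamond$-decomposition makes the kernels add. For $M^k$, note that $N_1(1,1)^k=N_1(1,k)$, which still has one-dimensional eigenspace, so the count persists.

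The main obstacle is matching the configuration-space setting of Appendix~\ref{app:numerical-index} with the phase space of $\gamma$. The Hessian is computed in the reduced parallelogram coordinates $q\in\mathbb R^4$ with mass matrix $P$, while $\gamma\in\Sp(8)$ lives on $(p,q)$. We will state that we work in these reduced coordinates, where $\gamma$ is $8\times8$, and verify that $A_t$ coincides with $\hat R_f^{t/2}$ there. A second point to justify is that, at $t=T$, the relevant kernel is the eigenvalue-$1$ space of $\gamma(T)$ itself. This requires care about whether nondegeneracy "exactly 4" refers to algebraic multiplicity; the geometric count of $2$ follows from the Jordan structure $N_1(1,1)\diamond N_1(1,1)$ supplied by \eqref{2} with $b=1$.
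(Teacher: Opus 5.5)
Your proposal is correct and follows essentially the same route as the paper. Both identify $\ker Q_t$ with the eigenvalue-$1$ eigenspace of the twisted monodromy matrix (the paper cites Lemma~3.1 of Hu--Sun, where you derive it from the natural boundary condition), and both read off the geometric multiplicity $1+1+0=2$ from the normal form with $b=1$. Your reduction of the cases $t=2T/n$ and $t=T/n$ to $M^2$ and $M$, using $N_1(1,1)^k=N_1(1,k)$ and nondegeneracy to exclude $1$ from $\sigma(M_2^k)$, makes explicit what the paper dismisses as ``similar.''
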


\begin{proof}
We only consider $\gamma(T)$ and the others are similar. Since $q(t)$ is periodic with period $2n$, the
graph boundary condition on the full period
reduces to the periodic boundary condition.

By the correspondence between the nullity of
the Lagrangian second variation and that of
the linearized Hamiltonian boundary-value problem(\cite{HS2009}, Lemma 3.1), 
we have
\begin{equation}\label{eq:nullity-monodromy}
\nu_{2n}
=
\dim\ker Q_{2n}
=
\dim\ker\bigl(\gamma(2n)-I_8\bigr).
\end{equation}
By \eqref{2}, there exists a symplectic matrix
$P\in\Sp(8)$ such that
\[
P^{-1}\gamma(2n)P
=
N_1(1,1)\diamond N_1(1,b)\diamond M_2.
\]
Recall that
\[
N_1(1,b)
=
\begin{pmatrix}
1&b\\
0&1
\end{pmatrix}.
\]
Since $b=1$, both Jordan blocks
$N_1(1,1)$ have a one-dimensional
eigenspace corresponding to the eigenvalue $1$.
More precisely,
\[
\dim\ker\bigl(N_1(1,1)-I_2\bigr)=1.
\]
Moreover, since $1\notin\sigma(M_2)$,
\[
\dim\ker(M_2-I_4)=0.
\]

Consequently, the geometric multiplicity of
the eigenvalue $1$ of $\gamma(2n)$ is
\[
\begin{aligned}
\dim\ker\bigl(\gamma(2n)-I_8\bigr)
&=
\dim\ker\bigl(N_1(1,1)-I_2\bigr)\\
&\quad+
\dim\ker\bigl(N_1(1,1)-I_2\bigr)\\
&\quad+
\dim\ker(M_2-I_4)\\
&=1+1+0\\
&=2.
\end{aligned}
\]
Combining this with
\eqref{eq:nullity-monodromy}, we conclude that
\[
\nu_{2n}=\dim\ker Q_{2n}=2.
\]
\end{proof}
The preceding lemma establishes that the continuous
Hessian possesses exactly two zero eigenvalues.
 In the following, we investigate how these zero eigenvalues
are approximated by the finite-element discretization.
 Let
\[
H_hv=\lambda G_hv
\]
be the generalized eigenvalue problem associated
with the discrete second variation.
 As shown in Table~\ref{tab:n4-zero-modes},
two generalized eigenvalue branches converge
toward zero as the mesh is refined. 
Furthermore, the scaled quantities
\[
N_{\mathrm{half}}^2\lambda_{1,h},
\qquad
N_{\mathrm{half}}^2\lambda_{2,h}
\]
remain approximately constant, indicating
quadratic convergence with respect to the
mesh size $h$. 
This behavior is consistent with the
two-dimensional kernel of the continuous
Hessian established above.

\paragraph{Convergence of the two near-zero eigenvalues.}

Let
\[
H_hv=\lambda G_hv
\]
be the generalized eigenvalue problem
associated with the finite-element
discretization, where $H_h$ is the discrete
Hessian and $G_h$ is the $L^2$ Gram matrix.

The mesh size is
\[
h=\frac{1}{2N_{\mathrm{half}}},
\]
where $N_{\mathrm{half}}$ denotes the number
of finite elements per time interval of
length $0.5$.

We first consider the interval $[0,2]$.
The two smallest generalized eigenvalues,
denoted by $\lambda_{1,h}$ and
$\lambda_{2,h}$, are reported in
Table~\ref{tab:n4-zero-modes}.
To examine their convergence rates, we
also report the scaled quantities
\[
N_{\mathrm{half}}^2\lambda_{1,h},
\qquad
N_{\mathrm{half}}^2\lambda_{2,h}.
\]

\begin{table}[htbp]
\centering
\small
\caption{
Mesh convergence of the two smallest
generalized eigenvalues on $[0,2]$
for the retrograde double-double orbit
with $m=1$ and $n=4$.
}
\label{tab:n4-zero-modes}

\begin{tabular}{ccccc}
\toprule
$N_{\mathrm{half}}$
&
$\lambda_{1,h}$
&
$N_{\mathrm{half}}^2\lambda_{1,h}$
&
$\lambda_{2,h}$
&
$N_{\mathrm{half}}^2\lambda_{2,h}$
\\
\midrule

30
&
\num{2.6778089e-5}
&
\num{0.02410028}
&
\num{1.4660459e-3}
&
\num{1.31944127}
\\

50
&
\num{9.6411549e-6}
&
\num{0.02410289}
&
\num{5.2776648e-4}
&
\num{1.31941619}
\\

100
&
\num{2.4104134e-6}
&
\num{0.02410413}
&
\num{1.3194058e-4}
&
\num{1.31940581}
\\

150
&
\num{1.0713414e-6}
&
\num{0.02410518}
&
\num{5.8640208e-5}
&
\num{1.31940469}
\\

200
&
\num{6.0261218e-7}
&
\num{0.02410449}
&
\num{3.2985081e-5}
&
\num{1.31940324}
\\

\bottomrule
\end{tabular}
\end{table}

As shown in Table~\ref{tab:n4-zero-modes},
both eigenvalues decrease toward zero
under mesh refinement.
More importantly, the scaled quantities
$N_{\mathrm{half}}^2\lambda_{1,h}$
and
$N_{\mathrm{half}}^2\lambda_{2,h}$
remain nearly constant.
Numerically, we observe
\[
N_{\mathrm{half}}^2\lambda_{1,h}
\approx 0.024104,
\]
and
\[
N_{\mathrm{half}}^2\lambda_{2,h}
\approx 1.319403.
\]

Since
\[
h=\frac{1}{2N_{\mathrm{half}}},
\]
these observations are consistent with
the asymptotic relations
\[
\lambda_{1,h}\approx 0.09642h^2,
\qquad
\lambda_{2,h}\approx 5.27761h^2.
\]
In particular, both branches exhibit
approximately quadratic convergence
toward zero:
\[
\lambda_{1,h}=O(h^2),
\qquad
\lambda_{2,h}=O(h^2).
\]

An empirical convergence order can also
be estimated by
\[
p_j=
\frac{
\log\bigl(
\lambda_{j,h_{30}}/
\lambda_{j,h_{200}}
\bigr)
}{
\log(h_{30}/h_{200})
},
\qquad j=1,2.
\]
Using the numerical values in
Table~\ref{tab:n4-zero-modes}, we obtain
\[
p_1\approx1.9999,
\qquad
p_2\approx2.0000.
\]

These results provide strong numerical
evidence that the two small generalized
eigenvalues approximate the two
symmetry-induced null directions
of the continuous Hessian.

The same two near-zero eigenvalue branches
are also observed on $[0,4]$ and $[0,8]$,
with almost identical numerical values
at the same mesh resolution.

This agreement is consistent with the
rotational time-shift symmetry of the orbit.

\paragraph{Convergence of the nonzero spectrum.}

To distinguish the near-zero eigenvalues
from the genuinely nonzero spectrum,
we further examine the neighboring
negative and positive eigenvalues.

Let
\[
\lambda_{1,h}^{(T)}
\leq
\lambda_{2,h}^{(T)}
\leq\cdots
\]
denote the generalized eigenvalues
on the interval $[0,T]$,
arranged in nondecreasing order.

The numerical results show the following
spectral structures:

\[
\begin{array}{c|c|c}
\text{Interval}
&
\text{Number of negative eigenvalues}
&
\text{Near-zero branches}
\\
\hline
[0,2] & 0 & \lambda_{1,h}^{(2)},
                \lambda_{2,h}^{(2)}
\\

[0,4] & 2 & \lambda_{3,h}^{(4)},
                \lambda_{4,h}^{(4)}
\\

[0,8] & 4 & \lambda_{5,h}^{(8)},
                \lambda_{6,h}^{(8)}
\end{array}
\]
In the following, Table~\ref{tab:n4-nonzero-spectrum}
presents selected nonzero eigenvalues
adjacent to the near-zero spectral branches.

\begin{table}[htbp]
\centering
\footnotesize

\caption{
Mesh convergence of selected nonzero
generalized eigenvalues on $[0,2]$,
$[0,4]$, and $[0,8]$.
The superscript indicates the interval endpoint.
}
\label{tab:n4-nonzero-spectrum}

\begin{tabular}{cccccc}
\toprule
$N_{\mathrm{half}}$
&
$\lambda_{3,h}^{(2)}$
&
$\lambda_{2,h}^{(4)}$
&
$\lambda_{5,h}^{(4)}$
&
$\lambda_{4,h}^{(8)}$
&
$\lambda_{7,h}^{(8)}$
\\
\midrule

30
&
\num{1.7118263}
&
\num{-0.6048519}
&
\num{1.5557894}
&
\num{-0.3044381}
&
\num{0.02533484}
\\

50
&
\num{1.7100222}
&
\num{-0.6061035}
&
\num{1.5557385}
&
\num{-0.3054526}
&
\num{0.02530230}
\\

100
&
\num{1.7092609}
&
\num{-0.6066315}
&
\num{1.5557171}
&
\num{-0.3058806}
&
\num{0.02528859}
\\

150
&
\num{1.7091199}
&
\num{-0.6067293}
&
\num{1.5557131}
&
\num{-0.3059599}
&
\num{0.02528605}
\\

200
&
\num{1.7090706}
&
\num{-0.6067635}
&
\num{1.5557117}
&
\num{-0.3059876}
&
\num{0.02528516}
\\

\bottomrule
\end{tabular}
\end{table}

On $[0,2]$, the first two eigenvalues
approach zero, whereas the third
eigenvalue converges toward a strictly
positive value, approximately $1.709$.

On $[0,4]$, the first two eigenvalues
remain negative and converge toward
approximately $-0.607$.
They are followed by two near-zero
eigenvalue branches, while the fifth
eigenvalue approaches approximately
$1.556$.

Similarly, on the full period $[0,8]$,
the first four eigenvalues remain
strictly negative.
The fifth and sixth eigenvalues
approach zero, whereas the seventh
eigenvalue converges toward approximately
$0.0253$.

These observations indicate a clear
numerical separation between the
symmetry-induced near-zero eigenvalues
and the remaining nonzero spectrum.

In particular, the negative eigenvalue
counts remain unchanged throughout
the mesh-refinement study:
\[
m_h^-([0,2])=0,
\qquad
m_h^-([0,4])=2,
\qquad
m_h^-([0,8])=4.
\]

\paragraph{Detailed low-lying spectrum.}

For completeness, Table~\ref{tab:n4-low-spectrum}
lists the twelve smallest generalized
eigenvalues obtained on each of the
three intervals using the finest mesh,
$N_{\mathrm{half}}=200$.

\begin{table}[htbp]
\centering
\small

\caption{
The twelve smallest generalized eigenvalues
on $[0,2]$, $[0,4]$, and $[0,8]$
for the retrograde double-double orbit
with $m=1$, $n=4$, and
$N_{\mathrm{half}}=200$.
}
\label{tab:n4-low-spectrum}

\begin{tabular}{cccc}
\toprule
$j$
&
$\lambda_{j,h}^{(2)}$
&
$\lambda_{j,h}^{(4)}$
&
$\lambda_{j,h}^{(8)}$
\\
\midrule

1
&
\num{6.0261218e-7}
&
\num{-6.0676351e-1}
&
\num{-6.0676351e-1}
\\

2
&
\num{3.2985081e-5}
&
\num{-6.0676351e-1}
&
\num{-6.0676351e-1}
\\

3
&
\num{1.7090706}
&
\num{6.0260843e-7}
&
\num{-3.0598763e-1}
\\

4
&
\num{3.6607417}
&
\num{3.2985079e-5}
&
\num{-3.0598763e-1}
\\

5
&
\num{3.7855613}
&
\num{1.5557117}
&
\num{6.0261136e-7}
\\

6
&
\num{1.1566441e1}
&
\num{1.5557117}
&
\num{3.2985083e-5}
\\

7
&
\num{1.1640056e1}
&
\num{1.7090706}
&
\num{2.5285160e-2}
\\

8
&
\num{3.1151867e1}
&
\num{3.6607417}
&
\num{2.5285160e-2}
\\

9
&
\num{3.2322563e1}
&
\num{3.7855613}
&
\num{8.6106367e-2}
\\

10
&
\num{3.3448918e1}
&
\num{1.1566441e1}
&
\num{8.6106367e-2}
\\

11
&
\num{3.3987556e1}
&
\num{1.1640056e1}
&
\num{1.5557117}
\\

12
&
\num{3.6005124e1}
&
\num{1.1784553e1}
&
\num{1.5557117}
\\

\bottomrule
\end{tabular}
\end{table}

The eigenvalue distributions in
Table~\ref{tab:n4-low-spectrum}
are consistent with the existence
of two symmetry-induced null modes
in each graph-boundary problem.
The two near-zero eigenvalues appear
at different positions in the ordered
spectra because the numbers of
negative eigenvalues differ among
the three intervals.
More precisely, they correspond to
the first and second eigenvalues
on $[0,2]$, the third and fourth
on $[0,4]$, and the fifth and sixth
on $[0,8]$.
We emphasize that the finite-element
eigenvalues associated with the
continuous null modes need not be
exactly zero on a finite mesh.
Their observed quadratic decay is
consistent with the expected
convergence behavior of the
piecewise-linear finite-element
approximation.

\paragraph{Conclusion.}

The numerical results exhibit three
consistent features:

\begin{enumerate}
\item
Two generalized eigenvalue branches
converge toward zero at an approximately
quadratic rate under mesh refinement.

\item
The remaining neighboring negative
and positive eigenvalues converge
toward values separated from zero.

\item
The negative eigenvalue counts remain
constant across all five mesh resolutions.
\end{enumerate}

Together with the analytically established
symmetry-induced null directions,
these results provide strong numerical
evidence for the reliability of the
computed graph-boundary Morse indices.

For the representative case $m=1$,
$n=4$, the computations consistently give
\[
m^-([0,2])=0,\qquad
m^-([0,4])=2,\qquad
m^-([0,8])=4
\]
as the numerically inferred Morse indices.

\hfill\newline
\noindent{\bf Acknowledgement.}
The author is extremely grateful to Professor Yuwei Ou who introduced him to the stability problem in celestial mechanics and thanks him for his precious help and helpful discussions. The author sincerely thanks Professor Xijun Hu for his suggestions to this paper. The author also thanks Professor Duokui Yan for some numerical data and valuable comments he provided during the course of this research.
The author is partially supported by NSFC (No.12371192).

\end{document}